\documentclass[11pt,a4paper]{article}

\usepackage[a4paper,hcentering,vcentering]{geometry}
\geometry{hscale=0.7, vscale=0.8}
\usepackage[utf8]{inputenc}
\usepackage{nameref}
\usepackage[right]{lineno}
\usepackage{graphicx}
\usepackage{tikz-cd}

\usepackage{amsthm,amsmath,amsfonts,mathtools}
\newtheorem{theorem}{Theorem}[section]
\newtheorem{Pro}{Proposition}[section]

\newtheorem{Lem}{Lemma}[section]
\newtheorem{Col}{Corollary}[section]
\newtheorem{MainThm}{Theorem}

\newtheorem{Def}{Definition}[section]
\theoremstyle{definition}

\newtheorem{Exp}{Exemple}

\theoremstyle{remark}
\newtheorem{Rmk}{Remark}

\newcommand{\QED}{\hfill \qedsymbol{}}
\newcommand{\x}{\ensuremath{\mathbf{x}}}
\newcommand{\y}{\ensuremath{\mathbf{y}}}
\newcommand{\z}{\ensuremath{\mathbf{z}}}
\newcommand{\Se}{\ensuremath{(\mathbb{S}^2_g)^n_{\epsilon}}}
\newcommand{\Sd}{\ensuremath{(\mathbb{S}^2_g)^n}_0}
\newcommand{\norm}[1]{\lVert {#1} \rVert}

\begin{document}

\title{The $N$-vortex Problem on a Riemann Sphere} 

\author{\small Qun WANG \\  \footnotesize School of Mathematics and Statistics, Henan University, Kaifeng 475000, China \\ \footnotesize Department of Mathematics, University of Toronto, Toronto M5S 2E4, Canada \\ \url{wangqun927@gmail.com}}

\date{}
\maketitle
\noindent
\textbf{Abstract:} 
This article investigates the dynamical behaviours of the $n$-vortex problem with vorticity $\mathbf{\Gamma}$ on a Riemann sphere $\mathbb{S}^2$ equipped with an arbitrary metric $g$. From perspectives of Riemannian geometry and symplectic geometry, we study the invariant orbits and prove that with some constraints on vorticity $\mathbf{\Gamma}$, the $n$-vortex problem possesses finitely many fixed points and infinitely many periodic orbits for generic $g$. Moreover, we verify the contact structure on hyper-surfaces of the vortex dipole, and exclude the existence of perverse symmetric orbits. 

\tableofcontents 
\newpage

\section{Introduction}
\label{Chapter_Introduction}

\paragraph{}The birth of the $n$-vortex problem is marked by the paper \cite{helmholtz1867uber} of Helmholtz in 1858, and has been studied by many mathematicians and physicists since then, for example Kirchhoff \cite{kirchhoff1876} , Poincar\'e \cite{poincare1893theorie}, Arnold \cite{arnold1966geometrie} and so on. The system reveals deep insights on turbulence \cite{chorin2013vorticity} through the co-existence of regular and chaotic behaviours \cite{khanin1982quasi}, and provides a ``classical playground" \cite{aref2007point} for various branches of mathematics as well.

\paragraph{}The vortex system on a closed surface, initially known as a model in geophysics \cite{bogomolov1977dynamics}, has applications in various domains in today's real world: as large as the red spot on Jupiter, as tiny as the thin layers of liquid helium \cite{turner2010vortices}. Meanwhile, from the mathematical perspective, such system displays interesting interactions between dynamics and geometry. The geometric formulation of hydrodynamics traces back to Arnold \cite{arnold1966geometrie}, see also \cite{ebin1970groups} for details. The point $n$-vortex motion as dynamics on a special coadjoint orbit is studied in \cite{MarsdenCoadjoint}, see also the recent review of Boatto and Koiller \cite{boatto2015vortices} and an interpretation of Gustafsson \cite{gustafsson2019vortex} based on connections. The comprehensive book \cite{arnold1999topological} serves as a standard reference to this domain. Concerning the dynamical behaviours, since the work of Kimura \cite{kidambi1998motion} referring to the vortex dipole as geodesics detector, many specific symmetric surfaces (either discrete symmetry or continuous symmetry) have been investigated: the cylinder \cite{montaldi2003vortex}, the hyperbolic plane \cite{montaldi2014point} \cite{hwang2009point}, the toroidal surface \cite{sakajo2016point} and recently the triaxial ellipsoid \cite{rodrigues2018vortex} \cite{koiller2019vortex}, to name but a few. Attention has been given to the stability analysis of vortex rings \cite{dritschel2015motion} \cite{boatto2008curvature} \cite{boatto2019vortex}, reduction of system \cite{borisov2018three}, and orbits of the reduced integrable system \cite{souliere2002periodic}. However orbits on general surfaces without any extra symmetry, to the best knowledge of the author, are considerable less explored. 

\paragraph{}This article tends to make some initial attempts towards the closed invariant orbits of the $n$-vortex motion on a Riemann sphere $\mathbb{S}^2$ equipped with an arbitrary metric $g$ possibly without any non-trivial isometry group. As we will see soon, the $n$-vortex Hamiltonian system on the closed surface depends on both the vorticity vector $\mathbf{\Gamma}$ and the Riemannian metric $g$, via the symplectic form and the Hamiltonian function respectively. In the sequel, we will explore whether a particular choice of metric $g$ (hence the Riemannian structure) or that of vorticity vector $\mathbf{\Gamma}$ (hence the symplectic structure) will be necessary or irrelevant to produce, for instance, abundant fixed points and periodic orbits. 

\subsection{Background and Notations}
\label{Chapter_Introduction_Subsection_Background_and_Notations}

\paragraph{Notation:} We will henceforth use the following notations in the rest of the article:
\begin{itemize}
\item{\makebox[1.5cm]{$\mathbb{S}^2_{g}$}}: the Riemann sphere equipped with a Riemannian metric $g$;
\item{\makebox[1.5cm]{$(\mathbb{S}^2_{g})^n$}}: the product manifold of $n$ copies of $\mathbb{S}^2_g$, i.e., 
\begin{align*}
    (\mathbb{S}^2_{g})^n = \mathbb{S}^2_{g} \times \mathbb{S}^2_{g} \times ...\times \mathbb{S}^2_{g}
\end{align*}
\item{\makebox[1.5cm]{$\omega_g$}}: the Riemannian volume form;
\item{\makebox[1.5cm]{ $V_g(\mathbb{S}^2)$}}: the volume of $\mathbb{S}^2$ under the volume form $\omega_g$, defined as 
\begin{align*}
V_g(\mathbb{S}^2)=\int_{\mathbb{S}^2}\omega_g;
\end{align*}
\item{\makebox[1.5cm]{$\Delta$}}: the Laplace-de Rham operator, defined as $\Delta=\delta d + d\delta$;
\item{\makebox[1.5cm]{$\Delta_g$}}: the Laplace-Beltrami operator\footnotemark[1], defined as $\Delta_g=|g|^{-\frac{1}{2}}\partial_i (g^{ij}\sqrt{|g|}\partial_j)$;
\item{\makebox[1.5cm]{$H_g$}}: the Hamiltonian of the vortex problem on $\mathbb{S}^2_g$;
\item{\makebox[1.5cm]{$\mathcal{N}^n$}}: the collision set, defined as :
\begin{align*}
\mathcal{N}^n= \{ \mathbf{z}\in (\mathbb{S}^2_g)^n, \exists 1\leq i<j \leq n, z_i=z_j  \};
\end{align*}
\item{\makebox[1.5cm]{$\mathcal{N}^n_{\epsilon}$}}: the $\epsilon$-collision set, defined as ($\norm{\cdot}$ is understood as the Euclidean norm\footnotemark[2] in $\mathbb{R}^3$) :
\begin{align*}
\mathcal{N}^n_{\epsilon} = \{ \mathbf{z}\in (\mathbb{S}^2_g)^n, \exists 1\leq i<j \leq n, \norm{z_i-z_j} < \epsilon \};
\end{align*}
\item{\makebox[1.5cm]{ $\Sd$}}: the collision free configurations, defined as $(\mathbb{S}^2_g)^n \setminus \mathcal{N}^n$
\item{\makebox[1.5cm]{ $\Se$}}: the $\epsilon$-collision free configurations, defined as $(\mathbb{S}^2_g)^n \setminus \overline{\mathcal{N}^n_{\epsilon}}$;
\item{\makebox[1.5cm]{$\bar{\Gamma}$}}: the average vorticity, defined as $\bar{\Gamma}=\dfrac{1}{n}\sum_{i=1}^{n}\Gamma_i$.
\item{\makebox[1.5cm]{$U_{c,\epsilon}(H)$}}: A neighbourhood of a regular energy surface $S_c = H^{-1}(c)$, defined as 
\begin{align*}
    U_{c,\epsilon}(H) = \bigcup_{-\epsilon<\delta<\epsilon} H^{-1}(c+\delta) 
\end{align*} 
\end{itemize}

\footnotetext[1]{We have defined the Laplace-Beltrami \textbf{without} the minus sign, thus when acting on scalar functions the Laplace-de Rham operator $\Delta$ and Laplace-Beltrami operator differ by a minus sign: $\Delta$ = - $\Delta_g$. }
\footnotetext[2]{
We will denote by $d_g$ the distance induced on $\mathbb{S}^2_g$ by the Riemannian metric $g$. Note that for $\mathbb{S}^2$, any two metrics $g_1$ and $g_2$ will induce equivalent distance functions $d_{g_1}$ and $d_{g_2}$. $\norm{\cdot}$ stands for the Euclidean distance, when $\mathbb{S}^2_g$ is considered as an embedded submanifold in $\mathbb{R}^3$.}

\subsection{$N$-Vortex System:}
\label{Chapter_Introduction_Subsection_Vortex_System}
The motion of ideal fluid on $\mathbb{S}^2_g$ is governed by the Euler Equation in its velocity formulation 
\begin{align}
\label{EulerEquation}
\begin{cases}
\partial_{t} \mathbf{u} + \nabla_{\mathbf{u}}\mathbf{u}  = \nabla_{g} p \\
 div_g \mathbf{u} = 0 
 \end{cases}
 \tag{Velocity}
\end{align}
or in its vorticity formulation
\begin{align}
 \tag{Vorticity}
\frac{d\omega}{dt} + \mathcal{L}_{\mathbf{u}} \omega = 0
\end{align}
Here $\mathbf{u}\in \mathcal{T}\mathbb{S}^2_g$ is the velocity field of the fluid, $\omega = curl \mathbf{u}$ is the vorticity, $\nabla_{\mathbf{u}}$ is the covariant derivative along $\mathbf{u}$ 
and $\mathcal{L}_{\mathbf{u}}$ is the Lie derivative along $\mathbf{u}$.
The point vortices are then introduced by assuming that the vorticities are concentrated on finitely many Dirac measures, i.e. 
\begin{align}
\omega(t)= \sum_{i=1}^{n}\Gamma_i \delta_{z_i(t)}
\end{align}
Here $n$ is the total number of point vortices, $\Gamma_i\in \mathbb{R}_{*}$ is the vorticity of $i^{th}$ vortex, and $z_i(t)\in \mathbb{S}^2_g$ is the position of the $i^{th}$ vortex at time $t$. We will see in the next paragraph that the motion of vortices is governed by a Hamiltonian system, where the Hamiltonian function has singularity on the set $\mathcal{N}^n$, i.e., at the collision configurations. As a consequence the system is only defined on the collision-free set $\Sd$.

\paragraph{} Now consider the $n$-vortex problem on $\mathbb{S}^2_g$ with a fixed vorticity vector $\mathbf{\Gamma} = (\Gamma_1,\Gamma_2,...,\Gamma_n)$. Its phase space is then the symplectic manifold $((\mathbb{S}^2)^n_0, \Omega_g(\mathbf{\Gamma}))$, where the symplectic form $\Omega_g(\mathbf{\Gamma})$ reads:
\begin{align}
\label{Formula_symplectic_form}
\Omega_{g}(\mathbf{\Gamma}) = \Gamma_1\omega_g \oplus \Gamma_2 \omega_g \oplus...\oplus  \Gamma_n\omega_g 
\end{align}
The motion of the $n$ vortices on $\mathbb{S}^2_g$ is governed by the Hamiltonian system $\dot{\mathbf{z}}(t) = X_{H_{g}}(\mathbf{z}(t))$, with the Hamiltonian vector field $X_{H_{g}}$ defined as usual through
\begin{align}
\label{Formula_Hamiltonian_Vector_Field}
i_{X_{H_{g}}}\Omega_g (\mathbf{\Gamma})= dH_g
\end{align}
and the Hamiltonian equation is given by (see \cite{boatto2015vortices})
\begin{align}
\label{Formula_vortex_Hamiltonian}
H_g(\mathbf{z}) &= \sum_{1\leq i<j\leq n}\Gamma_i\Gamma_j G_g(z_i,z_j) + \sum_{1\leq i \leq n}\Gamma_i^2R_g(z_i)
\end{align}
In the above:
\begin{itemize}
\item $\mathbf{z}=(z_1,z_2,...z_n) \in \Sd$ denotes a collision free configuration of the $n$ individual vortices on the surface $\mathbb{S}^2_g$;
\item $G_g$ is the \textit{Green function} that solves the following equation in the sense of distribution :
\begin{align}
-\Delta_g G_g(z,w) = \delta_{z}(w) -\frac{1}{V_g(\mathbb{S}^2)} 
\end{align}
We can normalise Green function so that 
\begin{align}
\label{Formule_Green_Normalisation}
    \int_{\mathbb{S}^2}G(z,w) \omega_g(z) = 0  
\end{align}
\item $R_g$ is the \textit{Robin mass function}, which is the regular part of the Green function. It is defined by 
\begin{align}
R(z) = \lim_{w\rightarrow z} \big ( G_g(w,z) + \frac{1}{2\pi}\log d_g(w,z) \big )
\end{align}
where $d_g(w,z)$ denotes the distance between $w$ and $z$ on $\mathbb{S}^2_g$.  
\end{itemize}
Although in some cases one can try to find analytical formula of Green function (see \cite{koiller2019vortex,dritschel2015motion,yuuki2018green}), in general it is quite technical, making the explicit description of the dynamical equation rarely available.

\subsection{Isometry Group, Integrability, and Invariant Orbits}
\label{Chapter_Introduction_Subsection_Symmetry_of_Metrics}
The search of invariant orbits might be simplified if the metric has some non-trivial isometry group. Based on such isometry group a reduction can often be carried out. This will decrease the degree of freedom and simplify the search of invariant orbits. In this subsection we illustrate this fact by several examples:
\begin{Exp}[$\mathbf{SO}(3)$ symmetry]
\label{example_SO3}
Let $\mathbb{S}^2_{g_0}$ be the unit sphere in figure $1 (a)$, with $g_0$ the round metric\footnotemark. In the spherical coordinate $(\theta, \phi)$, a point $\eta =(\eta_x,\eta_y,\eta_z)\in \mathbb{S}^2 \subset \mathbb{R}^3$ is represented as
\begin{align*}
\eta_x = \sin\theta \cos \phi,  \quad \eta_y = \sin\theta \sin \phi,  \quad \eta_z = \cos \theta
\end{align*}
By choosing the symplectic coordinate $\omega = dp\wedge dq$ with 
\begin{align*}
p_i  = \cos \theta_i, \quad q_i  = \phi_i 
\end{align*}
and let $l_{ij}$ be the \textbf{chord length} between $z_i=(p_i,q_i)$ and $z_j=(p_j,q_j)$, The Hamiltonian is given by
\begin{align}
\label{Formula_Vortex_Round_Sphere}
H(\mathbf{z}) &= -\frac{1}{4\pi}\sum_{1\leq i<j\leq n}\Gamma_i\Gamma_j \log l_{ij}^2 \\
&= -\frac{1}{4\pi}\sum_{1\leq i<j\leq n}\Gamma_i\Gamma_j \log(1- \sqrt{1-p_i^2}\sqrt{1-p_j^2}\cos(q_i-q_j)-p_i p_j).\notag
\end{align}
Note that the system is invariant under the diagonal action of $\mathbf{SO}(3)$. The 3-vortex problem is thus integrable \cite{bogomolov1977dynamics,kidambi1998motion,sakajo1999motion}, while when  $n\geq 4$, the system is no longer integrable and there exists both chaos and KAM tori \cite{bagrets1997nonintegrability,lim1990kam}. For a detailed review of the $n$-vortex problem on the standard sphere, one can turn to the book of Newton \cite{newton2013n} and the references therein. 
\end{Exp}
\footnotetext{The round metric $g_0$ is induced by the natural Riemannian metric tensor on the Euclidean space $\mathbb{R}^3$ whose isometry group is $\mathbf{SO}(3)$. Taking $(\theta, \phi)$ as the standard spherical coordinate the metric is represented as $g_0= \begin{bmatrix} 1& 0\\
0& sin^2\theta\end{bmatrix}$}
\begin{figure}[ht]
\begin{center}
\includegraphics[width=140mm,scale=0.5]{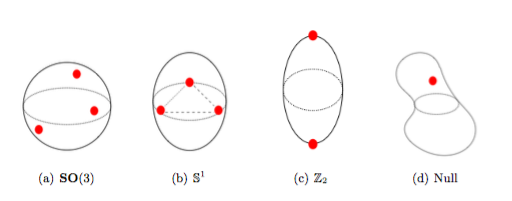}
\newline
\end{center}
\caption{Spheres with/without symmetry}
\end{figure}
\begin{Exp}[$\mathbb{S}^1$ symmetry]
Let $E_1$ be an ellipsoid of revolution in figure $1 (b)$, with $g_1$ its natural metric. By the uniformization theorem there exists a biholomorphic map $f: \mathbb{S}^2 \rightarrow E_1$ equivariant under the $\mathbb{S}^1$ symmetry. Then $\mathbb{S}_g^2$ with the pull-back metric $g = f^{*}g_{1}$ inherits the $\mathbb{S}^1$ symmetry with respect to the axis of rotation. As a result, if one puts identical $n$ vortices on the vertices of a $n$-polygon on a section perpendicular to the axis of rotation, they will rotate uniformly on a fixed latitude around the axis of rotation and become a relative equilibrium\footnotemark{}. 
\end{Exp}
\footnotetext{This means all vortices rotating uniformly around its centre of vorticity, thus becomes a fixed point in certain rotating frame.}
\begin{Exp}[$\mathbb{Z}_2$ symmetry \cite{koiller2019vortex}]
Let $E_2$ be an triaxial ellipsoid in figure $1 (c)$, with $g_2$ its natural metric. Such a manifold could be parametrised by $\frac{x^2}{a^2} + \frac{y^2}{b^2} + \frac{z^2}{c^2}=1$ with $0<a<b<c$. By the uniformization theorem there exists a biholomorphic map $f: \mathbb{S}^2 \rightarrow E_2$ equivariant under the symmetry with respect to the origin. Then $\mathbb{S}_g^2$ with the pull-back metric $g = f^{*}g_{2}$ inherits the $\mathbb{Z}_2$ symmetry with respect to the origin. As a result the three pairs of axis endpoints are natural candidates for fixed points of identical vortex dipole problem.
\end{Exp}
\begin{Exp}[No non-trivial symmetry]
Let $E_3$ be a general smooth topological sphere in figure $1 (d)$, with $g_3$ its natural metric. Such a manifold has no extra symmetric group. By the uniformization theorem there exists a biholomorphic map $f: \mathbb{S}^2 \rightarrow E_3$ . Then $\mathbb{S}_g^2$ with the pull-back metric $g = f^{*}g_{3}$ enjoys no extra symmetry neither. The only conserved quantity is the energy, since it is an autonomous Hamiltonian system. Even a $2$-vortex problem on $\mathbb{S}^2_g$ is plausible to be non-integrable due to the lack of symmetry, thus making the search of invariant orbits difficult for $n\geq 2$.
\end{Exp}

\subsection{Main Results} 
\label{Chapter_Introduction_Subsection_Main_Results}
The main results in this article concern the fixed points and the periodic orbits of the $n$-vortex problem on $\mathbb{S}^2_g$ for arbitrary $g$. To this end, we need to put some constraints on the vorticities. 
\paragraph{Non-degeneracy}
Let $\Lambda\subset \{1,2,3,...,n\}$ be a subset of indices, and denote 
\begin{align*}
\Gamma(\Lambda)= \sum_{i,j \in \Lambda, i\neq j } \Gamma_i \Gamma_j
\end{align*}

\begin{Def}[Non-degenerate vorticity]
A vorticity vector $\mathbf{\Gamma}$ is \textit{non-degenerate} if 
\begin{align}
\label{Condition_No_Collision}
\forall \Lambda \subset\{1,2,...,n\},  \quad \Gamma(\Lambda) \neq 0 \tag{P1}
\end{align} 
\end{Def}
As we know the main technical part in the analysis of the vortex is the singularity at collision. This constraint will prevent stationary configurations from accumulating into the collision set $\mathcal{N}^{n}$. As a result we can work on a compact subset of the configuration manifold. 
\paragraph{Thin Vorticity}
Next recall that given $n$ positive real numbers $\{\alpha_i\}_{1\leq i\leq n}$ , we call them \textit{commensurable} if $\alpha_i / \alpha_j \in \mathbb{Q}, \forall 1\leq i < j \leq n$. In particular, this means that 
\begin{align*}
  \exists \beta \in \mathbb{R}_{+}  \text{ and }  l_i \in \mathbb{N} \text{ s.t. } \alpha_i = \beta l_i \quad \forall 1\leq i\leq n
\end{align*}
Now let 
\begin{align*}
\kappa(\alpha_1,\alpha_2,...,\alpha_n) \coloneqq \beta h(l_1,l_2,...,l_n)
\end{align*}
where $h(l_1,l_2,...,l_n)$ denotes the highest common factor of $\{l_i\}_{1\leq i \leq n}$. In particular if we study the n-vortex problem on $\mathbb{S}^2_g$ with the vorticity vector $\mathbf{\Gamma} = (\Gamma_1, \Gamma_2,...,\Gamma_n)$, we can formulate the following definition:
\begin{Def}[Thin vorticity]
\label{Definition_Thin_Vortex}
We will say the $k^{th}$ vorticity $\Gamma_k$ is \textit{thin} with respect to $\mathbf{\Gamma}$ if 
\begin{align}
\label{Condition_Thin_Vorticity}
\Gamma_k \leq \kappa(\Gamma_1, \Gamma_2,..., \Gamma_{k-1}, \Gamma_{k+1},...,\Gamma_k) \tag{P2}
\end{align}
and we will call the corresponding vortex $z_k$ a \textit{thin vortex}.
\end{Def}
We describe the vorticity as being ``thin" because such property is related to the non-squeezing phenomena of symplectic embedding. Note that being thin is not an open property for the vorticity vector. 
\subsubsection{Theorems on Fixed Points}
Recall that $\mathbb{S}^2$ does not determine \textit{\`a priori} any Riemannian metric, but rather a unique conformally equivalent class of Riemannian metrics $[g]$, according to the uniformization theorem. Given any two Riemannian metrics $ g_1, g_2 \in [g]$ on $\mathbb{S}^2$ there exists a smooth function $\rho: \mathbb{S}^2 \rightarrow \mathbb{R}$ s.t. $g_2 = e^{2\rho}g_1$, and passing from $g_1$ to $g_2$ is called a conformal change of metric with conformal factor $\rho$. In particular, we use $g_0$ to represent the round metric defined in example \ref{example_SO3}. When there exists no risk of ambiguity, we will sometimes use $g(\rho)$ to denote the metric $g= e^{2\rho}g_0$ for short.
\paragraph{}Let $F$ be the critical set of the $n$-vortex problem with vorticity vector $\mathbf{\Gamma} = (\Gamma_1, \Gamma_2,...,\Gamma_n)$ on $\mathbb{S}^2_g$, i.e.
\begin{align*}
F= \{\mathbf{z}\in (\mathbb{S}^2_g)^n|dH_g(\mathbf{z})=\mathbf{0}\}
\end{align*}
Using a version of infinite dimensional transversality theorem, we prove that:
\begin{MainThm}
\label{Theorem_Main_Morse}
Fix a non-degenerate vorticity vector $\mathbf{\Gamma}$. For an open dense subset $\mathcal{D} \subset \mathcal{C}^{\infty}(\mathbb{S}^2, \mathbb{R})$,  consider $\mathbb{S}^2_g$ with $g= e^{2\rho} g_0, \rho \in \mathcal{D}$, then
\begin{enumerate}
    \item The Hamiltonian function $H_g$ is a Morse function on $\Sd$;
    \item There are only finitely many fixed points of the $n$-vortex problem on $\mathbb{S}^2_g$, i.e. $|F|<\infty$.
\end{enumerate}
\end{MainThm}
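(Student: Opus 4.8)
The plan is to run a parametric transversality argument over the space of conformal factors, after first confining all critical points to a compact region by means of the non-degeneracy hypothesis. Throughout write $g(\rho)=e^{2\rho}g_0$ and $\mathcal F(\mathbf z,\rho)=dH_{g(\rho)}(\mathbf z)$, regarded as a section of $T^*\Sd$; that $H_{g(\rho)}$ is Morse is exactly transversality of $\mathcal F(\cdot,\rho)$ to the zero section. \emph{Step 1 (compactness).} I would first use \eqref{Condition_No_Collision} to show that for every $\rho$ the critical set $F_\rho=\{\mathbf z:\mathcal F(\mathbf z,\rho)=0\}$ lies in a compact $K\subset\Sd$, and that $K$ can be taken independent of $\rho$ over any $C^1$-bounded set of conformal factors. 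If not, some sequence of critical points approaches $\mathcal N^n$; passing to an innermost colliding cluster $\Lambda$ of indices and pairing $\mathcal F$ with the dilation vector field of that cluster (centred at its limit point), the local expansion $G_{g}(z,w)=-\tfrac1{2\pi}\log d_g(z,w)+(\text{smooth})$ together with $d_g\sim e^{\rho}|\cdot|$ makes every contribution $O(\operatorname{diam}\Lambda)$ except the singular diagonal terms inside $\Lambda$, which sum to $-\tfrac1{4\pi}\Gamma(\Lambda)\neq0$ by \eqref{Condition_No_Collision}; this contradicts $\mathcal F=0$ for large indices. Since the leading term is $\rho$-free and the error is governed by $\|\rho\|_{C^1}$, the bound is uniform. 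This blow-up/cluster analysis of the collision singularity is the step I expect to require the most care.

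\emph{Step 2 (how the metric enters $H$).} Next I would record the dependence of $H_{g(\rho)}$ on $\rho$. Because in dimension two $\Delta_g\,\omega_g=\Delta_{g_0}\,\omega_{g_0}$, the Green function satisfies $G_{g(\rho)}(z,w)=G_{g_0}(z,w)+v_\rho(z)+v_\rho(w)+C(\rho)$, where $v_\rho$ is the zero-mean solution of $\Delta_{g_0}v_\rho=e^{2\rho}/V_{g(\rho)}-1/V_{g_0}$ and $C(\rho)$ is the constant fixed by \eqref{Formule_Green_Normalisation}; since $d_g(z,w)\sim e^{\rho(z)}d_{g_0}(z,w)$ this yields $R_{g(\rho)}(z)=R_{g_0}(z)+2v_\rho(z)+\tfrac1{2\pi}\rho(z)+C(\rho)$. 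Substituting into \eqref{Formula_vortex_Hamiltonian}, with $\Gamma_{\mathrm{tot}}=\sum_j\Gamma_j$,
\begin{align*}
H_{g(\rho)}(\mathbf z)=H_{g_0}(\mathbf z)+\sum_{i=1}^n\Big(\Gamma_i(\Gamma_{\mathrm{tot}}+\Gamma_i)\,v_\rho(z_i)+\tfrac{1}{2\pi}\Gamma_i^2\,\rho(z_i)\Big)+c(\rho).
\end{align*}
The decisive observation is that, apart from the nonlocal term $v_\rho$, the metric acts through the purely \emph{local} term $\tfrac1{2\pi}\Gamma_i^2\rho(z_i)$ at each vortex; smoothness of $\rho\mapsto v_\rho$ in the relevant $C^k$ norms is elliptic regularity. (Deriving the Robin-mass transformation cleanly is the other somewhat delicate point.)

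\emph{Step 3 (surjectivity of the metric derivative).} At any $\mathbf z_0\in F_\rho$ I claim the linear map $\dot\rho\mapsto\frac{d}{ds}\big|_0\mathcal F(\mathbf z_0,\rho+s\dot\rho)\in T^*_{\mathbf z_0}\Sd\cong\mathbb R^{2n}$ is onto. Its $j$-th block equals $\Gamma_j(\Gamma_{\mathrm{tot}}+\Gamma_j)\,d\dot v(z_{j,0})+\tfrac1{2\pi}\Gamma_j^2\,d\dot\rho(z_{j,0})$ with $\dot v$ solving an elliptic equation whose right-hand side is $O(\|\dot\rho\|_{L^p})$. Were the image proper, a nonzero $(\xi_1,\dots,\xi_n)$ would annihilate it; choosing $k$ with $\xi_k\neq0$ and testing with $\dot\rho_\epsilon$ supported in a ball of radius $\epsilon$ at $z_{k,0}$ (which misses the other vortices, distinct because $\mathbf z_0\in\Sd$) so that $d\dot\rho_\epsilon(z_{k,0})$ is a fixed covector non-orthogonal to $\xi_k$ while $\|\dot\rho_\epsilon\|_{L^p}\to0$, one gets $d\dot v_\epsilon(z_{j,0})\to0$ for all $j$, whence in the limit $\tfrac1{2\pi}\Gamma_k^2\langle\xi_k,d\dot\rho_\epsilon(z_{k,0})\rangle=0$, contradicting $\Gamma_k\neq0$.

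\emph{Step 4 (conclusion).} Fix $\rho_0$ and the uniform compact $K$ of Step 1. By Step 3 and compactness of $K$, finitely many smooth $\phi_1,\dots,\phi_L$ can be selected so that $\{\frac{d}{ds}\big|_0\mathcal F(\mathbf z,\rho_0+s\phi_l)\}_l$ spans $\mathbb R^{2n}$ for every $\mathbf z\in K$; hence $0$ is a regular value of $(\mathbf z,t)\mapsto\mathcal F(\mathbf z,\rho_0+\textstyle\sum_l t_l\phi_l)$ on a neighbourhood of $K$ times a small ball $B_\delta\subset\mathbb R^L$, using Step 1 to keep all zeros inside $K$. By the ordinary parametric transversality (Sard) theorem, a full-measure, hence dense, set of $t\in B_\delta$ makes $\mathcal F(\cdot,\rho_0+\sum_l t_l\phi_l)$ transverse to the zero section on $K$, i.e. all its critical points — all of them, by Step 1 — nondegenerate; since $\|\sum_l t_l\phi_l\|_{C^m}\to0$ as $t\to0$ for every $m$, this gives $C^\infty$-density of $\mathcal D=\{\rho:H_{g(\rho)}\text{ is Morse on }\Sd\}$. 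Openness of $\mathcal D$ in $C^\infty$ (indeed in $C^2$) follows because a Morse $H_{g(\rho_0)}$ has finitely many nondegenerate critical points, all in $K$ by Steps 1--2, a configuration stable under $C^2$-small perturbations of $\rho$ by the inverse function theorem together with the uniform bound $|dH_{g(\rho_0)}|\ge\delta$ off those points. This proves part (1); for part (2), when $\rho\in\mathcal D$ the critical points of $H_{g(\rho)}$ are isolated (Morse) and contained in the compact $K$ (Step 1), hence $|F|<\infty$. The same scheme can be phrased via the Sard--Smale theorem on the Banach space of $C^k$ conformal factors and then upgraded to $C^\infty$ using Steps 1--2; the finite-dimensional reduction above simply avoids that detour.
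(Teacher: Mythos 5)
Your proposal is correct, and it reaches the same two conclusions by a genuinely different route at three of the four stages. For compactness of the critical set, the paper (Lemma \ref{Lemma_Accumulation_Cluster}) writes the stationarity equations for the $\mathbb{R}^3$-embedded vortices and takes cross products with the position vectors to extract $\Gamma(\Lambda)=0$ at an accumulation point; your pairing of $dH$ with the dilation field of the colliding cluster is the classical O'Neil--Roberts planar argument localized in a conformal chart, and it works here because each singular pair inside $\Lambda$ contributes exactly $-\tfrac{1}{4\pi}\Gamma_i\Gamma_j\cdot 2$ to the pairing regardless of relative collision rates (so the ``innermost cluster'' precaution is not even needed), while all other terms are $O(\operatorname{diam}\Lambda)$. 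For surjectivity of the $\rho$-derivative, the paper combines the local term $\tfrac1{2\pi}\Gamma_i^2\,d\psi(z_i)$ with the nonlocal Robin-mass term into operators $(\Delta_g^{-1}+\beta_i^{-1}I)$ and must therefore impose the spectral condition \eqref{Condition_Vorticity_Spectrum} (Proposition \ref{Proposition_transversality}), which is then shown to hold generically by a homothety trick (Lemma \ref{Lemma_Not_in_Specturm}); your concentrated-bump test functions make the nonlocal contribution vanish in the limit (the right-hand side of the elliptic equation for $\dot v$ goes to zero in $L^p$, hence $d\dot v\to 0$ uniformly), so only the local term survives and the spectral condition is bypassed entirely --- a genuine simplification, bought at the price of the slightly more careful limiting argument. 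Finally, you replace the infinite-dimensional Henry/Sard--Smale theorem (Theorem \ref{Theorem_parametrized_version_of_Sard_Smale}) by a finite-dimensional reduction: finitely many directions spanning the cokernel over the compact $K$, then ordinary parametric transversality; this trades Banach-manifold machinery for an open-cover argument and has the added benefit of giving $\mathcal{C}^\infty$-density directly, where the paper passes through $\mathcal{C}^2$ and invokes density of $\mathcal{C}^\infty$ in $\mathcal{C}^2$. One remark: your conformal transformation formula carries the coefficient $\Gamma_i(\Gamma_{\mathrm{tot}}+\Gamma_i)$ on the nonlocal term, whereas the paper's quoted formula \eqref{Formula_Hamiltonian_under_a_Conformal_Change_of_Metric} has $-\Gamma_{\mathrm{tot}}\Gamma_i$; your derivation is internally consistent with the paper's definitions of $G_g$ and $R_g$, and in any case both arguments only use the structural fact that the $\rho$-dependence splits into the local term $\tfrac1{2\pi}\Gamma_i^2\rho(z_i)$ plus a term that is smooth in $z_i$ and controlled by weak norms of $\rho$, so the discrepancy does not affect the validity of your proof.
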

This theorem permits us to apply Morse theoretical argument for stationary $n$-vortex configuration on $\mathbb{S}^2_g$. It serves as the starting point for searching periodic orbits via either perturbative methods or variational methods. Moreover such non-degenerate fixed points could be used for prospective construction of steady vortex patches through implicit function theory, as in the planar case \cite{long2019concentrated}. 

We will henceforth refer to the open dense subset $\mathcal{D}$ in theorem \ref{Theorem_Main_Morse} as $\mathcal{D}_{M}$.

\subsubsection{Theorems on Periodic Orbits}
For the investigation of periodic orbits, we restrict ourselves to the positive vorticity, i.e. $\Gamma_{i}>0, \forall 1\leq i\leq n$. 
\paragraph{Existence of Periodic Orbits} First suppose that a Riemannian metric $g=g(\rho)$ s.t. $\rho \in \mathcal{D}_M$ is equipped on $\mathbb{S}^2$. Then with the presence of a thin vortex one can prove the existence of infinitely many periodic orbits for the $n$-vortex problem on $\mathbb{S}^2_g$:
\begin{MainThm}
\label{Theorem_Main_Periodic_Orbits}
Consider the $n$-vortex problem on $\mathbb{S}^2_g$ equipped with a Riemannian metric tensor $g$. Suppose that $\{\Gamma_i\}_{1\leq i\leq n} $ are all positive and that $\mathbf{\Gamma}$ possesses a thin vorticity, say $\Gamma_k$. Define for this index $k$ the following two values:
\begin{align}
c_1(k, g) = \min_{\eta\in \mathbb{S}^2_g} \min_{ \substack{\mathbf{z} \in \Sd \\ z_k = \eta}    }  H_g(\mathbf{z}),\quad c_2(k, g) = \max_{\eta\in \mathbb{S}^2_g} \min_{ \substack{\mathbf{z} \in \Sd \\ z_k = \eta}    }  H_g(\mathbf{z})
\end{align}
Then for $g=g(\rho)$ with $\rho \in \mathcal{D}_M$,
\begin{enumerate}
\item $-\infty < c_1(k,g) < c_2(k,g) < \infty$ strictly;
\item Let $I_g=(c_1(k,g), c_2(k,g))$, then $\forall c\in I_g$, there exists a periodic orbit $\{\mathbf{z}_j\}_{j\in \mathbb{N}}$ with 
\begin{align}
\lim_{j\rightarrow \infty} H_g(\mathbf{z}_j) = c
\end{align}
\end{enumerate}
\end{MainThm}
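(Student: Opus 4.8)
\smallskip
\noindent\textbf{Proof strategy.}
The whole argument rests on one observation: for positive vorticities $H_g$ is \emph{proper and bounded below} on $\Sd$. First I would record this. Since $R_g$ is continuous on the compact sphere it is bounded, and since $G_g(z,w)+\tfrac1{2\pi}\log d_g(z,w)$ extends continuously to $\mathbb S^2\times\mathbb S^2$ (its diagonal value being $R_g$) while $-\log d_g$ is bounded below on a compact surface, $G_g$ too is bounded below; as every coefficient $\Gamma_i\Gamma_j$ and $\Gamma_i^2$ in \eqref{Formula_vortex_Hamiltonian} is positive, $H_g\ge -M$. Moreover, as $\mathbf z\to\mathcal N^n$ some pair has $d_g(z_i,z_j)\to0$, so $\Gamma_i\Gamma_jG_g(z_i,z_j)\to+\infty$ while the remaining terms stay bounded below, whence $H_g(\mathbf z)\to+\infty$. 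Therefore every sub-level set $\{H_g\le c\}$ is a compact subset of $\Sd$, and for a regular value $c$ the level set $S_c:=H_g^{-1}(c)$ is a compact hypersurface with no rest points.

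For part (1) I would argue as follows. The inner minimum defining $c_1$ is always $\ge\min_{\Sd}H_g$ and, taking for $\eta$ the $k$-th coordinate of a global minimizer, it equals $\min_{\Sd}H_g$; hence $c_1=\min_{\Sd}H_g$, finite by the previous paragraph. Setting $h(\eta):=\min_{\mathbf z:\,z_k=\eta}H_g(\mathbf z)$, properness of $H_g$ on each slice $\{z_k=\eta\}$ makes $h$ a well-defined, finite, continuous function on the compact sphere, so $c_2=\max h<\infty$. For the strict inequality, suppose $h\equiv c$ (then $c=c_1$). Fix any $\eta_0$ and a minimizer $\mathbf w_0$ of $H_g$ on the slice $\{z_k=\eta_0\}$; then $\partial_{\mathbf w}H_g(\eta_0,\mathbf w_0)=0$, and since $H_g(\eta,\mathbf w_0)\ge h(\eta)=c=H_g(\eta_0,\mathbf w_0)$ for all $\eta$, the map $\eta\mapsto H_g(\eta,\mathbf w_0)$ has a minimum at $\eta_0$, so $\partial_\eta H_g(\eta_0,\mathbf w_0)=0$ as well. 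Thus $dH_g(\eta_0,\mathbf w_0)=0$ for every $\eta_0\in\mathbb S^2$, producing uncountably many fixed points of the $n$-vortex problem and contradicting $|F|<\infty$ from Theorem~\ref{Theorem_Main_Morse} (applicable since $\rho\in\mathcal D_M$ and a positive $\mathbf\Gamma$ is non-degenerate). Hence $-\infty<c_1<c_2<\infty$.

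For part (2) the plan is to combine a Hofer--Zehnder capacity bound with the almost-existence theorem. Put $U:=\{H_g<c_2\}\subset\Sd$; its image under the projection $\mathbf z\mapsto z_k$ is $\Omega:=\{h<\max h\}$, a nonempty \emph{proper} open subset of $\mathbb S^2$ (it omits the maximizers of $h$). After reordering factors, $U$ sits inside the open set $\Omega\times(\mathbb S^2_g)^{n-1}$, in which the $k$-th sphere factor has been ``opened up''. This is where the thin vorticity hypothesis on $\mathbf\Gamma$ is meant to be used: together with Gromov non-squeezing and the inequality $c_{\mathrm{HZ}}(\mathbb S^2(A)\times N)\le A$ for a closed symplectic $N$, it should yield $c_{\mathrm{HZ}}(U)\le\Gamma_k V_g(\mathbb S^2)<\infty$. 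Since $\{H_g\le c'\}$ is compact for every $c'<c_2$, the restriction $H_g|_U\colon U\to(-\infty,c_2)$ is proper, so the almost-existence theorem (Hofer--Zehnder, in Struwe's form) gives, for Lebesgue-almost every $c'\in(-\infty,c_2)$, that either $S_{c'}=\emptyset$ or $S_{c'}$ carries a non-constant periodic orbit of $X_{H_g}$; since $S_{c'}\ne\emptyset$ exactly when $c'\ge c_1$, this means that for almost every $c'\in(c_1,c_2)$ there is such an orbit. Finally, given an arbitrary $c\in I_g$, I would pick such ``good'' values $c_j\to c$ (they are dense, being of full measure) and take the corresponding orbits $\mathbf z_j$; then $H_g(\mathbf z_j)=c_j\to c$, which is exactly the claim.

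The hard part will be the capacity estimate $c_{\mathrm{HZ}}(U)<\infty$: one has to convert the combinatorial ``thin'' condition (the commensurability encoded in $\kappa$) into an honest symplectic embedding of a neighbourhood of $U$ into a model of finite, explicitly bounded Hofer--Zehnder capacity --- this is the genuine non-squeezing content of the theorem, and presumably also the reason the admissible energies form the bounded interval $(c_1,c_2)$ rather than $(c_1,\infty)$. By comparison, the compactness/properness bookkeeping in the first two paragraphs and the passage from ``almost every $c'$'' to ``every $c$'' are routine.
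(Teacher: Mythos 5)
Your treatment of part (1) is essentially the paper's own argument: for positive vorticities $H_g$ is bounded below and proper on the collision-free set, so $c_1$ and $c_2$ are finite, and the strict inequality $c_1<c_2$ follows by contradiction, since $c_1=c_2$ would force a global minimizer (hence a critical point) on every slice $\{z_k=\eta\}$, contradicting $|F|<\infty$ from Theorem~\ref{Theorem_Main_Morse}. That part is correct and matches the paper.

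For part (2) you take a genuinely different formal route (a Hofer--Zehnder capacity bound for $U=\{H_g<c_2\}$ plus the almost-existence theorem), and you leave precisely the load-bearing step unproven: the estimate $c_{\mathrm{HZ}}(U)<\infty$. That is a genuine gap, not bookkeeping --- without it nothing in your part (2) is established. The paper never passes through a capacity of a sublevel set; it applies Hofer and Viterbo's Theorem 1.12 (Lemma~\ref{Theorem_Hofer_Viterbo}) directly on the closed product $\mathbb{S}^2_g\times(\mathbb{S}^2_g)^{n-1}$ with the weighted form $\omega_k\oplus\omega_k^{*}$. The two hypotheses of that theorem are exactly what your two ingredients were meant to deliver: (i) the area condition $\int_{\mathbb{S}^2_g}\omega_k\le m((\mathbb{S}^2_g)^{n-1},\omega_k^{*})$, which by Proposition~\ref{Proposition_Thin_HCF} (computing the minimal positive action of a weighted product of spheres via B\'ezout) is \emph{equivalent} to $\Gamma_k$ being thin; and (ii) a Hamiltonian locally constant near a point and near an entire slice $\{z_\infty\}\times(\mathbb{S}^2_g)^{n-1}$. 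For (ii) the strictness $c_1<c_2$ is used a second time, not merely to make $I_g$ nonempty: for $c\in(c_1,c_2)$ there is $z_c$ with $c<\min_{z_k=z_c}H_g(\mathbf z)$, so the slice $\{z_c\}\times(\mathbb{S}^2_g)^{n-1}$ lies entirely on one side of $S_c$; a Jordan--Brouwer separation argument (using $H_1((\mathbb{S}^2)^n,\mathbb{Z}_2)=0$) then produces the two components on which the cutoff Hamiltonian $F=\phi(H_g-c)$ is set to $0$ and $1$, Theorem 1.12 applies, and letting $\epsilon\to0$ yields the sequence $\mathbf z_j$ with $H_g(\mathbf z_j)\to c$. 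If you wish to keep your capacity formulation, you would have to prove $c_{\mathrm{HZ}}(\Omega\times(\mathbb{S}^2_g)^{n-1},\omega_k\oplus\omega_k^{*})\le\Gamma_k V_g(\mathbb{S}^2)$ for a proper open $\Omega\subset\mathbb{S}^2$ under the thin hypothesis; the natural proof of that bound is again the Hofer--Viterbo theorem, so the direct application is both simpler and what the paper actually does.
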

We prove this theorem using a result in the theory of J-holomorphic spheres. Hence the orbits obtained are not by method of bifurcation but of a variational nature instead. In particular, we see in theorem \ref{Theorem_Main_Periodic_Orbits} that one can specify the exact energy interval $I_g$ in which the method applies. 
\paragraph{Contact Structure of Vortex Dipole}
In particular if we consider the identical $2$-vortex problem, then each of them will be a thin vortex. We go one step further to show that for the vortex dipole problem on $\mathbb{S}^2_{g}$, there exists hyper-surfaces of contact type:  
\begin{MainThm}
\label{Theorem_Main_Contact}
Consider the identical $2$-vortex problem on $\mathbb{S}^2_g$. Then for $\norm{\rho}_{\mathcal{C}^1}$ small there exists a constant $c_0$ s.t. for any $c>c_0$ the hyper-surface $H^{-1}_{g}(c)$ is of contact type.
\end{MainThm}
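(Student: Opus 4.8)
The plan is to exhibit a Liouville vector field $X$ defined on a neighbourhood of $H_g^{-1}(c)$ that is transverse to this hypersurface, which by definition makes it of contact type. The natural candidate is a vector field whose flow expands the symplectic form, i.e. $\mathcal{L}_X \Omega_g(\mathbf{\Gamma}) = \Omega_g(\mathbf{\Gamma})$, and one checks transversality by verifying $dH_g(X)>0$ along the level set. For the identical $2$-vortex problem the phase space is $(\mathbb{S}^2_g)^2_0$ with symplectic form $\Gamma(\omega_g\oplus\omega_g)$, so the problem essentially lives on a $4$-dimensional manifold minus the diagonal. The first step is to reduce to the round sphere via the conformal factor: write $g=e^{2\rho}g_0$, so that $\omega_g=e^{2\rho}\omega_{g_0}$ and the Hamiltonian splits as $H_g = H_{g_0} + (\text{correction terms of order }\|\rho\|_{\mathcal{C}^1})$, using the known transformation law of the Green function and Robin mass under conformal change. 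For $\rho=0$ (the round sphere) the identical vortex dipole is completely explicit: from formula \eqref{Formula_Vortex_Round_Sphere} one has $H_{g_0}(z_1,z_2)=-\frac{\Gamma^2}{4\pi}\log l_{12}^2$ plus a constant from the Robin mass (which is constant on the round sphere by homogeneity), so the level sets are precisely the hypersurfaces of fixed chordal distance $l_{12}$, and these are well understood.

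Second, I would construct the Liouville field on the round sphere directly. On each $\mathbb{S}^2$ factor, away from a point, $\omega_{g_0}$ is exact, $\omega_{g_0}=d\lambda$ for a primitive $\lambda$ (e.g. in the symplectic coordinates $(p,q)$ of Example \ref{example_SO3}, $\omega=dp\wedge dq=d(p\,dq)$), and the field $Y$ with $i_Y\omega_{g_0}=\lambda$ is Liouville. Taking $X=Y_1\oplus Y_2$ on the product gives a Liouville field for $\Omega_{g_0}(\mathbf{\Gamma})$; the remaining task is to check $dH_{g_0}(X)>0$ on $H_{g_0}^{-1}(c)$ for $c$ large, i.e. near the collision set where $l_{12}\to 0$. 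Large energy forces the two vortices close together, so the computation localizes: near the diagonal the geometry is asymptotically Euclidean and the vortex dipole behaves like the planar identical-pair problem, where the contact property of large energy level sets is classical (the pair rotates, and the standard radial Liouville field works). One must choose the primitives $\lambda_i$ centered appropriately (away from the region where collisions occur) so that $X$ is genuinely defined on a full neighbourhood of the level set; the non-degeneracy is not needed here since $n=2$ and identical vorticities give $\Gamma(\Lambda)=\Gamma^2\neq 0$ automatically, but one does use that the energy surface for $c>c_0$ stays in a compact collision-free region, which follows because high energy $\iff$ small $l_{12}$ bounded away from $0$ from below is false — rather, one needs the level set to be compact, which it is once we note $H_{g_0}\to+\infty$ only as $l_{12}\to 0$.

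Third, perturb: the condition of being of contact type is an \emph{open} condition in the $\mathcal{C}^1$-topology on both the hypersurface and the ambient symplectic form, provided things remain on a fixed compact set. For $c>c_0$ the level set $H_{g_0}^{-1}(c)$ lies in a compact region $K$ of $(\mathbb{S}^2_{g_0})^2_0$ bounded away from the diagonal; for $\|\rho\|_{\mathcal{C}^1}$ small the perturbed Hamiltonian $H_g$ is $\mathcal{C}^1$-close to $H_{g_0}$ on $K$, the perturbed symplectic form $\Omega_g(\mathbf{\Gamma})$ is $\mathcal{C}^0$-close to $\Omega_{g_0}(\mathbf{\Gamma})$, and the same Liouville field $X$ (rescaled by a function to absorb the $e^{2\rho}$ factor, since $\mathcal{L}_X(e^{2\rho}\omega_{g_0})$ picks up a $d\rho$ term one must control) still satisfies $dH_g(X)>0$ on $H_g^{-1}(c)$. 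The constant $c_0$ can then be taken uniform for $\|\rho\|_{\mathcal{C}^1}$ in a small ball.

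The main obstacle I anticipate is two-fold and both parts concern \emph{globalizing} the Liouville field. First, $\omega_{g_0}$ is not globally exact on $\mathbb{S}^2$, so the primitive $\lambda_i$ necessarily has a singularity at one point on each sphere; I must arrange these singular points to lie outside a neighbourhood of $H_g^{-1}(c)$, which is delicate because $H_g^{-1}(c)$ projects \emph{onto all of} $\mathbb{S}^2_g$ under each coordinate projection (the vortices can be near each other anywhere on the sphere) — so no single choice of primitive works, and one must either patch several Liouville fields together with a partition of unity (checking the sum remains transverse) or, better, exploit that transversality only needs to hold on the level set itself, not on a neighbourhood, and use the flow to thicken. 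Second, controlling the $\mathcal{L}_X(d\rho\wedge\lambda)$-type error terms under the conformal perturbation requires that the primitive and the field $X$ be bounded on the relevant compact set independently of $\rho$, which again ties back to the choice of primitives. I expect the cleanest route is to follow the strategy used for the planar $N$-vortex problem (where large-energy level sets of clustering subsystems are of contact type), adapting the known planar construction to the sphere by working in the region where $l_{12}$ is small and patching with a standard argument elsewhere.
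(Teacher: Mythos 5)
Your overall strategy --- build an explicit transversal Liouville vector field for the round metric and then argue that the contact property survives a $\mathcal{C}^1$-small conformal perturbation --- is the same as the paper's. The gap is in the first step, which is the heart of the proof, and you have in fact correctly located it yourself without resolving it. Your candidate field $Y_1\oplus Y_2$ with $i_{Y_i}\omega_{g_0}=p_i\,dq_i$ fails for two independent reasons. First, the primitive $p\,dq$ is singular at the poles of each factor, and since $H_{g_0}^{-1}(c)$ surjects onto $\mathbb{S}^2$ under each projection, no single choice of poles avoids the level set; your proposed remedy of patching two such fields with a partition of unity $\chi$ destroys the Liouville property, since $\mathcal{L}_{\chi X_1+(1-\chi)X_2}\Omega=\Omega+d\chi\wedge(i_{X_1}\Omega-i_{X_2}\Omega)$ and the error term does not vanish. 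Second, even away from the singularities transversality fails: in the coordinates of Example 1.1 one computes for the configuration $p_1=p_2=p$, $q_1-q_2=\delta$ that $dH_{g_0}(Y_1\oplus Y_2)$ is proportional to $p^2$, so the field is \emph{tangent} to every level set along the equatorial configurations $p_1=p_2=0$. So the construction as written does not produce a transversal Liouville field on any level set, not just near the collision locus.

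The paper's way around both problems is to exploit the $\mathbf{SO}(3)$-invariance of $H_{g_0}$: each regular level set is a single $\mathbf{SO}(3)$-orbit (pairs at fixed chord length), so one chooses a reference configuration $\mathbf{z}^{\alpha}$ on a fixed meridian, assigns to it the vector pushing both vortices toward the north pole (which strictly increases the energy, hence is transversal), and defines the field elsewhere as the push-forward by the unique rotation carrying $\mathbf{z}^{\alpha}$ to the given configuration; the Liouville identity then propagates by equivariance (Lemma 3.3 and Appendix B). This gives the contact property for \emph{every} regular level of $H_{g_0}$, with no large-$c$ restriction and no patching. For the perturbation step the paper also does something cleaner than your error-term estimate: after normalizing the volume (Corollary 2.1), Dacorogna--Moser provides a genuine symplectomorphism $\psi:(\mathbb{S}^2_{g_0})^2\to(\mathbb{S}^2_{g})^2$, under which the Liouville field transports \emph{exactly}; only the transversality to the transported hypersurface needs openness. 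This matters because your plan of ``reusing the same field'' for $\Omega_g=e^{2\rho}\Omega_{g_0}$ runs into the problem that a Liouville field for $\Omega_{g_0}$ is not one for $\Omega_g$, and rescaling $X$ by a function does not fix this (it again introduces a $df\wedge i_X\Omega$ term). To repair your proof you should either import the $\mathbf{SO}(3)$-equivariant construction, or replace the neighbourhood criterion by the intrinsic one (a primitive of $\Omega|_{S_c}$ on $S_c$ itself, positive on the characteristic direction, using $H^2(S_c;\mathbb{R})=H^2(\mathbf{SO}(3);\mathbb{R})=0$), and in either case use the Moser symplectomorphism for the perturbation.
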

Theorem \ref{Theorem_Main_Contact} could be combined with theorem \ref{Theorem_Main_Periodic_Orbits} to look for periodic orbit exactly on a given hyper-surface.
\paragraph{Exclusion of Perverse Orbits} Finally we study the existence of perverse orbit. In the n-body problem and the n-vortex problem many symmetric periodic orbits are found, in particular the choreography :
\begin{Def}
A choreography of the $n$-vortex problem is a $T$-periodic orbit of the system where the $n$ individual vortices equally spread (in time) along a single closed curve\footnotemark, i.e.,
\begin{align}
z_{i-1}(t)=z_i(t+\frac{T}{n}), \quad \forall 1\leq i \leq n, t\in \mathbb{R}.
\end{align}
\end{Def}
\footnotetext{As a common convention $z_{n+1} = z_{0}$.}
Although we don't know whether a choreography always really exists, we see that apart from the identical vorticity case, the vorticity vector do not induce any non-trivial gauge group for all vortices. Hence it is natural to ask whether there exists a perverse choreography, i.e., choreography with non-identical vorticities. By adapting ourselves to the setting in \cite{ChencinerPerverse}, we prove that:  
\begin{MainThm}
\label{Theorem_Main_No_Perverse_Choregraphy}
Suppose that $\{\Gamma_i\}_{1\leq i\leq n}$ are not all identical. Then for $\mathbb{S}^2_g$ with $g= g(\rho), \rho \in \mathcal{D}_M$ the system possesses no choreography.
\end{MainThm}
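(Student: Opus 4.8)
The plan is to adapt Cheniner's argument on the non-existence of perverse choreographies to the vortex setting, where the crucial ingredient is that for $\rho\in\mathcal{D}_M$ the Hamiltonian $H_g$ is a Morse function with finitely many critical points (Theorem~\ref{Theorem_Main_Morse}). Suppose, for contradiction, that a choreography $\mathbf{z}(t)=(z_1(t),\dots,z_n(t))$ of period $T$ exists, so $z_{i-1}(t)=z_i(t+T/n)$ for all $i$. The first step is to observe that this symmetry forces $\mathbf{z}(t)$ to be invariant (as a curve in $\Sd$) under the cyclic permutation $\sigma$ acting on $(\mathbb{S}^2_g)^n$ composed with the time-shift $t\mapsto t+T/n$. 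In particular all $n$ vortices trace out the \emph{same} closed curve $\gamma\subset\mathbb{S}^2_g$, and the traversal is rigid: the $i$-th vortex is always the point $\gamma$ reached at a time $T/n$ before the $(i-1)$-th.

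Next I would exploit the constraint that the vorticities are \emph{not all identical}. If the motion along $\gamma$ were periodic with a genuine rotational $\mathbb{Z}_n$ symmetry of the configuration, the Hamiltonian structure would require the symplectic form $\Omega_g(\mathbf{\Gamma})$ to be invariant under $\sigma$; but $\sigma^*\Omega_g(\mathbf{\Gamma})=\Omega_g(\sigma\cdot\mathbf{\Gamma})$, and since the $\Gamma_i$ are not all equal, $\sigma\cdot\mathbf{\Gamma}\neq\mathbf{\Gamma}$. The key is to show that this mismatch between the geometric $\mathbb{Z}_n$-symmetry of the orbit and the absence of a corresponding symmetry of the Hamiltonian system forces the orbit to be constant in time. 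Concretely, consider the averaged one-form or the "relative equilibrium" reduction: the choreography condition means $\mathbf{z}(t)$ lies on a single group orbit only if $X_{H_g}$ is itself $\sigma$-equivariant up to the time shift; pulling back $i_{X_{H_g}}\Omega_g(\mathbf{\Gamma})=dH_g$ by $\sigma$ and using $\sigma^*H_g\neq H_g$ (because $H_g$ weights pairs by $\Gamma_i\Gamma_j$ and self-terms by $\Gamma_i^2$, which are not $\sigma$-invariant) yields a contradiction unless $\dot{\mathbf{z}}\equiv 0$. Then $\mathbf{z}(t)$ is a fixed point, hence not a choreography in the dynamical sense (or it degenerates to all vortices at one point, which is a collision excluded from $\Sd$).

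The subtle point — and the one I expect to be the main obstacle — is making precise the step "the choreography is constrained to a single group orbit." A priori the curve $\gamma$ may be complicated and the $n$ vortices, though on the same curve, need not form a configuration that is a single $\sigma$-orbit at each instant unless the spacing is exactly $T/n$ in a rigidly rotating frame; one must rule out the possibility that $\gamma$ is traversed non-uniformly so that the symmetry is only "in time" and not "in phase space." Here is where the Morse property enters decisively: following Cheniner's scheme, one passes to the quotient by the time-shift-plus-permutation symmetry and shows that a perverse choreography would produce a non-isolated critical manifold of $H_g$ (a circle of critical points, coming from the free symmetry), contradicting the finiteness $|F|<\infty$ of Theorem~\ref{Theorem_Main_Morse}. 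The work consists in checking that the Lagrangian/action functional whose critical points give the choreographies inherits the $\mathbb{Z}_n$-symmetry, that the perverse condition makes this symmetry act freely, and that therefore any critical point comes in a continuum — impossible for generic $\rho\in\mathcal{D}_M$. The positivity or non-degeneracy hypotheses on $\mathbf{\Gamma}$ are not needed beyond ensuring $\mathbf{z}(t)\in\Sd$ stays away from collisions; the real content is the genericity of $g$ via $\mathcal{D}_M$.
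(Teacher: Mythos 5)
Your proposal correctly identifies the target of the contradiction (the finiteness of fixed points for generic $\rho$) and correctly names Chenciner's scheme, but the mechanism you describe does not close, and the one step that makes the argument work is missing. The central gap: you assert that a perverse choreography ``would produce a non-isolated critical manifold of $H_g$.'' The points of a non-constant choreography are certainly not critical points of $H_g$ --- they move under $X_{H_g}$ --- so no contradiction with $|F|<\infty$ for the vorticity $\mathbf{\Gamma}$ can be extracted this way. The continuum of critical points lives for a \emph{different} Hamiltonian: the key step (Proposition \ref{Proposition_Fixed_Points_Adjustified_Vorticity}, which is Chenciner's trick transplanted to vortices) shows that the choreography relation $z_{i-1}(t)=z_i(t+T/n)$, combined with the bilinear dependence of $H_g$ on the vorticities, forces every configuration along the orbit to be a \emph{stationary} configuration of the $n$-vortex problem with the shifted vorticity vector $\tilde{\mathbf{\Gamma}}=(\Gamma_1-\bar{\Gamma},\dots,\Gamma_n-\bar{\Gamma})$. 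Concretely, one sums the Hamiltonian equations over the cyclic shifts to identify $i_{X^1_{H_g}}\bar{\Gamma}\omega_g$ with $d_{z_1}\bar{H}_g$ (the Hamiltonian with all vorticities equal to $\bar{\Gamma}$), and then the identity $\frac{1}{\Gamma_1-\bar{\Gamma}}d_{z_1}\tilde{H}_g=\frac{1}{\Gamma_1}d_{z_1}H_g-\frac{1}{\bar{\Gamma}}d_{z_1}\bar{H}_g$ kills the vector field of $\tilde{H}_g$ along the orbit. Since $\sum_{1\leq i<j\leq n}(\Gamma_i-\bar{\Gamma})(\Gamma_j-\bar{\Gamma})=-\sum_{i}(\Gamma_i-\bar{\Gamma})^2<0$ when the $\Gamma_i$ are not all identical, Theorem \ref{Theorem_Main_Morse} applied with the vorticity $\tilde{\mathbf{\Gamma}}$ forbids a continuum of such fixed points. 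Without this change of vorticity, your contradiction has no function to apply to.

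Your intermediate step --- that the mismatch $\sigma^{*}\Omega_g(\mathbf{\Gamma})=\Omega_g(\sigma\cdot\mathbf{\Gamma})\neq\Omega_g(\mathbf{\Gamma})$ ``forces $\dot{\mathbf{z}}\equiv 0$'' --- also does not work as stated: a choreography does not require $X_{H_g}$ to be $\sigma$-equivariant as a vector field; it only requires the particular orbit to intertwine $\sigma$ with the time shift $t\mapsto t+T/n$, which is a condition along one trajectory and yields no pointwise contradiction by pulling back the defining relation $i_{X_{H_g}}\Omega_g(\mathbf{\Gamma})=dH_g$. Likewise the loop-space variant (``the free $\mathbb{Z}_n$-action produces a continuum of critical points of the action'') proves nothing by itself: every periodic orbit of an autonomous system already comes with a circle of critical loops by time translation, and this never contradicts Morse-type finiteness of \emph{equilibria}. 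The essential missing idea is the passage to the adjusted vorticities $\Gamma_i-\bar{\Gamma}$, which converts the moving orbit into a curve of genuine fixed points of an auxiliary vortex system to which the finiteness theorem applies.
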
 
This theorem is consistent with the results given in \cite{Celli2003On} for the $n$-vortex in the plane. 

\paragraph{} The methods presented in this article might contribute in several aspects to the subject of the $n$-vortex problem on surfaces :
\begin{itemize}
\item{\textit{generic metrics}:} compared to fixed points and periodic orbits of the $n$-vortex problem on symmetric closed surfaces before, our approach does not rely on the symmetry of these metrics and works for an open dense subset of Riemannian metric tensor on $\mathbb{S}^2$.
\item{\textit{arbitrary number and flexible vorticity }:} our approach could apply for arbitrary number of point vortices even in the non-integrable cases, such property is important if one wants to approximate the smooth vorticity field by increasing the number of vortices. Moreover the method applies to more general choices of vorticities, including the identical vorticity case. 
\end{itemize}


\section{Conformal Metric, Transversality and Fixed Points}
\label{Chapter_Riemannian_Metric_and_Hamiltonian_Function}
The application of Morse theory to the relative equilibra of the planar $n$-vortex problem is noticed by Palmore \cite{palmore1982relative}. A detailed investigation is provided in the recent work of Roberts \cite{roberts2018morse}. Such ideas trace back to Smale \cite{Smale2006Problems} who suggests in 2006 the Morse theory as a promising approach towards the understanding of the $5^{th}$ of his 21-century problems \cite{smale1998mathematical}, i.e. the finiteness of central configurations. In the case of the $n$-vortex problem on $\mathbb{S}^2_g$ with an arbitrary metric $g$, We will study absolute equilibra directly since relative equilibrium might not exist unless $g$ is $\mathbb{S}^1$ symmetric. 

Note that normally the vorticity vector $\mathbf{\Gamma} = (\Gamma_1, \Gamma_2,...,\Gamma_n)$ is taken as the parameter set. Our approach, however, keeps the vorticity vector invariant but relies on the variation of metric tensor, thus coming with an infinite dimensional parameter set instead. Such an approach is motivated by the recent work of Bartsch et al. \cite{bartsch2017morse} on the non-degeneracy of Kirchoff-Robin function in a planar bounded domain.

\subsection{Conformal Change of Metric}
\label{Chapter_Riemannian_Subsection_Conformal_Change_of_Metric}
To investigate the non-degeneracy of fixed point under a perturbation of Riemannian metric, it is natural to ask first how the Hamiltonian $H_g$ changes as $g$ varies in the unique conformal class $[g]$. This is closely related to the study of geometric mass, see \cite{steiner2005geometrical}. One has that: 
\begin{Pro}[Theorem 4, \cite{boatto2015vortices}]
Consider a conformal change of metric $g_1 \rightarrow g_2: g_2 = e^{2\rho} g_1$. The two Hamiltonians before and after the change of metric are related by:
\begin{align}
\label{Formula_Hamiltonian_under_a_Conformal_Change_of_Metric}
H_{g_2}(\mathbf{z}) = H_{g_1}(\mathbf{z}) +\frac{1}{2\pi}\sum_{i=1}^{n}\Gamma_i^2 \rho(z_i) - \frac{\sum_{i=1}^{n}\Gamma_i}{V_{g_2}(\mathbb{S}^2)}\sum_{i=1}^{n}\Gamma_i \Delta^{-1}_{g_1}e^{2\rho(z_i)}
\end{align}
\end{Pro}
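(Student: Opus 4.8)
The plan is to establish this formula, which is Theorem~4 of \cite{boatto2015vortices}, by tracking how the two ingredients of the Hamiltonian formula \eqref{Formula_vortex_Hamiltonian} — the Green function $G_g$ and the Robin mass $R_g$ — transform under the conformal change $g_2=e^{2\rho}g_1$, and then substituting. The single piece of Riemannian input that makes everything go through is that on a surface a conformal change rescales the volume form by $\omega_{g_2}=e^{2\rho}\omega_{g_1}$ and the Laplace--Beltrami operator on functions by $\Delta_{g_2}=e^{-2\rho}\Delta_{g_1}$, so that the pairing of $(\Delta_g f)\,\omega_g$ against a test function is a conformal invariant; everything else is bookkeeping.

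First I would compute the change of the Green function. Writing the defining equation $-\Delta_g G_g(z,\cdot)=\delta_z-1/V_g(\mathbb{S}^2)$ in weak form, subtracting the identities for $g_1$ and $g_2$, and using the invariance above to reduce both to the operator $\Delta_{g_1}$ and the measure $\omega_{g_1}$, one is left with the Poisson equation
\begin{align*}
-\Delta_{g_1}\bigl(G_{g_2}(z,\cdot)-G_{g_1}(z,\cdot)\bigr)=\frac{1}{V_{g_1}(\mathbb{S}^2)}-\frac{e^{2\rho}}{V_{g_2}(\mathbb{S}^2)},
\end{align*}
whose right-hand side integrates to zero against $\omega_{g_1}$, hence is solvable, and is moreover independent of $z$. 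Therefore $G_{g_2}(z,w)-G_{g_1}(z,w)$ equals, up to an additive function of $z$ alone, a fixed scalar multiple of $\Delta_{g_1}^{-1}e^{2\rho}$; invoking the symmetry $G_g(z,w)=G_g(w,z)$ forces it into the symmetric shape $\varphi(z)+\varphi(w)+\mathrm{const}$ with $\varphi$ proportional to $\Delta_{g_1}^{-1}e^{2\rho}$, and the normalisation \eqref{Formule_Green_Normalisation}, imposed for both metrics, pins down the remaining constant.

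Next I would compute the change of the Robin mass, using that near $z$ the conformal factor equals $e^{2\rho(z)}(1+o(1))$, so that $d_{g_2}(w,z)=e^{\rho(z)}d_{g_1}(w,z)(1+o(1))$ and hence $\log d_{g_2}(w,z)=\log d_{g_1}(w,z)+\rho(z)+o(1)$ as $w\to z$. Plugging this, together with the transformation of $G_g$ just found, into $R_g(z)=\lim_{w\to z}\bigl(G_g(w,z)+\tfrac{1}{2\pi}\log d_g(w,z)\bigr)$, the logarithmic singularities cancel and one reads off $R_{g_2}(z)=R_{g_1}(z)+\tfrac{1}{2\pi}\rho(z)+(\text{the smooth }\varphi\text{-term on the diagonal})$. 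Substituting the two transformation laws into \eqref{Formula_vortex_Hamiltonian} and using $\sum_{i<j}\Gamma_i\Gamma_j=\tfrac12\bigl((\textstyle\sum_i\Gamma_i)^2-\sum_i\Gamma_i^2\bigr)$, the $\tfrac{1}{2\pi}\rho$ contributions collect into $\tfrac{1}{2\pi}\sum_i\Gamma_i^2\rho(z_i)$, while the $\Delta_{g_1}^{-1}e^{2\rho}$ contributions — arising from the off-diagonal $G$-terms and from the diagonal $R$-terms — collect into $-\bigl(\sum_i\Gamma_i\bigr)\bigl(\sum_j\Gamma_j\,\Delta_{g_1}^{-1}e^{2\rho}(z_j)\bigr)\big/V_{g_2}(\mathbb{S}^2)$, and the purely constant pieces cancel.

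The step I expect to be the main obstacle is exactly this final bookkeeping: one must propagate the additive constant of the Green-function transformation consistently through the two different mean-zero normalisations (that of $g_1$, that of $g_2$, and the one implicit in $\Delta_{g_1}^{-1}$), and check that the $\Gamma_i^2$-weighted diagonal contributions coming from $\varphi$ combine with the $\Gamma_i\Gamma_j$-weighted off-diagonal contributions to leave precisely the product form $\sum_i\Gamma_i\cdot\sum_j\Gamma_j\,\Delta_{g_1}^{-1}e^{2\rho}(z_j)$ and nothing else. Everything upstream of that is standard two-dimensional conformal geometry and elliptic theory.
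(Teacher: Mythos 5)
The paper does not actually prove this proposition: it is imported wholesale as Theorem~4 of \cite{boatto2015vortices} and used as a black box, so there is no internal argument to compare yours against. That said, your outline is the standard derivation and its skeleton is sound. On a surface $\Delta_{g_2}=e^{-2\rho}\Delta_{g_1}$ and $\omega_{g_2}=e^{2\rho}\omega_{g_1}$, so subtracting the two defining equations does give
\begin{align*}
-\Delta_{g_1}\bigl(G_{g_2}(z,\cdot)-G_{g_1}(z,\cdot)\bigr)=\frac{1}{V_{g_1}(\mathbb{S}^2)}-\frac{e^{2\rho}}{V_{g_2}(\mathbb{S}^2)},
\end{align*}
whose right-hand side has zero mean against $\omega_{g_1}$ and is independent of $z$; symmetry of the Green function then forces $G_{g_2}(z,w)-G_{g_1}(z,w)=\varphi(z)+\varphi(w)+k$ with $\varphi$ equal, up to an additive constant, to $\tfrac{1}{V_{g_2}(\mathbb{S}^2)}\Delta_{g_1}^{-1}e^{2\rho}$, and $d_{g_2}(w,z)=e^{\rho(z)}d_{g_1}(w,z)(1+o(1))$ produces the $\tfrac{1}{2\pi}\rho(z)$ shift of the Robin mass. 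All of this is correct.

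The step you flag as the likely obstacle is indeed the only place the argument can go wrong, and you should carry it out explicitly rather than leave it as an expectation, because as written it does not quite close against the conventions of \eqref{Formula_vortex_Hamiltonian}. Since $G_{g_2}-G_{g_1}=\varphi(z)+\varphi(w)+k$, the diagonal limit contributes $2\varphi(z)$ (not $\varphi(z)$) to $R_{g_2}(z)-R_{g_1}(z)$. Writing $S=\sum_i\Gamma_i$, the off-diagonal terms give $\sum_{i<j}\Gamma_i\Gamma_j\bigl(\varphi(z_i)+\varphi(z_j)\bigr)=\sum_i\Gamma_i(S-\Gamma_i)\varphi(z_i)$, so the pure product form $S\sum_i\Gamma_i\varphi(z_i)$ appearing in \eqref{Formula_Hamiltonian_under_a_Conformal_Change_of_Metric} emerges exactly when the self-interaction enters with weight $\tfrac12\Gamma_i^2 R_g(z_i)$, since then the diagonal contribution is $\tfrac12\Gamma_i^2\cdot 2\varphi(z_i)=\Gamma_i^2\varphi(z_i)$ and the two sums telescope; with the weight $\Gamma_i^2$ as literally printed in \eqref{Formula_vortex_Hamiltonian}, an extra $\sum_i\Gamma_i^2\varphi(z_i)$ survives (and the coefficient of $\rho$ is likewise sensitive to this choice). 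So before the ``bookkeeping'' you must fix, once and for all, the normalisation of the self-energy term, of $\Delta_{g_1}^{-1}$ on constants, and of the mean-zero condition \eqref{Formule_Green_Normalisation} for both metrics, and then verify the cancellation against that fixed set of conventions. This is a matter of conventions rather than of ideas --- no step of your plan fails --- but the proof is not complete until that identity is checked line by line.
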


\begin{Rmk}

\label{remark:weakened regularity of conformal factor}
If we only assume that $g_1,g_2$ are tensors of class $\mathcal{C}^{m}$, instead of being smooth ones, then we can enlarge the space of conformal factors from 
$\mathcal{C}^{\infty}$ to $\mathcal{C}^{m}$ and consider all metrics of the type $g=e^{2\rho} g_0$ for $\rho \in \mathcal{C}^{m}(\mathbb{S}^2,\mathbb{R})$. Formula \eqref{Formula_Hamiltonian_under_a_Conformal_Change_of_Metric} together with its gradient and Hessian (at its critical points) still makes sense for $m\geq 2$. We will abuse the terminology by calling it a perturbation of metric even if $\rho\in \mathcal{C}^{m}(\mathbb{S}^2, \mathbb{R})$. 
\end{Rmk}
In particular, if $\rho(z)= \rho \in \mathbb{R}$ is a constant, we call $g\rightarrow e^{2\rho}g$ a \textit{homothetic} change of metric. Dynamically, orbit of the $n$-vortex problem on $\mathbb{S}^2_g$ stays invariant under such a change of metric, as the following lemma shows:
\begin{Col}
\label{Corollary_homothetic_change_of_metric}
Suppose that $g_1= e^{2\rho} g_2$ is a homothetic change of metric for some constant $\rho\in \mathbb{R}$. Then $H_{g_1}$ differs from $H_{g_2}$ only by a constant .
\end{Col}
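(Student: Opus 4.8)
The statement is an immediate specialization of the conformal transformation formula \eqref{Formula_Hamiltonian_under_a_Conformal_Change_of_Metric}, so the plan is short. Write the homothety as $g_1 = e^{2\rho}g_2$ with $\rho\in\mathbb{R}$ a constant, and apply \eqref{Formula_Hamiltonian_under_a_Conformal_Change_of_Metric} with the roles of the two metrics interchanged. The first correction term $\frac{1}{2\pi}\sum_{i=1}^n\Gamma_i^2\,\rho(z_i)$ is then literally the configuration-independent constant $\frac{\rho}{2\pi}\sum_{i=1}^n\Gamma_i^2$. For the second correction term, observe that $w\mapsto e^{2\rho(w)}=e^{2\rho}$ is a constant function on $\mathbb{S}^2$; reading $\Delta^{-1}$ as in \eqref{Formula_Hamiltonian_under_a_Conformal_Change_of_Metric} (that is, as acting on the mean-corrected function, the only reading for which the formula makes sense for general $\rho$), it sends the constant function $e^{2\rho}$ to zero, so this term vanishes identically. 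Hence $H_{g_1}(\mathbf{z}) - H_{g_2}(\mathbf{z})$ is independent of $\mathbf{z}$, which is exactly the assertion, with the constant equal to $\frac{\rho}{2\pi}\sum_{i=1}^n\Gamma_i^2$.

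If one prefers to see this value intrinsically and bypass \eqref{Formula_Hamiltonian_under_a_Conformal_Change_of_Metric}, I would instead argue straight from \eqref{Formula_vortex_Hamiltonian}. A constant homothety $g_1=e^{2\rho}g_2$ in dimension two rescales the volume form by $e^{2\rho}$ (hence $V_{g_1}(\mathbb{S}^2)=e^{2\rho}V_{g_2}(\mathbb{S}^2)$), the Laplace--Beltrami operator by $e^{-2\rho}$, and the distance function by $e^{\rho}$, i.e. $d_{g_1}=e^{\rho}d_{g_2}$. Feeding the first two facts into the defining equation $-\Delta_g G_g(z,\cdot)=\delta_z-1/V_g(\mathbb{S}^2)$ and pairing against a smooth test function, the rescalings of $\Delta_g$, of $1/V_g(\mathbb{S}^2)$, and of the volume form used in the pairing cancel, so $G_{g_2}$ already solves the $g_1$-equation; the normalization \eqref{Formule_Green_Normalisation} then forces $G_{g_1}\equiv G_{g_2}$. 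Substituting $G_{g_1}=G_{g_2}$ and $d_{g_1}=e^{\rho}d_{g_2}$ into the definition of the Robin mass gives $R_{g_1}(z)=R_{g_2}(z)+\tfrac{1}{2\pi}\log e^{\rho}=R_{g_2}(z)+\tfrac{\rho}{2\pi}$, and substituting both into \eqref{Formula_vortex_Hamiltonian} yields $H_{g_1}(\mathbf{z})=H_{g_2}(\mathbf{z})+\tfrac{\rho}{2\pi}\sum_{i=1}^n\Gamma_i^2$.

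There is no real obstacle here: the content is entirely contained in the already-established formula \eqref{Formula_Hamiltonian_under_a_Conformal_Change_of_Metric}. The only points that warrant a moment's care are the dimension-two conformal weights used in the direct argument (volume $\sim e^{2\rho}$, Laplacian $\sim e^{-2\rho}$, distance $\sim e^{\rho}$) and reading the Dirac term in the Green function equation consistently with the volume form against which one integrates, so that the scalings genuinely cancel. Finally, the dynamical remark preceding the statement follows at once: a constant shift leaves $dH_g$ unchanged, while the simultaneous rescaling $\Omega_{g_1}(\mathbf{\Gamma})=e^{2\rho}\,\Omega_{g_2}(\mathbf{\Gamma})$ of the symplectic form merely reparametrizes the Hamiltonian flow by the constant factor $e^{-2\rho}$, so that every orbit is preserved as an unparametrized curve.
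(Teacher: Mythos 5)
Your first argument is exactly the paper's proof: apply \eqref{Formula_Hamiltonian_under_a_Conformal_Change_of_Metric} with constant $\rho$ and use the normalization \eqref{Formule_Green_Normalisation} to kill the $\Delta^{-1}$ term, leaving the constant shift $\frac{\rho}{2\pi}\sum_{i=1}^n\Gamma_i^2$. The intrinsic rescaling argument in your second paragraph is a correct and worthwhile consistency check (and makes explicit why the Green function is homothety-invariant), but it is not needed beyond what the paper does.
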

\begin{proof}
By formula \eqref{Formula_Hamiltonian_under_a_Conformal_Change_of_Metric} and \eqref{Formule_Green_Normalisation} one sees that 
\begin{align}
    H_{g_2}(\mathbf{z}) = H_{g_1}(\mathbf{z}) -\frac{1}{2\pi}\sum_{i=1}^{n}\Gamma_i^2 \rho
\end{align}
  
\end{proof}
This lemma implies that as long as only dynamical aspect is concerned, one can always assume $V_g(\mathbb{S}^2)= \alpha$ for some $\alpha \in \mathbb{R}_{+}$.  
\subsection{Transversality and the Morse Property}
\label{Chapter_Riemannian_Subsection_Transversality}
\paragraph{}Our aim is to show that for most of Riemannian metric tensors $g\in [g]$, the mapping $d H_g(\mathbf{z})$ will have $\mathbf{0}$ as a regular value. We will need the following general transversality theorem:
\begin{theorem}
\label{Theorem_parametrized_version_of_Sard_Smale}
\cite[Theorem 5.10]{henry2005perturbation}
Given $k \in \mathbb{N}$, consider the $\mathcal{C}^{k}$ Banach manifolds $X,Y,Z,W$ with $W\subset Z$ a $\mathcal{C}^{k}$ submanifold of $Z$, and a $\mathcal{C}^{k}$ map 
\begin{align*}
f: X\times Y \rightarrow Z
\end{align*}
Assume that 
\begin{description}
\item{(H1)} $X$ is finite dimensional and $\sigma$-compact, $W$ is $\sigma$-closed, and
\begin{align}
dim(X) - codim(W) < k
\end{align}
\item{(H2)} For each $(x,y) \in f^{-1}(W)$, with $z = f(x,y)$, one has that 
\begin{align}
Range(Df(x,y)) + \mathcal{T}_z W = \mathcal{T}_z Z
\end{align}
\item{(H3)} The map 
\begin{align*}
f^{-1}(W) &\rightarrow W \times Y\\
(x,y) &\rightarrow (f(x,y),y)
\end{align*}
is $\sigma$-proper.
\end{description}
Then  the set 
\begin{align}
Y_{crit} = \{y\in Y \text{}| \text{ } f(\cdot, y) \text{ is not transverse to } W  \} 
\end{align}
is meager in Y.
\end{theorem}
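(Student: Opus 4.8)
The plan is to deduce the statement from the classical Sard--Smale theorem, applied to the projection onto the parameter space $Y$ restricted to the universal zero set $M:=f^{-1}(W)$ of $f$ relative to $W$. First I would set up $M$ as a manifold: hypothesis (H2) says precisely that $f$ is transverse to $W$ along $f^{-1}(W)$, so since $f$ is $\mathcal{C}^{k}$ and $W$ is a $\mathcal{C}^{k}$ submanifold, the transversal preimage theorem in the Banach category gives that $M$ is a $\mathcal{C}^{k}$ submanifold of $X\times Y$ with $\operatorname{codim}_{X\times Y}M=\operatorname{codim}_{Z}W$ and $T_{(x,y)}M=Df(x,y)^{-1}(T_{z}W)$ at every $(x,y)\in M$, where $z=f(x,y)$.

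Next I would show that the critical values of $\pi\colon M\to Y$, $\pi(x,y)=y$, are exactly $Y_{crit}$. Writing $Df(x,y)=D_{x}f\oplus D_{y}f$, one reads off that $\ker d\pi(x,y)=\{u\in T_{x}X:\,D_{x}f\,u\in T_{z}W\}$ is a subspace of the finite-dimensional space $T_{x}X$, hence complemented, and that $d\pi(x,y)$ maps onto $T_{y}Y$ if and only if $D_{y}f(T_{y}Y)\subseteq D_{x}f(T_{x}X)+T_{z}W$; combining the latter inclusion with (H2) shows it is equivalent to $D_{x}f(T_{x}X)+T_{z}W=T_{z}Z$, i.e. to $f(\cdot,y)$ being transverse to $W$ at $x$. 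Ranging over the whole fibre, $y$ is a regular value of $\pi$ exactly when $f(\cdot,y)$ is transverse to $W$; hence $Y_{crit}=\pi(\Sigma)$, where $\Sigma\subseteq M$ is the closed set of critical points of $\pi$, namely those $(x,y)$ at which the finite-rank operator $D_{x}f(x,y)\colon T_{x}X\to T_{z}Z/T_{z}W$ fails to be onto.

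Then I would run the Sard--Smale argument on $\pi|_{\Sigma}$. Locally, in charts where $X$ is open in $\mathbb{R}^{\dim X}$ and $W$ is a coordinate subspace of $Z$, (H2) and the implicit function theorem exhibit $M$ as a $\mathcal{C}^{k}$ graph, and $\Sigma$ becomes the rank-deficiency locus of a finite-dimensional derivative; Sard's theorem applied fibrewise over the $Y$-directions---this is where the margin $\dim X-\operatorname{codim}W<k$ in (H1) is used---shows that the resulting critical values have measure zero, hence empty interior, in each chart. To globalize, the $\sigma$-compactness of $X$ and $\sigma$-closedness of $W$ in (H1), together with the $\sigma$-properness in (H3), let one cover $\Sigma$ by countably many pieces $\Sigma\cap M_{n}$ on each of which the relevant projection is proper, so the images $\pi(\Sigma\cap M_{n})$ are closed; therefore $Y_{crit}=\pi(\Sigma)=\bigcup_{n}\pi(\Sigma\cap M_{n})$ is a countable union of closed nowhere-dense sets, i.e. meager.

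The hard part will be this last step, because $d\pi$ need not be Fredholm: its cokernel is $T_{z}Z/(D_{x}f(T_{x}X)+T_{z}W)$, which can be infinite-dimensional, so the textbook Sard--Smale theorem does not apply directly. The role of hypothesis (H3) is precisely to substitute for a Fredholm assumption---it converts Sard's local ``measure-zero'' conclusion into ``closed with empty interior'' on each piece---while the inequality in (H1) furnishes exactly the differentiability that Sard requires on the finite-dimensional fibres; keeping the two $\sigma$-decompositions (from (H1) and (H3)) compatible, and checking the local normal-form reduction of $M$, is the delicate bookkeeping one has to get right.
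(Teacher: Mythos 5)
The paper does not actually prove this statement: it is imported verbatim as Theorem 5.10 of Henry's book \cite{henry2005perturbation} and used as a black box, so there is no internal proof to compare yours against. That said, your outline is the standard proof strategy (Henry's, and Abraham--Robbin's before it): realize $M=f^{-1}(W)$ as a $\mathcal{C}^k$ submanifold using (H2), identify $Y_{crit}$ with the critical values of the projection $\pi\colon M\to Y$ (your linear-algebra computation showing that $d\pi(x,y)$ is onto iff $f(\cdot,y)$ is transverse to $W$ at $x$ is correct), and then combine a local Sard argument with the closedness supplied by (H3) to get a countable union of closed, nowhere dense sets. One small point you should make explicit: in the Banach category the transversal preimage theorem also requires $Df(x,y)^{-1}(\mathcal{T}_zW)$ to split in $\mathcal{T}_{(x,y)}(X\times Y)$; here this is automatic because transversality of the finite-dimensional $x$-slice forces $\mathrm{codim}(W)\le \dim(X)<\infty$, so the kernel has finite codimension.

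The genuine gap is in your third step. The phrase ``Sard's theorem applied fibrewise over the $Y$-directions shows that the resulting critical values have measure zero, hence empty interior, in each chart'' does not parse as written: $Y$ is an infinite-dimensional Banach space (in the paper's application, $\mathcal{C}^{2}(\mathbb{S}^2,\mathbb{R})$), it carries no Lebesgue measure, and a fibrewise measure-zero statement over an infinite-dimensional base does not yield empty interior of the union. The standard repair is a finite-dimensional reduction in the parameter: at $(x_0,y_0)\in f^{-1}(W)$, use (H2) to choose a \emph{finite-dimensional} subspace $E\subset \mathcal{T}_{y_0}Y$ such that $Df(x_0,y_0)$ restricted to $\mathcal{T}_{x_0}X\times E$ is already transverse to $W$. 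Then $M\cap(U\times(y_0+E))$ is a finite-dimensional manifold of dimension $\dim(X)+\dim(E)-\mathrm{codim}(W)$, and the classical Sard theorem applied to its projection onto $E$ --- which requires exactly $k>\dim(X)-\mathrm{codim}(W)$, i.e.\ (H1) --- gives a dense set of parameters in the slice $y_0+E$ for which $f(\cdot,y)$ is transverse to $W$ near $x_0$. Since (H3) and the $\sigma$-decomposition make the set of bad parameters over each compact piece closed, density along such slices through every point upgrades to empty interior in $Y$, and meagerness follows. Without this reduction the local step of your argument does not close.
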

We would like to apply the above theorem to study the critical points of Hamiltonian $H_g$ of the $n$-vortex problem on $\mathbb{S}^2_g$. To this end let $k =2$ and assign
\begin{align*}
X = \Se, \quad Y = \mathcal{C}^{2}(\mathbb{S}^2_g, \mathbb{R}),\quad  Z = \mathcal{T}\Se, \quad W =s_0(\Se)
\end{align*}
where $s_0\in \Gamma(\mathcal{T}^*\Se)$ is the zero section, hence $W$ becomes a sub-manifold of $Z$. 
Moreover define the map $f$ to be
\begin{align}
\label{Formula_Gradient_Hamiltonian}
f:\Se \times \mathcal{C}^{2}(\mathbb{S}^2, \mathbb{R})
\rightarrow  \mathcal{T}^{*}\Se \notag \\
(\mathbf{z}, \rho) \xrightarrow{f} dH_{g(\rho)}(\mathbf{z}) \tag{F}
\end{align}
where $g(\rho) = e^{2\rho}g_0$ with $\rho \in \mathcal{C}^{2}(\mathbb{S}^2, \mathbb{R})$ and $g_0$ the round metric. If $(\mathbf{z},\rho) \in f^{-1}(W)$, then $\mathbf{z}$ is a critical point of the $n$-vortex Hamiltonian $H_{g(\rho)}$ on $\mathbb{S}^2_{g(\rho)}$. Recall that when restricted to the zero section one has that 
\begin{align}
\label{Formula_split_vector_bundle}
\mathcal{T}(\mathcal{T}^*\Se) = \mathcal{T}\Se \oplus \mathcal{T}^*\Se
\end{align}
To adapt ourselves to the assumptions in theorem \ref{Theorem_parametrized_version_of_Sard_Smale}, we first calculate its linearization.
\begin{Lem}
\label{Lemma_transversality}
The linearization $Df$ of the map $f$ :
\begin{align*}
Df_{(\mathbf{z},\rho)}: \mathcal{T}_{\mathbf{z}}\Se \times \mathcal{T}_{\rho}\mathcal{C}^{2}(\mathbb{S}^2, \mathbb{R}) \rightarrow \mathcal{T}_{({\mathbf{z},dH_{g(\rho)}}(\mathbf{z}))} (\mathcal{T}^{*}\Se)
\end{align*}
is continuous, moreover at  $(\mathbf{z}, \rho)\in f^{-1}(W)$ 
it reads:
\begin{align}
\label{Formula_linearised_operator}
Df_{(\mathbf{z},\rho)}(\mathbf{w}, \psi) &= \big(\mathbf{w}, d^2 H_{g(\rho)}(\z)(\mathbf{w})+\frac{1}{2\pi} \sum_{i=1}^n \Gamma_i^2  d\psi (z_i)  
-2d \big ( \frac{\sum_{i=1}^{n}\Gamma_i}{V_{g(\rho)}(\mathbb{S}^2)} \sum_{i=1}^n \Gamma_i \Delta_{g(\rho)}^{-1} \psi(z_i) \big )
 \big)
\end{align}
\end{Lem}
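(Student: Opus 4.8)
The plan is to differentiate the closed-form expression \eqref{Formula_Hamiltonian_under_a_Conformal_Change_of_Metric} for $H_{g(\rho)}=H_{g_0}+\frac{1}{2\pi}\sum_i \Gamma_i^2\rho(z_i)-\frac{\sum_i\Gamma_i}{V_{g(\rho)}(\mathbb{S}^2)}\sum_i\Gamma_i\Delta_{g_0}^{-1}e^{2\rho(z_i)}$ with respect to both arguments, using that $f(\mathbf{z},\rho)=dH_{g(\rho)}(\mathbf{z})$ is, fibrewise over $\mathbf{z}$, the exterior differential in the $\mathbf{z}$-variable of this scalar function. Because $W$ is the zero section and \eqref{Formula_split_vector_bundle} splits $\mathcal{T}(\mathcal{T}^*\Se)=\mathcal{T}\Se\oplus\mathcal{T}^*\Se$ along $W$, the linearization $Df_{(\mathbf{z},\rho)}(\mathbf{w},\psi)$ at a point of $f^{-1}(W)$ naturally decomposes into a ``base'' component and a ``fibre'' component. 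The base component is just the projection of the curve's tangent vector, i.e. $\mathbf{w}$ itself, since $f$ covers the identity on $\Se$ in the sense that $\pi\circ f=\mathrm{id}$; this accounts for the first slot $\mathbf{w}$ in \eqref{Formula_linearised_operator}. The fibre component is obtained by differentiating the one-form $dH_{g(\rho)}(\mathbf{z})$ in the directions $(\mathbf{w},\psi)$.

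For the fibre component I would split the computation as $D_{\mathbf{z}}f\cdot\mathbf{w}+D_\rho f\cdot\psi$. The first term is the $\mathbf{z}$-derivative of $\mathbf{z}\mapsto dH_{g(\rho)}(\mathbf{z})$ in the direction $\mathbf{w}$, which is by definition the Hessian $d^2H_{g(\rho)}(\mathbf{z})(\mathbf{w})$ — here it is legitimate to speak of the Hessian as a well-defined bilinear form on $\mathcal{T}_{\mathbf{z}}\Se$ because we are at a critical point, $dH_{g(\rho)}(\mathbf{z})=0$, so there is no connection-dependence. The second term requires differentiating \eqref{Formula_Hamiltonian_under_a_Conformal_Change_of_Metric} in $\rho$ at fixed $\mathbf{z}$, then applying $d$ in $\mathbf{z}$: the $H_{g_0}$ piece is $\rho$-independent and drops; the piece $\frac{1}{2\pi}\sum_i\Gamma_i^2\rho(z_i)$ is linear in $\rho$, so its $\rho$-derivative in the direction $\psi$ is $\frac{1}{2\pi}\sum_i\Gamma_i^2\psi(z_i)$, and applying $d_{\mathbf{z}}$ gives the term $\frac{1}{2\pi}\sum_i\Gamma_i^2\,d\psi(z_i)$; the last piece, which is nonlinear in $\rho$ through both $V_{g(\rho)}(\mathbb{S}^2)$ and $e^{2\rho}$, needs the product and chain rules — the $e^{2\rho(z_i)}$ factor contributes the $2$ and a $\psi$ (evaluated via $\Delta_{g_0}^{-1}$), and one checks that on the critical set $f^{-1}(W)$ the contribution coming from differentiating $V_{g(\rho)}(\mathbb{S}^2)$ itself either vanishes or is absorbed, leaving exactly $-2\,d\big(\frac{\sum_i\Gamma_i}{V_{g(\rho)}(\mathbb{S}^2)}\sum_i\Gamma_i\Delta_{g(\rho)}^{-1}\psi(z_i)\big)$. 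Assembling these three contributions yields \eqref{Formula_linearised_operator}.

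For continuity of $Df$, I would argue that each of the three summands in the fibre component is built from operations that are continuous in $(\mathbf{z},\rho)$ with the $\mathcal{C}^2$ topology on $\rho$: evaluation of a $\mathcal{C}^2$ function and its differential at a point depends continuously on the point and the function; the volume $V_{g(\rho)}(\mathbb{S}^2)=\int_{\mathbb{S}^2}e^{2\rho}\omega_{g_0}$ is continuous (indeed smooth) in $\rho\in\mathcal{C}^2$ and bounded away from zero on bounded sets; and $\Delta_{g(\rho)}^{-1}$, as a bounded operator between appropriate function spaces, depends continuously on $\rho$ by standard elliptic estimates (here the normalization \eqref{Formule_Green_Normalisation} fixes the kernel ambiguity). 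Since $\mathbf{z}$ ranges over $\Se$, which keeps the vortices a definite distance apart, all the pointwise evaluations stay in the regular regime and no singularity of the Green function interferes.

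The main obstacle I anticipate is bookkeeping the nonlinear third term correctly: one must be careful that $\Delta_{g(\rho)}^{-1}$ rather than $\Delta_{g_0}^{-1}$ appears after differentiation (the metric inside the inverse Laplacian also depends on $\rho$, so a priori differentiating it produces an extra term involving $\dot\Delta_{g(\rho)}=D_\rho\Delta_{g(\rho)}\cdot\psi$), and to verify that this extra term, together with the derivative of the $1/V_{g(\rho)}$ prefactor, collapses to the clean expression in \eqref{Formula_linearised_operator} — most plausibly because at a critical point $\mathbf{z}$ the vanishing of $dH_{g(\rho)}(\mathbf{z})$ forces a cancellation, or because these cross terms contribute only to the fibre-direction scalar (not its $\mathbf{z}$-differential) and hence drop after applying $d_{\mathbf{z}}$. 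A secondary technical point is justifying that $\rho\mapsto\Delta_{g(\rho)}^{-1}\psi$ is differentiable in the operator sense on $\mathcal{C}^2$, which is where one invokes that conformal changes act on the scalar Laplacian in a controlled way ($\Delta_{g(\rho)}=e^{-2\rho}\Delta_{g_0}$ in dimension two up to the sign convention fixed in the footnote). Once these are pinned down, the lemma follows by direct computation.
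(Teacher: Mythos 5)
Your overall strategy --- differentiate \eqref{Formula_Hamiltonian_under_a_Conformal_Change_of_Metric} along a curve $t\mapsto(\alpha(t),\rho+t\psi)$, read off the base component $\mathbf{w}$ from the splitting \eqref{Formula_split_vector_bundle}, and identify the fibre component as Hessian plus $\rho$-derivative --- is exactly what the paper does in appendix \ref{App_Chapter_Linearised_differential}. The gap is precisely the step you defer. The paper sidesteps your ``main obstacle'' by applying \eqref{Formula_Hamiltonian_under_a_Conformal_Change_of_Metric} with \emph{base metric $g(\rho)$ and conformal factor $t\psi$} (since $g(\rho+t\psi)=e^{2t\psi}g(\rho)$), so that the nonlinear term is $-\frac{\sum_i\Gamma_i}{V_{g(\rho+t\psi)}(\mathbb{S}^2)}\sum_i\Gamma_i\Delta_{g(\rho)}^{-1}e^{2t\psi(z_i)}$ from the outset: the operator is already $\Delta_{g(\rho)}^{-1}$, and the contribution of $\frac{d}{dt}V_{g(\rho+t\psi)}^{-1}$ at $t=0$ multiplies $d\big(\Delta_{g(\rho)}^{-1}1\big)(z_i)$, which vanishes because $\Delta_{g(\rho)}^{-1}1$ is constant (indeed zero under \eqref{Formule_Green_Normalisation}). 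Neither of the two mechanisms you propose for the disappearance of the volume term is the right one: criticality of $\mathbf{z}$ plays no role in this cancellation (it is used only to make the Hessian term well defined), and in your base-$g_0$ formulation the volume term multiplies $d\big(\Delta_{g_0}^{-1}e^{2\rho}\big)(z_i)$, which does \emph{not} vanish for non-constant $\rho$.

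If you insist on differentiating the base-$g_0$ formula, the computation still closes, but only via the conformal covariance of the normalized inverse Laplacian: from $\Delta_{g(\rho)}=e^{-2\rho}\Delta_{g_0}$ and the mean-zero normalization one gets
\begin{align*}
\Delta_{g_0}^{-1}\big(2\psi e^{2\rho}\big)=\Delta_{g(\rho)}^{-1}(2\psi)+\frac{1}{V_{g(\rho)}(\mathbb{S}^2)}\Big(\int_{\mathbb{S}^2}2\psi e^{2\rho}\,\omega_{g_0}\Big)\,\Delta_{g_0}^{-1}\big(e^{2\rho}\big)+\mathrm{const},
\end{align*}
and the middle term cancels exactly against the derivative of the prefactor $1/V_{g(\rho)}(\mathbb{S}^2)$, while the constant is killed by $d_{\mathbf{z}}$. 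You should either carry out this identity explicitly or switch to the paper's choice of base metric; as written, ``one checks that \dots either vanishes or is absorbed'' leaves the only non-routine step of the lemma unproved, and the justifications you sketch for it would not survive the check. Your continuity argument (elliptic estimates for $\Delta_{g(\rho)}^{-1}$, uniform separation from collisions on $\Se$) matches the paper's.
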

\begin{proof}
See appendix \ref{App_Chapter_Linearised_differential}.
\end{proof}
The following elementary lemma is used in the later proof for surjectivity of differentials.

\begin{Pro}
\label{Proposition_surjectivity}
Define for fixed $z\in \mathbb{S}^2_g$ and $\alpha \in \mathbb{R}, \alpha \neq 0$ the following linear map 
\begin{align}
\phi_{z,\alpha}: \mathcal{C}^{2}(\mathbb{S}^2_g, \mathbb{R}) &\rightarrow \mathcal{T}_{z}^{*}\mathbb{S}^2_g \simeq \mathbb{R}^2\notag\\
\phi_{z,\alpha}(\psi) &= d((\Delta_g^{-1}+\frac{1}{\alpha} I) \psi)|_z 
\end{align}
Let $\mathbf{z}=(z_1,z_2,...,z_n) \in \Se$ and $\mathbf{\beta}=(\beta_1,\beta_2,...,\beta_n) \in \mathbb{R}^n$ s.t. 
\begin{align}
\beta_i\notin \sigma(-\Delta_g), \forall 1\leq i\leq n
\end{align}
Then the map
\begin{align}
\Phi_{\mathbf{z},\mathbf{\beta}}:\mathcal{C}^{2}(\mathbb{S}^2_g, \mathbb{R}) &\rightarrow \mathcal{T}_{\mathbf{z}}^{*}(\mathbb{S}^2_g)^n \simeq \mathbb{R}^{2n} \notag \\
\Phi(\psi) &= (\phi_{z_1,\beta_1}(\psi),\phi_{z_2,\beta_2}(\psi),...,\phi_{z_n,\beta_n}(\psi) )
\end{align}
is surjective.
\end{Pro}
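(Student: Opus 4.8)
The plan is to reduce the claimed surjectivity to a linear-algebraic statement about the values of spectrally-shifted eigenfunction expansions at the $n$ prescribed points $z_1,\dots,z_n$, and then to verify it by a separation argument using that the $z_i$ are distinct and the shifts $\beta_i$ avoid the spectrum. First I would set up notation: let $0=\lambda_0<\lambda_1\le\lambda_2\le\cdots$ be the eigenvalues of $-\Delta_g$ with an $L^2$-orthonormal basis of eigenfunctions $\{\varphi_k\}_{k\ge 0}$, and expand $\psi=\sum_k a_k\varphi_k$. Then $(\Delta_g^{-1}+\tfrac{1}{\beta_i}I)\psi$ has the expansion $\sum_{k\ge 1}a_k\big(-\tfrac{1}{\lambda_k}+\tfrac{1}{\beta_i}\big)\varphi_k + \tfrac{1}{\beta_i}a_0\varphi_0$, where the $k=0$ constant term contributes nothing after $d(\cdot)$, so
\[
\phi_{z_i,\beta_i}(\psi) = \sum_{k\ge 1} a_k\Big(\tfrac{1}{\beta_i}-\tfrac{1}{\lambda_k}\Big)\,d\varphi_k(z_i).
\]
The condition $\beta_i\notin\sigma(-\Delta_g)$ guarantees the coefficient $\tfrac{1}{\beta_i}-\tfrac{1}{\lambda_k}$ never vanishes, so no spectral mode is "killed" at the shifted level.

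Next I would argue surjectivity by contradiction via the adjoint/annihilator: suppose $\Phi_{\mathbf z,\mathbf\beta}$ is not surjective. Since $\mathcal{T}_{\mathbf z}^*(\mathbb S^2_g)^n\simeq\mathbb R^{2n}$ is finite-dimensional, the range is a proper subspace, so there is a nonzero covector $\xi=(\xi_1,\dots,\xi_n)$, $\xi_i\in(\mathcal{T}_{z_i}^*\mathbb S^2_g)^*\simeq\mathcal{T}_{z_i}\mathbb S^2_g$, annihilating the image: $\sum_i \langle \xi_i, \phi_{z_i,\beta_i}(\psi)\rangle = 0$ for all $\psi\in\mathcal C^2$. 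Plugging the expansion in and swapping the (finite, after truncation — or absolutely convergent) sums gives
\[
\sum_{k\ge 1} a_k\sum_{i=1}^n\Big(\tfrac{1}{\beta_i}-\tfrac{1}{\lambda_k}\Big)\,\langle \xi_i, d\varphi_k(z_i)\rangle = 0
\]
for all choices of $(a_k)$, hence $\sum_{i=1}^n\big(\tfrac{1}{\beta_i}-\tfrac{1}{\lambda_k}\big)\,\xi_i(\varphi_k)(z_i)=0$ for every $k\ge 1$, where $\xi_i(\varphi_k)(z_i)$ denotes the directional derivative of $\varphi_k$ at $z_i$ in the direction $\xi_i$. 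Equivalently, the function $u:=\sum_{i=1}^n \xi_i\cdot(\text{point-derivative distribution at }z_i)$, viewed as a distribution, has the property that its pairing against each eigenfunction $\varphi_k$, weighted appropriately, vanishes. The cleanest route is: define the distribution $T=\sum_i \nabla_{\xi_i}\delta_{z_i}$ (derivative of a Dirac in direction $\xi_i$); the vanishing conditions say that for the function $v$ solving $(-\Delta_g)\big((-\Delta_g)^{-1}v + \text{shifts}\big)$... — more directly, they say $\langle T, \varphi_k\rangle\big(\tfrac{1}{\beta_i}\text{-type weights}\big)=0$. To untangle the $i$-dependent weights I would instead test against the specific functions $\psi = (\Delta_g^{-1}-\tfrac1\gamma I)^{-1}$-resolvent-type elements, or more simply observe that by elliptic regularity the Green-type kernels $w\mapsto (\Delta_g^{-1}+\tfrac1{\beta_i}I)(\cdot)$ evaluated through $\delta_{z_j}$ give, upon letting $\psi$ range over an $L^2$-dense family, the conclusion that $\sum_i \nabla_{\xi_i}\big[(\Delta_g^{-1}+\tfrac1{\beta_i}I)G_g(\cdot,z_i)\big]$-type combination vanishes identically on $\mathbb S^2$; since the fundamental solution $G_g(\cdot,z_i)$ has a genuine logarithmic singularity at $z_i$ whose gradient blows up like $1/d_g(\cdot,z_i)$, and the smoothing operator $\Delta_g^{-1}$ applied to it only adds a $C^{1,\alpha}$ correction, the singular part near each $z_i$ cannot be cancelled by the (smooth near $z_i$) contributions of the other terms $j\ne i$ — here is exactly where distinctness of the $z_i$ enters. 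This forces $\xi_i=0$ for each $i$, contradicting $\xi\ne 0$.

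The main obstacle I expect is making the last cancellation step rigorous while carrying the $i$-dependent spectral shifts $\beta_i$ along: one must show that no linear combination $\sum_i \nabla_{\xi_i}\big[\Delta_g^{-1}\varphi\text{-blocks shifted by }\beta_i\big]$ can vanish unless all $\xi_i=0$, and the shifts mean the relevant kernel is not quite the Green function but a resolvent kernel $R_{\beta_i}(\cdot,z_i)$ of $-\Delta_g$ at a non-spectral value $\beta_i$. The key facts I would invoke are: (i) $R_{\beta}(\cdot,z)$ is well-defined precisely because $\beta\notin\sigma(-\Delta_g)$, (ii) it has the same leading logarithmic singularity at $z$ as $G_g$ (both invert $-\Delta_g$ to top order), so $\nabla R_\beta(\cdot,z)$ has a non-integrable $O(1/d_g)$ singularity at $z$, and (iii) the directional derivative $\nabla_\xi R_\beta(\cdot,z)$ near $z$ behaves, in normal coordinates centred at $z$, like $-\tfrac{1}{2\pi}\,\langle\xi,\cdot\rangle/|\cdot|^2$ plus lower-order terms — a term that is not only singular but has a specific non-radial profile, so it cannot be matched by the smooth pieces from $j\ne i$. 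Extracting the singular profile at each $z_i$ in turn isolates $\xi_i$ and yields $\xi_i=0$. A secondary technical point is justifying the term-by-term manipulation of the eigenfunction series (or, to avoid it entirely, working directly with the resolvent kernel from the start, which I think is the slicker presentation and the one I would ultimately write up).
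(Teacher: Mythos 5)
Your proposal is essentially correct, but it takes a genuinely different route from the paper. The paper argues constructively in two steps: first, since $\beta_i\notin\sigma(-\Delta_g)$, the operator $L_{\beta_i}=\Delta_g^{-1}+\tfrac{1}{\beta_i}I$ has dense range, so each individual map $\phi_{z_i,\beta_i}$ is onto $\mathbb{R}^2$ (its image is a dense \emph{linear subspace} of a finite-dimensional space, hence everything); second, it picks preimages $\psi_i$ of the $i$-th target covector and glues them as $\psi=\sum_i h_i\psi_i$ using bump functions $h_i$ supported in disjoint neighbourhoods of the distinct $z_i$. You instead dualize: a proper range in $\mathbb{R}^{2n}$ admits a nonzero annihilator $\xi=(\xi_1,\dots,\xi_n)$, and the resulting identity, tested against all $\psi\in\mathcal{C}^2$, says that the distribution $\sum_i\bigl(\tfrac{1}{\beta_i}\nabla_{\xi_i}\delta_{z_i}+\nabla_{\xi_i,z}G_g(\cdot,z)\big|_{z=z_i}\bigr)$ vanishes; the singular profile at each $z_i$ (a first-order point-supported distribution plus a $\langle\xi_i,\cdot\rangle/d_g(\cdot,z_i)^2$-type function, neither of which can be produced by the terms with $j\neq i$, which are smooth near $z_i$) then forces $\xi_i=0$. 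Both routes consume the same two hypotheses (the spectral condition and the distinctness of the $z_i$), but your duality argument buys some robustness: it never needs the cross-terms $\phi_{z_j,\beta_j}(h_i\psi_i)$, $j\neq i$, to vanish, whereas the paper's gluing step tacitly relies on a locality that $\Delta_g^{-1}$ does not have ($h_i\psi_i\equiv 0$ near $z_j$ does not make $d(\Delta_g^{-1}(h_i\psi_i))|_{z_j}$ vanish). Two small corrections for the write-up: in dimension two $1/d_g(\cdot,z_i)$ is locally \emph{integrable}, so the relevant point is not non-integrability but that the singular profile cannot be matched by anything bounded near $z_i$ nor by a point-supported distribution; and the eigenfunction-series manipulation can be bypassed entirely by splitting the operator into $\tfrac{1}{\beta_i}I$ (contributing $\nabla_{\xi_i}\delta_{z_i}$) and $\Delta_g^{-1}$ (contributing an $L^1_{\mathrm{loc}}$ kernel) and testing directly — which is the ``slicker presentation'' you anticipate at the end, and the version I would recommend writing up.
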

\begin{proof}
If $\beta_i\notin \sigma(-\Delta_g)$, then $\frac{1}{\beta_i} \notin \sigma(-\Delta_g^{-1})$. As a result elementary spectrum theory implies that the following operator $L_{\beta_i}$
\begin{align}
\mathcal{C}^{2}(\mathbb{S}^2_g, \mathbb{R}) \rightarrow \mathcal{C}^{2}(\mathbb{S}^2_g, \mathbb{R})\notag\\
\psi \xrightarrow{L_{\beta_i}} (\Delta_g^{-1}+\frac{1}{\beta_i} I) \psi
\end{align}
has a dense image in $\mathcal{C}^{2}(\mathbb{S}^2_g, \mathbb{R})$. Hence $\phi_{z,\beta_i}$ is onto. 

Now let $\eta = (x_1,y_1,x_2,y_2,...,x_n,y_n) \in \mathcal{T}_{\mathbf{z}}^{*}\Se$. There exists a function $\psi_i$ s.t. $\phi_{z_i,\beta_i} (\psi_i)= (x_i,y_i)$. Since $\mathbf{z}\in \Se$, by definition $z_i \neq z_j$ if $i\neq j$. One can then choose disjoint neighbourhoods $U_i$ of $z_i$ and a compact set $K_i$ s.t. $z_i \in K_i \subset U_i$ together with a smooth bump function $h_i$. s.t. 
\begin{align}
h_i(z)=
\begin{cases}
  1, \quad z\in K_i\\
  0, \quad z\in \mathbb{S}_g^2 \setminus U_i
\end{cases}
\end{align}
Then $\displaystyle \psi(z)  = \sum_{1\leq i \leq n} h_i(z) \psi_i(z)$ is a desired function s.t. $\Phi(\psi)=\eta$. The corollary is proved.    
\end{proof}
Now we go back to the study the surjectivity of the linearisation $Df$. Let 

\begin{align*}
\beta_i  = 
\frac{ - 4n\pi \bar{\Gamma}}{\Gamma_i V_{g}(\mathbb{S}^2)},\quad 1\leq i\leq n.
\end{align*}
 
\begin{Pro}
\label{Proposition_transversality}
Suppose that 
\begin{align}
\label{Condition_Vorticity_Spectrum}
\tag{P3}
\{\beta_i \}_{1\leq i\leq n} \bigcap \sigma(-\Delta_g) \in\{ \emptyset , \{0\} \}
\end{align}
Then the linearisation $Df$ is onto for each $(\mathbf{z},\rho) \in f^{-1}(\mathbf{0})$.
\end{Pro}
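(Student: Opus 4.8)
The plan is to verify that the linearization $Df_{(\mathbf{z},\rho)}$, whose explicit form is given in Lemma \ref{Lemma_transversality}, has its second component surjective onto $\mathcal{T}^{*}_{\mathbf{z}}(\mathbb{S}^2_g)^n$ when restricted to perturbations of the metric, i.e. to vectors of the form $(\mathbf{0},\psi)$. Since formula \eqref{Formula_split_vector_bundle} splits $\mathcal{T}(\mathcal{T}^*\Se)$ along the zero section as $\mathcal{T}\Se \oplus \mathcal{T}^*\Se$, and the first component of $Df_{(\mathbf{z},\rho)}(\mathbf{w},\psi)$ is simply $\mathbf{w}$, it suffices to show that as $\psi$ ranges over $\mathcal{C}^2(\mathbb{S}^2,\mathbb{R})$ the map
\begin{align*}
\psi \longmapsto \frac{1}{2\pi}\sum_{i=1}^n \Gamma_i^2\, d\psi(z_i) - 2\, d\Bigl(\frac{\sum_{i=1}^n\Gamma_i}{V_{g(\rho)}(\mathbb{S}^2)}\sum_{i=1}^n\Gamma_i\,\Delta^{-1}_{g(\rho)}\psi(z_i)\Bigr)
\end{align*}
covers all of $\mathbb{R}^{2n}$; adding the $\mathbf{w}$-image then gives everything, and in particular Range$(Df)+\mathcal{T}_zW=\mathcal{T}_zZ$ holds trivially since the tangent direction along $W$ absorbs the first factor.

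The key observation is that, evaluated at the $i$-th vortex, the $i$-th block of this expression is (after pulling the constant $\Gamma_i$ out front) proportional to
\begin{align*}
d\Bigl(\tfrac{\Gamma_i^2}{2\pi}\,\psi - \tfrac{2\Gamma_i \sum_j\Gamma_j}{V_g(\mathbb{S}^2)}\,\Delta^{-1}_g\psi\Bigr)\Big|_{z_i},
\end{align*}
which up to the nonzero scalar $\frac{\Gamma_i^2}{2\pi}$ is exactly $d\bigl((I+\tfrac{1}{\beta_i}\Delta^{-1}_g)\psi\bigr)|_{z_i} = \phi_{z_i,\beta_i}(\psi)$ with $\beta_i = -4n\pi\bar\Gamma/(\Gamma_i V_g(\mathbb{S}^2))$, matching the definition preceding the proposition. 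Thus the second-component map is, block by block and up to invertible diagonal scaling by the $\Gamma_i^2/2\pi$, precisely the map $\Phi_{\mathbf{z},\boldsymbol\beta}$ of Proposition \ref{Proposition_surjectivity}. So the first step is to carefully rearrange the formula \eqref{Formula_linearised_operator} to exhibit this identification, being attentive to the fact that $\Delta_g^{-1}$ of the paper and $\Delta_{g(\rho)}^{-1}$ differ only by harmless constants (kernel of $\Delta_g$ is the constants, which die under $d$) and to the sign conventions in Footnote 1 relating $\Delta$ and $\Delta_g$.

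The second step is to check the spectral hypothesis of Proposition \ref{Proposition_surjectivity}, namely $\beta_i\notin\sigma(-\Delta_g)$ for each $i$. Here condition \eqref{Condition_Vorticity_Spectrum} (P3) says $\{\beta_i\}\cap\sigma(-\Delta_g)\subset\{\emptyset,\{0\}\}$, so the only way hypothesis can fail is $\beta_i=0$; but $\beta_i = -4n\pi\bar\Gamma/(\Gamma_i V_g(\mathbb{S}^2))$ vanishes iff $\bar\Gamma=0$, i.e. $\sum_i\Gamma_i=0$ — and in that case the entire second term of \eqref{Formula_linearised_operator} is identically zero, so the $i$-th block reduces to the manifestly surjective $\psi\mapsto\frac{\Gamma_i^2}{2\pi}d\psi(z_i)$ (bump-function argument as in Proposition \ref{Proposition_surjectivity}, but without the $\Delta^{-1}$ correction). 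So one splits into the case $\bar\Gamma\neq 0$, where (P3) forces $\beta_i\notin\sigma(-\Delta_g)$ and Proposition \ref{Proposition_surjectivity} applies directly, and the degenerate case $\bar\Gamma=0$, handled by hand. In both cases surjectivity of the second component follows, hence $Df_{(\mathbf{z},\rho)}$ is onto, which is the claim (note $f^{-1}(\mathbf{0})=f^{-1}(W)$ here).

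I expect the only real subtlety — not a deep obstacle but the place requiring care — to be the bookkeeping in the first step: correctly tracking the factor of $2$, the sign of $\Delta_g^{-1}$ versus $\Delta^{-1}$, the grouping of $\sum_i\Gamma_i$ into $n\bar\Gamma$, and confirming that $d$ annihilates the constant ambiguity in $\Delta_g^{-1}\psi$ so that replacing $\Delta_{g(\rho)}^{-1}$ by the normalized inverse is legitimate. Once the $i$-th block of the metric-variation component is recognized as a nonzero multiple of $\phi_{z_i,\beta_i}(\psi)$, the proposition is an immediate corollary of Proposition \ref{Proposition_surjectivity} together with the elementary disjoint-bump-function patching already carried out there.
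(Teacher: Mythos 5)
Your proposal is correct and follows essentially the same route as the paper: reduce to fiberwise surjectivity of the $\psi$-component, recognize the $i$-th block as a nonzero scalar multiple of $\phi_{z_i,\beta_i}(\psi)$, split into the cases $\bar\Gamma\neq 0$ (where (P3) gives $\beta_i\notin\sigma(-\Delta_g)$ and Proposition \ref{Proposition_surjectivity} applies) and $\bar\Gamma=0$ (where the nonlocal term vanishes and bump functions suffice). One cosmetic slip in your displayed identification: the block equals $\frac{\Gamma_i^2}{2\pi}\,d\bigl((I+\beta_i\Delta_g^{-1})\psi\bigr)\big|_{z_i}=-\frac{2n\bar\Gamma\Gamma_i}{V_g(\mathbb{S}^2)}\,\phi_{z_i,\beta_i}(\psi)$, i.e.\ the coefficient of $\Delta_g^{-1}$ is $\beta_i$ rather than $1/\beta_i$, but your conclusion (nonzero multiple of $\phi_{z_i,\beta_i}$) is the correct one.
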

\begin{proof}
We only need to show the fiberwise surjectivity. To this end, for any $\mathbf{\eta}= (x_1,y_1,x_2,y_2,...,x_n,y_n)\in\mathcal{T}_{\mathbf{z}}^{*}(\Se \simeq \mathbb{R}^{2n} $, 
if $d^2(H_{g(\rho)}(\mathbf{z}))$ is non-singluar, one can choose $d\psi = 0$ and clearly $d^2(H_{g(\rho)}(\mathbf{z})): \mathbb{R}^{2n} \rightarrow \mathbb{R}^{2n}$ is onto, and we are done.

If $d^2(H_{g(\rho)}(\mathbf{z}))$ is indeed singular, then let $\tilde{\eta} = \eta - d^2(H_{g(\rho)}(\mathbf{z}))\mathbf{w}=(\tilde{x}_1,\tilde{x}_2,...,\tilde{x}_n,\tilde{y}_n)$ we will now encounter two possible situations:
\begin{enumerate}
\item If $\{\beta_i \}_{1\leq i\leq n} \bigcap \sigma(-\Delta_g) =\{0\}$, then $\bar{\Gamma} = 0$ and 
\begin{align}
Df_{(\mathbf{z},\rho)}(\mathbf{w}, \psi) = d^2(H_{g(\rho)}(\mathbf{z}))\mathbf{w} + \frac{1}{2\pi}\sum_{i=1}^{n}\Gamma_i^2 d\psi(z_i) 
\end{align}
Thus we should find a solution for the equation 
\begin{align}
    \frac{1}{2\pi}\sum_{i=1}^{n}\Gamma_i^2 d\psi(z_i) =\tilde{\eta}
\end{align}
This can be achieved by simply constructing $\psi(z) = \sum_{1\leq i \leq n} h_i(z) \psi_i(z)$, with $d\psi_i(z_i) = \frac{2\pi}{\Gamma_i^2}(\tilde{x}_i,\tilde{y}_i)$ and $\{h_i\}_{1\leq i\leq n}$ the bump functions in corollary \ref{Proposition_surjectivity};
\item If  $\{\beta_i \}_{1\leq i\leq n} \bigcap \sigma(-\Delta_g)= \emptyset$, then one has that :
\begin{align*}
Df_{(\mathbf{z},\rho)}(\mathbf{w}, \psi) 
&=d^2(H_{g(\rho)}(\mathbf{z}))\mathbf{w} -  \frac{2n\bar{\Gamma}}{V_{g(\rho)}(\mathbb{S}^2)}\sum_{i=1}^{n}\Gamma_i d(\Delta_{g(\rho)}^{-1} -\Gamma_i\frac{V_{g(\rho)}(\mathbb{S}^2)}{4n\pi \bar{\Gamma}}I )\circ \psi\\
&= d^2(H_{g(\rho)}(\mathbf{z}))\mathbf{w} -  \frac{2n\bar{\Gamma}}{V_{g(\rho)}(\mathbb{S}^2)}\sum_{i=1}^{n}\Gamma_i d((\Delta_{g(\rho)}^{-1} +\frac{1}{\beta_i} I )\psi)\\
&= d^2(H_{g(\rho)}(\mathbf{z}))\mathbf{w} -  \frac{2n\bar{\Gamma}}{V_{g(\rho)}(\mathbb{S}^2)} \mathbf{\Lambda} \Phi_{\mathbf{z},\mathbf{\beta}}(\psi)
\end{align*}
Thus we should find a solution for the equation 
\begin{align}
 \mathbf{\Lambda} \Phi_{\mathbf{z},\mathbf{\beta}}(\psi) =-  \frac{V_{g(\rho)}(\mathbb{S}^2)}{\bar{\Gamma}}\tilde{\eta}
\end{align}
where the diagonal matrix
\begin{align}
\mathbf{\Lambda}  = diag(\Gamma_1, \Gamma_1, \Gamma_2 , \Gamma_2,...,\Gamma_n,\Gamma_n)
\end{align}
The surjectivity then follows from proposition \ref{Proposition_surjectivity} and that $\mathbf{\Lambda}$ is invertible.   
\end{enumerate}
 
\end{proof}
We can now prove the non-degeneracy of the Hamiltonian up to a generic $\mathcal{C}^{2}$ perturbation:
\begin{theorem}
\label{Thm_local_open_dense}
Suppose that for some $g\in[g]$ the condition \eqref{Condition_Vorticity_Spectrum} holds. Then for any $\epsilon>0$, up to a generic perturbation of $\rho \in \mathcal{C}^{2}(\mathbb{S}^2,\mathbb{R})$ s.t. $\tilde{g} = e^{2\rho} g$, $H_{\tilde{g}}$ has only non-degenerate critical points in $\Se$.
\end{theorem}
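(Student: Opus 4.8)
The plan is to read the conclusion off the parametrized transversality theorem (Theorem~\ref{Theorem_parametrized_version_of_Sard_Smale}), applied with $k=2$ to the map $f(\mathbf{z},\rho)=dH_{e^{2\rho}g}(\mathbf{z})$; by the conformal change formula \eqref{Formula_Hamiltonian_under_a_Conformal_Change_of_Metric} this has exactly the structure of the map in \eqref{Formula_Gradient_Hamiltonian} with $g$ in place of $g_0$ (equivalently, writing $g=e^{2\rho_0}g_0$, one just shifts the parameter of \eqref{Formula_Gradient_Hamiltonian} by $\rho_0$), and it is of class $\mathcal{C}^2$ by Lemma~\ref{Lemma_transversality} and Remark~\ref{remark:weakened regularity of conformal factor}. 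I take $X=\Se$, $Z=\mathcal{T}^{*}\Se$, $W=s_0(\Se)$ the zero section, and $Y$ a suitable ball about $0$ in $\mathcal{C}^2(\mathbb{S}^2,\mathbb{R})$ (see below). The theorem will yield that outside a meager set of conformal factors $\rho$ the closed $1$-form $dH_{e^{2\rho}g}$ is transverse to the zero section; since $\operatorname{codim}_Z W=\dim\Se=2n$, transversality at a zero $\mathbf{z}$ of $dH_{e^{2\rho}g}$ --- i.e., at a critical point --- forces the vertical component of $D(dH_{e^{2\rho}g})(\mathbf{z})$, which is precisely the Hessian $d^2H_{e^{2\rho}g}(\mathbf{z})$, to be an isomorphism. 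So $H_{e^{2\rho}g}$ is a Morse function on $\Se$ for all $\rho$ in a residual, hence dense, set.

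First I would dispose of a preliminary point that is needed for hypothesis (H2): condition \eqref{Condition_Vorticity_Spectrum} is an \emph{open} condition under conformal perturbation. Indeed $\bar{\Gamma}$ does not depend on $\rho$, so either $\bar{\Gamma}=0$, in which case all $\beta_i$ vanish and \eqref{Condition_Vorticity_Spectrum} holds for every metric because $0\in\sigma(-\Delta_g)$ always; or $\bar{\Gamma}\neq 0$, in which case all $\beta_i\neq 0$ and \eqref{Condition_Vorticity_Spectrum} asks that the $\beta_i=\beta_i(e^{2\rho}g)$ avoid the discrete set $\sigma(-\Delta_{e^{2\rho}g})$, a condition preserved under small perturbations since both the $\beta_i$ and the spectrum depend continuously on $\rho\in\mathcal{C}^2$. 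Hence I fix $\epsilon_0>0$ so that \eqref{Condition_Vorticity_Spectrum} holds for $e^{2\rho}g$ whenever $\|\rho\|_{\mathcal{C}^2}<\epsilon_0$, and let $Y$ be that ball; being an open subset of a Banach space it is again a $\mathcal{C}^2$ Banach manifold and a Baire space.

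Next I would verify (H1)--(H3). For (H1): $\Se$ is a $2n$-dimensional manifold and, being an open subset of the compact manifold $(\mathbb{S}^2_g)^n$, is $\sigma$-compact; $W=s_0(\Se)$ is closed in $Z$ (a zero section is closed in its total space), hence $\sigma$-closed, with codimension $2n$, so $\dim X-\operatorname{codim}W=0<2=k$. For (H2): if $(\mathbf{z},\rho)\in f^{-1}(W)$ then $\mathbf{z}$ is a critical point of $H_{e^{2\rho}g}$, condition \eqref{Condition_Vorticity_Spectrum} holds for $e^{2\rho}g$, and Proposition~\ref{Proposition_transversality} (built on Lemma~\ref{Lemma_transversality} and Proposition~\ref{Proposition_surjectivity}) already gives $Df_{(\mathbf{z},\rho)}$ surjective onto $\mathcal{T}_{s_0(\mathbf{z})}(\mathcal{T}^{*}\Se)=\mathcal{T}_{\mathbf{z}}\Se\oplus\mathcal{T}^{*}_{\mathbf{z}}\Se$ via \eqref{Formula_split_vector_bundle}; a fortiori $\operatorname{Range}(Df)+\mathcal{T}_z W=\mathcal{T}_z Z$. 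For (H3): exhaust $\Se$ by the sets $X_m=(\mathbb{S}^2_g)^n_{\epsilon+1/m}$, $m\geq 1$, whose closures are compact and contained in $\Se$; then $f^{-1}(W)=\bigcup_m\big(f^{-1}(W)\cap(\overline{X_m}\times Y)\big)$, and on each piece the map $(\mathbf{z},\rho)\mapsto(f(\mathbf{z},\rho),\rho)$ --- which on $f^{-1}(W)$ is just $(\mathbf{z},\rho)\mapsto((\mathbf{z},0),\rho)$ --- sends the preimage of a compact $C\subset W\times Y$ into $(\overline{X_m}\cap\pi_1(C))\times\pi_2(C)$, a product of compacta, as a closed subset, hence compactly; so the map is $\sigma$-proper. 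It is exactly because $\Se$ has its $\epsilon$-collision neighbourhood already excised that a sequence of critical points accumulating at the collision set eventually leaves every $\overline{X_m}$ rather than violating properness on a fixed one.

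With (H1)--(H3) in hand, Theorem~\ref{Theorem_parametrized_version_of_Sard_Smale} gives that $Y_{crit}=\{\rho\in Y:\ dH_{e^{2\rho}g}\text{ is not transverse to }s_0(\Se)\}$ is meager in $Y$; as $Y$ is Baire, $Y\setminus Y_{crit}$ is dense, and for each such $\rho$ every critical point of $H_{e^{2\rho}g}$ in $\Se$ is non-degenerate, so $H_{e^{2\rho}g}$ is Morse there. Since $\epsilon_0$ can be taken below the threshold at which \eqref{Condition_Vorticity_Spectrum} first fails, these $\rho$ are dense in arbitrarily small $\mathcal{C}^2$-balls, which is the assertion of Theorem~\ref{Thm_local_open_dense}. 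The analytic heart --- surjectivity of the linearization --- is already settled in Proposition~\ref{Proposition_transversality}; the only places that genuinely need care here are the $\sigma$-properness of (H3) and the bookkeeping that keeps \eqref{Condition_Vorticity_Spectrum} available throughout the parameter ball (so that (H2) holds at every critical point), and of these I expect (H3) to be the more delicate.
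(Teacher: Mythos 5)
Your proof is correct and follows essentially the same route as the paper: apply the parametrized transversality theorem with $X=\Se$, $Y\subset\mathcal{C}^2(\mathbb{S}^2,\mathbb{R})$, $Z=\mathcal{T}^{*}\Se$, $W=s_0(\Se)$, verifying (H1) via the exhaustion by $\overline{(\mathbb{S}^2)^n_{\epsilon+1/k}}$, (H2) via Proposition~\ref{Proposition_transversality}, and (H3) via $\sigma$-compactness and $\sigma$-closedness. Your additional care in shrinking $Y$ to a ball on which condition \eqref{Condition_Vorticity_Spectrum} persists (so that (H2) is available at every parameter value) and your explicit unwinding of $\sigma$-properness are refinements of points the paper leaves implicit, not a different argument.
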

\begin{proof}
One only needs to show that theorem \ref{Theorem_parametrized_version_of_Sard_Smale} applies. To this end, denote $\epsilon_k = \epsilon+\frac{1}{k}$, we verify that $(H1)-(H3)$ are true.
\begin{itemize}
\item{For $(H1)$:} since $\displaystyle \Se=\bigcup_{k\in \mathbb{N}}\overline{(\mathbb{S}^2)^n_{\epsilon_k}}$, i.e., $\Se$ is $\sigma$-compact. Moreover 
\begin{align*}
dim(X) - codim(W) = dim(\Se) - codim(s(\Se)) <2
\end{align*}
Hence $(H1)$ holds.
\item{For $(H2)$:} we see that $Df$ is subjective due to proposition \ref{Proposition_transversality}, in other words (H2) holds.
\item {For $(H3)$:} $\Se$ is $\sigma$-compact and that $W$ is $\sigma$-closed (as $\mathcal{T}^* \Se$ is metrizable). Hence $(H3)$ holds.
\end{itemize}
The theorem is thus proved. 
 
\end{proof}
Last but not least, observe that one can ``forget" the constraint \eqref{Condition_Vorticity_Spectrum}:
\begin{Lem}
\label{Lemma_Not_in_Specturm}
Fix a vorticity vector $\mathbf{\Gamma}$. Let 
\begin{align*}
    \mathcal{G}(\mathbf{\Gamma}) = \{\rho |  \text{  condition \eqref{Condition_Vorticity_Spectrum} holds for } g=e^{2\rho}g_0 \}
\end{align*}
Then $ \mathcal{G}(\mathbf{\Gamma})$ is open dense in $\mathcal{C}^{\infty}(\mathbb{S}^2,\mathbb{R})$.
\end{Lem}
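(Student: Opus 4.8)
The plan is to dispose of the degenerate case $\bar\Gamma=0$ at once, and, when $\bar\Gamma\neq0$, to reduce the genericity statement to a finite–dimensional transversality computation carried out on each eigenspace of $-\Delta_{g}$.

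\emph{Reductions and openness.} Since $0\in\sigma(-\Delta_{g})$ for every metric $g$, and all the $\beta_i$ vanish precisely when $\bar\Gamma=0$, the case $\bar\Gamma=0$ is immediate: condition \eqref{Condition_Vorticity_Spectrum} then reads $\{0\}\cap\sigma(-\Delta_g)=\{0\}$ for every $\rho$, so $\mathcal{G}(\mathbf\Gamma)=\mathcal{C}^{\infty}(\mathbb{S}^2,\mathbb{R})$. Assume henceforth $\bar\Gamma\neq0$, so each $\beta_i\neq0$; moreover $\mathrm{sgn}(\beta_i)=\mathrm{sgn}(\bar\Gamma\Gamma_i)$ is independent of $\rho$, so the $\beta_i<0$ never lie in $\sigma(-\Delta_g)\subset[0,\infty)$, and \eqref{Condition_Vorticity_Spectrum} is equivalent to the statement that no $\beta_i$ is a (necessarily positive) eigenvalue of $-\Delta_{g(\rho)}$. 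In dimension two $-\Delta_{g(\rho)}=e^{-2\rho}(-\Delta_{g_0})$, so $\sigma(-\Delta_{g(\rho)})$ is the set of $\lambda$ for which $-\Delta_{g_0}u=\lambda e^{2\rho}u$ has a nontrivial solution; by the min–max principle the eigenvalues $0=\lambda_0(\rho)<\lambda_1(\rho)\le\cdots\to\infty$ depend continuously on $\rho$ already in the $\mathcal{C}^0$-topology, as do $V_{g(\rho)}=\int_{\mathbb{S}^2}e^{2\rho}\omega_{g_0}$ and hence each $\beta_i(\rho)$. Near a fixed $\rho_0$ one has $\lambda_k(\rho)\ge c\,\lambda_k(g_0)\to\infty$ uniformly (because $e^{2\rho}$ is uniformly bounded above there), so the complement of $\mathcal{G}(\mathbf\Gamma)$ is locally the finite union, over $i$ and over the finitely many relevant $k$, of the closed sets $\{\rho:\beta_i(\rho)=\lambda_k(\rho)\}$; hence $\mathcal{G}(\mathbf\Gamma)$ is open.

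\emph{Density: set-up.} Fix $\rho_0$; if $\rho_0\in\mathcal{G}(\mathbf\Gamma)$ there is nothing to prove, so let $J_0$ be the finite nonempty set of indices $i$ with $\lambda_i^{*}:=\beta_i(\rho_0)\in\sigma(-\Delta_{g(\rho_0)})$ (necessarily $\lambda_i^{*}>0$). For $i\notin J_0$ the value $\beta_i(\rho)$ stays off the spectrum for $\rho$ near $\rho_0$ by continuity, so only the indices in $J_0$ obstruct. For $i\in J_0$ let $E_i$ be the eigenspace of $\lambda_i^{*}$, $\{u^{i}_1,\dots,u^{i}_{m_i}\}$ an orthonormal basis of $E_i$ for $\langle f,h\rangle=\int fh\,e^{2\rho_0}\omega_{g_0}$, and $V=\int e^{2\rho_0}\omega_{g_0}$. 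I would then perturb along a single ray $\rho_t=\rho_0+t\psi$ and track how $\beta_i(\rho_t)$ moves relative to the group of eigenvalues splitting off from $\lambda_i^{*}$. The family $B(t)=e^{-\rho_t}\circ(-\Delta_{g_0})\circ e^{-\rho_t}$ on $L^2(\omega_{g_0})$ is a real-analytic self-adjoint family with compact resolvent whose spectrum is $\sigma(-\Delta_{g(\rho_t)})$; Rellich–Kato perturbation theory produces analytic branches $\mu^{i}_p(t)$ with $\mu^{i}_p(0)=\lambda_i^{*}$ whose derivatives at $0$ are the eigenvalues of the matrix $N^{i}_{pq}=-2\lambda_i^{*}\int\psi\,e^{2\rho_0}u^{i}_pu^{i}_q\,\omega_{g_0}$, while $\dot\beta_i(0)=-\tfrac{2\lambda_i^{*}}{V}\int\psi\,e^{2\rho_0}\omega_{g_0}$. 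A direct computation gives
\[
\dot\beta_i(0)\,\mathrm{Id}-N^{i}=2\lambda_i^{*}\,\widetilde{Q}^{i}_{\psi},\qquad
\widetilde{Q}^{i}_{\psi}:=\int_{\mathbb{S}^2}\psi(x)\,e^{2\rho_0(x)}\Big(u^{i}(x)\,u^{i}(x)^{\mathsf T}-\tfrac1V\mathrm{Id}\Big)\,\omega_{g_0}(x),
\]
where $u^{i}(x)=(u^{i}_1(x),\dots,u^{i}_{m_i}(x))$. Hence, if $\psi$ can be chosen so that $\widetilde{Q}^{i}_{\psi}$ is nonsingular for every $i\in J_0$, then for each $i,p$ the analytic function $t\mapsto\beta_i(\rho_t)-\mu^{i}_p(t)$ has a simple zero at $t=0$, so it is nonzero for all small $t>0$; as $\beta_i(\rho_t)$ also stays bounded away from the rest of the spectrum for $t$ small, this yields $\beta_i(\rho_t)\notin\sigma(-\Delta_{g(\rho_t)})$ for every $i$ and all small $t>0$, i.e.\ $\rho_t\in\mathcal{G}(\mathbf\Gamma)$, with $\rho_t\to\rho_0$ in $\mathcal{C}^{\infty}$.

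\emph{Density: the finite-dimensional core, and the main obstacle.} It remains to produce a single $\psi\in\mathcal{C}^{\infty}(\mathbb{S}^2,\mathbb{R})$ with $\det\widetilde{Q}^{i}_{\psi}\neq0$ for all $i\in J_0$, and this is where I expect the real work to be. For fixed $i$ the linear map $\psi\mapsto\widetilde{Q}^{i}_{\psi}$ has range equal to the linear span of the symmetric matrices $M^{i}(x)=u^{i}(x)u^{i}(x)^{\mathsf T}-\tfrac1V\mathrm{Id}$ (its range is the orthogonal complement of $\{A:\langle A,M^{i}(x)\rangle\equiv0\}$, hence that span). The eigenvalues of $M^{i}(x)$ are $|u^{i}(x)|^{2}-\tfrac1V$ and $-\tfrac1V$ (with multiplicity $m_i-1$), so $M^{i}(x)$ is singular only where $|u^{i}(x)|^{2}=\tfrac1V$; but $\int|u^{i}(x)|^{2}e^{2\rho_0}\omega_{g_0}=m_i$, which rules out $|u^{i}|^{2}\equiv\tfrac1V$ when $m_i\ge2$, while for $m_i=1$ it would force $u^{i}_1$ to be a nonzero constant, impossible for a non-constant eigenfunction ($\lambda_i^{*}>0$). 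Thus $M^{i}(x_0)$ is nonsingular for some $x_0$, so the range of $\psi\mapsto\widetilde{Q}^{i}_{\psi}$ is not contained in the hypersurface $\{\det=0\}$; consequently $\psi\mapsto\prod_{i\in J_0}\det\widetilde{Q}^{i}_{\psi}$ is a nonzero polynomial on the finite-dimensional image of $\psi\mapsto(\widetilde{Q}^{i}_{\psi})_{i\in J_0}$, its non-vanishing locus is nonempty, and any such $\psi$ does the job. The heart of the argument is exactly this simultaneous non-degeneracy claim, and the point worth emphasizing is that the identity $\int|u^{i}|^{2}e^{2\rho_0}\omega_{g_0}=m_i$ (not the naive $=1$) is what rescues the case of a multiple eigenvalue — so, pleasantly, no genericity of simple spectrum is needed. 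The secondary technical burden is setting up the analytic perturbation theory for $B(t)$ and computing $N^{i}$, plus the uniform bound $\lambda_k(\rho)\to\infty$ used in openness; and one should bear in mind that homothetic perturbations $\rho_0+\mathrm{const}$ are useless, since they rescale $\beta_i$ and $\sigma(-\Delta_g)$ by the same factor, so a genuinely non-constant $\psi$ is unavoidable.
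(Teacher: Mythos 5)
Your proof is correct, and for the density half it takes a genuinely different --- and substantially heavier --- route than the paper. The openness part is essentially the paper's: only finitely many eigenvalues lie below $\max_i\beta_i$ near a fixed $\rho_0$, and these, together with the $\beta_i(\rho)$, depend continuously on $\rho$. For density, however, the paper uses a one-line homothety: starting from $\rho\notin\mathcal{G}(\mathbf{\Gamma})$ it sets $\tilde g=cg$ with $c$ near $1$ and asserts $\beta\in\sigma(-\Delta_g)\Leftrightarrow c\beta\in\sigma(-\Delta_{\tilde g})$ while $\tilde\beta_i=c^{-1}\beta_i$, so that the two quantities are pulled apart. You instead perturb along a ray $\rho_0+t\psi$ with non-constant $\psi$, invoke Rellich--Kato analytic perturbation theory for the conjugated family $e^{-\rho_t}(-\Delta_{g_0})e^{-\rho_t}$, and reduce everything to choosing $\psi$ with $\det\widetilde Q^i_\psi\neq0$, which you settle by the pointwise analysis of $u^i(x)u^i(x)^{\mathsf T}-\tfrac1V\mathrm{Id}$ together with a Zariski-type genericity argument; all of these steps check out. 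What your extra work buys is significant: your closing remark that homotheties rescale $\beta_i$ and $\sigma(-\Delta_g)$ by the \emph{same} factor is not a side comment but pinpoints a real problem with the paper's shortcut. Indeed $\Delta_{cg}=c^{-1}\Delta_g$, so $\sigma(-\Delta_{\tilde g})=c^{-1}\sigma(-\Delta_g)$ rather than $c\,\sigma(-\Delta_g)$, while $V_{\tilde g}=cV_g$ gives $\tilde\beta_i=c^{-1}\beta_i$; both quantities have dimensions of inverse area, so any homothety acts identically on them and preserves, rather than breaks, the coincidence $\beta_i\in\sigma(-\Delta_g)$. A genuinely non-constant conformal perturbation of the kind you construct therefore appears to be necessary, and your argument supplies exactly that. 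The only points I would ask you to spell out are (i) that the range of the linear map $\psi\mapsto\widetilde Q^i_\psi$ is closed (automatic, being finite-dimensional), so that it really equals the span of the matrices $M^i(x)$, and (ii) that for $i\in J_0$ the perturbed value $\beta_i(\rho_t)$ need only be compared with the analytic branches emanating from $\lambda_i^{*}$, the rest of the spectrum staying uniformly separated for small $t$ --- both of which you already indicate.
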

\begin{proof}
It is well known that spectrum $\sigma(-\Delta_{g})$ is non negative, starting from a simple 0, discrete, of finite multiplicity and increases to $+\infty$.
If $\bar{\mathbf{\Gamma}}=0$, then there is nothing to prove as $0\in\sigma(-\Delta_{g})$ for any $g$. 
Now consider the case $\bar{\mathbf{\Gamma}}\neq 0$. Suppose that $g$ s.t. $\{\beta_i \}_{1\leq i\leq n} \bigcap \sigma(-\Delta_g)=\emptyset$. Then let $\beta_{max}=\max_{1\leq i\leq n}\{\beta_i\}$, the interval $(0,\beta_{max})$ has only finitely many eigenvalues in $\sigma(-\Delta_{g})$, whose continuous dependence on the metric is thus uniform. Hence $\mathcal{G}(\mathbf{\Gamma})$ is open. Next, suppose $\rho\notin \mathcal{G}(\mathbf{\Gamma})$ and consider the homothetic change of metric $\tilde{\rho}= \rho + \frac{1}{2}\log c$ for $c$ close to $1$, thus $\tilde{g} = c g $. Note that $\beta \in \sigma(-\Delta_{g})\Leftrightarrow c\beta\in \sigma(-\Delta_{\tilde{g}})$. But $\beta_{i}$ becomes $\tilde{\beta}_{i} = \frac{1}{c} \beta_i$. Hence up to an arbitrary small homothetic change of metric condition \eqref{Condition_Vorticity_Spectrum} will be true, which implies that $\mathcal{G}(\mathbf{\Gamma})$ is dense.  
\end{proof}
Note that so far our non-degeneracy of $H_{g(\rho)}$ for generic $\rho\in \mathcal{C}^{2}(\mathbb{S}^2,\mathbb{R})$ is achieved in $\Se$ for any fixed $\epsilon>0$. In particular this is not a compact manifold. Given a $g= g(\rho)$, it will be convenient if we know \textit{\`a priori} that all the fixed points are uniformly separated away from the collision set. This will be studied in the next sub-section by assuming the non-degenerate condition on vorticity vector $\mathbf{\Gamma}$. 

\subsection{Critical Points and the Collision Set}
\label{Chapter_Riemannian_Subsection_Critical_Points_Collision}
\paragraph{}In celestial mechanics, the Shub's lemma \cite{shub1971appendix} states that the relative equilibria are uniformly separated from the set of collisions. This is also true for vortex problem on the plane if \eqref{Condition_No_Collision} holds by using the fact that $<dH(\mathbf{z}),\mathbf{z}>=cst$, see \cite{o1987stationary} \cite{roberts2018morse}. Although on $\mathbb{S}^2_g$ we can no longer take profit of this relation, in this paragraph we show that similar analogue holds. The intuition here is quite natural: as some of the vortices approach to each other into a small cluster and accumulate to a single point, replacing them by a single vortex which bears the sum of vorticities in this cluster will lead to a stationary configuration for fewer vortices under consideration.
\begin{Lem}
\label{Lemma_Accumulation_Cluster}
Consider the n-vortex problem on the Riemann sphere equipped with the metric $g$. If there exist an index set $\Lambda\subset\{1,2,...,n\}$ and a sequence of stationary configurations $\{ \mathbf{z}^k = (z_{1}^k,z_2^{k},...,z_{n}^k) \in \mathbb{S}^2_g \}_{k \in \mathbb{N}}$ satisfying that 
\begin{align}
\exists z^{*}\in \mathbb{S}^2_g , \text{ s.t. } \lim_{k\rightarrow \infty}z^k_i= z^{*} \quad  \forall i\in \Lambda
\end{align}
Then one must have that 
\begin{align}
\sum_{i\neq j, i,j\in \Lambda}\Gamma_i \Gamma_j = 0
\end{align}

\end{Lem}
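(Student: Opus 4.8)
The plan is to prove this as the point-vortex analogue of Shub's lemma. In the planar case one exploits the exact virial identity $\langle dH(\mathbf{z}),\mathbf{z}\rangle=\mathrm{const}$, which is unavailable on $\mathbb{S}^2_g$; instead I would localise near the accumulation point $z^{*}$ and extract a \emph{local} virial identity from the vanishing of $dH_g$ at the stationary configurations $\mathbf{z}^k$.

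First, by passing to a subsequence I may assume each sequence $(z^k_j)_{k\in\mathbb{N}}$ converges in the compact manifold $\mathbb{S}^2_g$, and I take $\Lambda$ to be the full set of indices whose vortices converge to $z^{*}$ (this is the situation in which the lemma is applied in the next subsection), so that $z^k_j$ stays a fixed positive distance from $z^{*}$ for every $j\notin\Lambda$ and all large $k$. Next I fix isothermal coordinates $\varphi$ on a neighbourhood of $z^{*}$ with $\varphi(z^{*})=0$ and $g=e^{2\sigma}(dx^2+dy^2)$, and set $\xi_i:=\varphi(z_i)\in\mathbb{R}^2$ for $i\in\Lambda$. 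I then invoke the two standard local facts: (a) $G_g(z,w)+\tfrac{1}{2\pi}\log d_g(z,w)$ extends to a smooth function $h_g$ near $(z^{*},z^{*})$, with $h_g(z,z)=R_g(z)$; and (b) in the chart $\varphi$ one has $d_g(\varphi^{-1}\xi,\varphi^{-1}\xi')=|\xi-\xi'|\,\mu(\xi,\xi')$ with $\mu$ smooth and positive. Combining these, on the chart the Hamiltonian reads
\[
H_g=-\frac{1}{2\pi}\sum_{i<j,\ i,j\in\Lambda}\Gamma_i\Gamma_j\log|\xi_i-\xi_j|+F,
\]
where $F$ is of class $\mathcal{C}^1$ in the variables $(\xi_i)_{i\in\Lambda}$ near $0$ and, crucially, its $\xi$-derivatives stay bounded along the sequence precisely because no vortex outside $\Lambda$ approaches $z^{*}$.

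The heart of the argument is then a one-line computation. Since $\mathbf{z}^k$ is a critical point of $H_g$, all partial derivatives vanish, so in particular $\partial_{\xi_i}H_g(\mathbf{z}^k)=0$ for each $i\in\Lambda$, whence $Q_k:=\sum_{i\in\Lambda}\langle\xi_i^k,\partial_{\xi_i}H_g(\mathbf{z}^k)\rangle=0$ for every $k$. Splitting $Q_k$ via the decomposition of $H_g$: the singular part contributes, by the elementary identity $\sum_{i\neq j}\Gamma_i\Gamma_j\langle\xi_i,\xi_i-\xi_j\rangle/|\xi_i-\xi_j|^2=\sum_{i<j}\Gamma_i\Gamma_j$ (pair $(i,j)$ with $(j,i)$), exactly $-\tfrac{1}{4\pi}\sum_{i\neq j,\ i,j\in\Lambda}\Gamma_i\Gamma_j$, independently of $k$ and of the internal geometry of the cluster; the regular part contributes $\sum_{i\in\Lambda}\langle\xi_i^k,\partial_{\xi_i}F\rangle=O(\max_{i\in\Lambda}|\xi_i^k|)\to 0$ since $\xi_i^k=\varphi(z^k_i)\to 0$. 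Passing to the limit in $0=Q_k=-\tfrac{1}{4\pi}\sum_{i\neq j,\ i,j\in\Lambda}\Gamma_i\Gamma_j+o(1)$ gives $\sum_{i\neq j,\ i,j\in\Lambda}\Gamma_i\Gamma_j=0$.

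The main obstacle is not the virial computation but making the analytic inputs (a) and (b) precise — the parametrix expansion of the Green function across the diagonal (its diagonal value being $R_g$ by definition) and the conformal-coordinate expansion of geodesic distance — together with the bookkeeping that keeps $F$ and its derivatives controlled, which is exactly why one must first arrange that the only vortices tending to $z^{*}$ are those indexed by $\Lambda$: otherwise a term $\Gamma_i\Gamma_j G_g(z_i,z_j)$ with $i\in\Lambda$, $j\notin\Lambda$ would be singular at $z^{*}$ and $F$ would fail to be $\mathcal{C}^1$ there. Conceptually the lemma says that a collapsing stationary cluster must rescale to a critical configuration of the \emph{planar} point-vortex Hamiltonian of that cluster, and the planar virial identity forces the pairwise vorticity products to sum to zero.
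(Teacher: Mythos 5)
Your proof is correct, but it follows a genuinely different route from the paper's. The paper works extrinsically: it represents $H_g$ through $H_{g_0}$ plus the conformal correction, writes the stationarity condition as the vanishing of the explicit $\mathbb{R}^3$ vector field $\sum_{j\neq i}\Gamma_i\Gamma_j\,(s_j\times s_i)/\norm{s_i-s_j}^2+\Gamma_i^2F(s_i)$, first sums over $i\in\Lambda$ (the intra-cluster terms cancel by antisymmetry of $s_j\times s_i$) to control the inter-cluster and Robin contributions in the limit, and then takes the moment $\sum_{i\in\Lambda}s_i^k\times\dot s_i^k=0$, where the identity $s_i\times(s_j\times s_i)+s_j\times(s_i\times s_j)=\tfrac12(s_i+s_j)\norm{s_i-s_j}^2$ produces $\sum_{i\neq j,\,i,j\in\Lambda}\Gamma_i\Gamma_j\,s^{*}=\mathbf{0}$. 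Your local virial identity $\sum_{i\in\Lambda}\langle\xi_i^k,\partial_{\xi_i}H_g\rangle=0$ in an isothermal chart is the intrinsic avatar of the same dilation-invariance of the logarithmic kernel; what it buys is independence from the explicit round-sphere Green function, so your argument transfers verbatim to any closed surface, whereas the paper's cross-product computation is tied to the embedding $\mathbb{S}^2\subset\mathbb{R}^3$ and the chord-distance form of $G_{g_0}$. The price is that you must invoke the parametrix expansion $G_g(z,w)=-\tfrac{1}{2\pi}\log d_g(z,w)+h_g(z,w)$ and the comparison $d_g=|\xi-\xi'|\mu$ with $\mu$ smooth and positive, which you correctly identify as the analytic input to be justified. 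One further remark in your favour: you make explicit the hypothesis that $\Lambda$ is the \emph{maximal} cluster converging to $z^{*}$ (no vortex outside $\Lambda$ accumulates there); the paper's proof needs exactly the same assumption when passing to the limit in its equation \eqref{Formula_cancelled} (the denominators $\norm{s_l^{*}-s^{*}}$ must not vanish), but leaves it implicit. Since this is the form in which the lemma is applied to prove Theorem \ref{Theorem_Main_Morse}, your reading matches the intended statement.
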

\begin{proof}
We first represent Hamiltonian $H_g$ by $H_{g_0}$, while $g_0$ is the round metric and $g= e^{2\rho}g_0$ is the conformal change of metric. Actually putting \eqref{Formula_Hamiltonian_under_a_Conformal_Change_of_Metric} and \eqref{Formula_Vortex_Round_Sphere}, one sees that 
\begin{align}
H_g(\mathbf{z}) &= H_{g_0}(\mathbf{z})  +\frac{1}{2\pi}\sum_{i=1}^{n}\Gamma_i^2 \rho(z_i)-\frac{\sum_{i=1}^{n}\Gamma_i}{V_{g}(\mathbb{S}^2)}\sum_{i=1}^{n}\Gamma_i \Delta^{-1}_{g_0}e^{2\rho(z_i)}\\
&= -\frac{1}{4\pi}\sum_{1\leq i<j\leq n}\Gamma_i\Gamma_j \log l_{ij}^2  +\frac{1}{2\pi}\sum_{i=1}^{n}\Gamma_i^2 \rho(z_i)-\frac{n\bar{\Gamma}\sum_{i=1}^{n}\Gamma_i \Delta^{-1}_{g_0}e^{2\rho(z_i) }}{V_{g}(\mathbb{S}^2)}
\end{align}
One can explicitly write down the dynamics of $n$-vortex seen as a system embedded in $\mathbb{R}^3$, i.e.
\begin{align}
\Gamma_i \dot{s}_{i}^k = -\frac{1}{e^{2\rho(s_i^k)}} \big(  \sum_{\substack{1\leq j\leq n\\ j\neq i}} \Gamma_i\Gamma_j \frac{s_j^k \times s_i^k}{\norm{s_i^k-s_j^k}^2} + \Gamma_i^2 F(s_i^k) \big)
\end{align}
here $s^{k}_i \in \mathbb{R}^3$ represent the coordinates of the $i^{th}$ vortex in $\mathbb{R}^3$ of the $k^{th}$ stationary configuration. The vector $F$ corresponds to the dynamics generated from Robin's mass function due to the change of metric. $F$ depends only on the surface geometry and is indifferent to the interaction between distinguished vortices. 

Now given an index set $\Lambda \in \{1,2,...,n\}$ and a convergent sequence of stationary configurations $\{ \mathbf{s}^{k}\}_{k\in \mathbb{N}}$ with $ \mathbf{s}^k =(s^k_1, s^k_2,...,s^k_n)\in \mathbb{R}^{3n}, \forall k\in \mathbb{N}$ satisfying that 
\begin{align*}
\forall 1\leq i\leq n, \lim_{k\rightarrow \infty}s^k_i = s^{*}_i \in \mathbb{S}^2_g\quad \text{ and }\quad \forall i\in \Lambda, s^*_i = s^*
\end{align*}
 For $i\in \Lambda$, we look at the interaction due to the vortices in the group $\Lambda$ and those not in the group $\Lambda$ respectively. More precisely,
\begin{align}
\mathbf{0} =\Gamma_i \dot{s}_{i}^k = -\frac{1}{e^{2\rho(s_i^k)}} (  \sum_{j\in \Lambda,j\neq i} \Gamma_i\Gamma_j \frac{s_j^k \times s_i^k}{\norm{s_i^k-s_j^k}^2} + \sum_{l\notin \Lambda} \Gamma_i\Gamma_l \frac{s_l^k \times s_i^k}{\norm{s_i^k-s_l^k}^2} + \Gamma_i^2 F(s_i^k))
\end{align}
Multiplying $\frac{1}{e^{2\rho(s_i^k)}}$ and summing over $i\in \Lambda$ implies that
\begin{align}
\mathbf{0}=\sum_{i\in \Lambda} e^{2\rho(s_i^k)}\Gamma_i \dot{s}_{i}^k = -\sum_{i\in \Lambda} \sum_{l\notin \Lambda} \Gamma_i\Gamma_l \frac{s_l^k \times s_i^k}{\norm{s_i^k-s_l^k}^2} -\sum_{i\in \Lambda} \Gamma_i^2 F(s_i^k) 
\end{align}
By passing $k\rightarrow \infty$ one sees that 
\begin{align}
\label{Formula_cancelled}
\sum_{i\in \Lambda} \sum_{l\notin \Lambda} \Gamma_i\Gamma_l \frac{s^*_l \times s^*}{\norm{s^*_l-s^*}^2} + \sum_{i\in \Lambda} \Gamma_i^2 F(s^*) = \mathbf{0}
\end{align}

Next notice that by calculating the cross product of $s_i^{k}\times \dot{s}_i^{k}$, one sees that  
\begin{align}
\mathbf{0}&=\sum_{i\in \Lambda}e^{2\rho(s_i^k)}\Gamma_i s_i^k \times  \dot{s}_{i}^k \notag\\
&= - \frac{1}{2}\sum_{i\neq j, i,j \in \Lambda} \Gamma_i\Gamma_j (s_i^k +s_j^k) 
- \sum_{i\in \Lambda} s_i^k \times (\sum_{l\notin \Lambda} \Gamma_i\Gamma_l \frac{s_l^k \times s_i^k}{\norm{s_i^k-s_l^k}^2} +\Gamma_i^2 F(s_i^k))
\end{align}
The fact that $\forall i\in \Lambda, \lim_{k\rightarrow \infty} s_i^k = s^* $ together with formula \eqref{Formula_cancelled} implies then
\begin{align}
\label{Eq:sum_sub_vorticity}
\sum_{\substack{i\neq j\\ i,j \in \Lambda}} \Gamma_i\Gamma_j s^* = \mathbf{0}  
\end{align}
Since $s^*\in \mathbb{S}^2$, \eqref{Eq:sum_sub_vorticity} can hold only when $\displaystyle\sum_{\substack{i\neq j\\ i,j \in \Lambda}} \Gamma_i\Gamma_j =0$.  
\end{proof}
We are now at the point to prove theorem \ref{Theorem_Main_Morse}:
\paragraph{\textbf{Proof of Theorem \ref{Theorem_Main_Morse}}:} 
We fix $\mathbf{\Gamma}= (\Gamma_1, \Gamma_2,...,\Gamma_n)$ s.t. the condition \eqref{Condition_No_Collision} holds. For generic $\rho \in \mathcal{C}^{2}(\mathbb{S}^2,\mathbb{R})$, the Hamiltonian $H_{g(\rho)}$ is non-degenerate due to theorem \ref{Thm_local_open_dense} and lemma \ref{Lemma_Not_in_Specturm}. Now according to lemma \ref{Lemma_Accumulation_Cluster} no fixed points can accumulate to the diagonal $\mathcal{N}^n$, in other words there exists an $\epsilon$ s.t. all the fixed points of $H_g$ are located in the closure of $\Se$, which is a compact set and $H_{g(\rho)}$ can has only finitely many fixed points in the compact set $\Se$. The density of $\mathcal{C}^{\infty}(\mathbb{S}^2,\mathbb{R})$ in $\mathcal{C}^{2}(\mathbb{S}^2,\mathbb{R})$ then finishes the proof.    \QED


\section{Vorticity, Symplectic Form, and Periodic Orbits}
\label{Chapter_Vorticity_and_Symplectic_Invariant}
In this part we study the $n$-vortex problem on $\mathbb{S}^2_g$ from the perspective of symplectic geometry. We will also need to use the existence of non-trivial pseudo-holomorphic sphere in $(\mathbb{S}^2_g)^n$, the main tool that we use  is a direct  corollary of theorem 1.12 in Hofer and Viterbo's proof of Weinstein conjecture in the presence of non-trivial holomorphic spheres \cite{hofer1992weinstein}. However the successful application of such method for the $n$-vortex problem on $\mathbb{S}^2_g$, to the contrary of that in the plane, turns out to be quite sensitive to the vorticity vector. For a detailed introduction to the $J$-holomorphic curves and symplectic geometry, see \cite{McduffJholomorphic}.

\subsection{Minimal Positive Action}
\label{Chapter_Symplectic_Subsection_Symplectic_Action}

Let $(V,\omega)$ be a closed symplectic manifold. For any smooth map $f: \mathbb{S}^2 \rightarrow V$, one can define the following integral for the pull-back form $f^{*}\omega$ on $\mathbb{S}^2$, i.e.,
\begin{align}
A_V^{\omega}(f) = \int_{\mathbb{S}^2} f^*\omega 
\end{align}
Since $V$ is a closed manifold, the number $A_V^{\omega}(f)$ is invariant in the same homotopy class $[f]\in \pi_2(V)$ (due to the Stokes theorem).  Define furthermore
\begin{align*}
m(V,\omega) &= \inf_{\substack{u \in \mathcal{C}^{\infty}(\mathbb{S}^2,V)\\ A_V^{\omega}(u) >0}}A_V^{\omega}(u) 
\end{align*}
One sets $m(V,\omega)= +\infty$ by convention if $\pi_2(V)=0$. The following lemma provides a situation where $m(V,\omega)$ can be calculated explicitly:

\begin{Lem}
\label{Lem_minimalsphere}
Let $\omega$ be a symplectic form on $\mathbb{S}^2$ and consider the product manifold $( (\mathbb{S}^2)^p, \omega_{\mathbf{\Gamma}})$, where 
\begin{align*}
&\mathbf{\Gamma}=(\Gamma_1,\Gamma_2,...,\Gamma_p)\in \mathbb({R}_{*})^n\\
&\omega_{\mathbf{\Gamma}} = \Gamma_1\omega \oplus \Gamma_2\omega \oplus...\oplus\Gamma_p\omega\\
\end{align*}
Then 
\begin{enumerate}
\item If $\Gamma_i \in \mathbb{Z}_{*}, 1\leq i \leq p$, then one has 
\begin{align}
m((\mathbb{S}^2)^p,\omega_{\mathbf{\Gamma}}) = h(\mathbf{\Gamma})\int_{\mathbb{S}^2}\omega
\end{align}
where $h(\mathbf{\Gamma})$ the highest common factor of $\{\Gamma_i\}_{1\leq i \leq p}$;
\item  If $\{\Gamma_i\}_{1\leq i \leq p}$ are not all commensurable, then 
$c(\mathbf{\Gamma}) = 0$.
\end{enumerate}
\end{Lem}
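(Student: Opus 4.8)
The plan is to reduce the computation of $m((\mathbb{S}^2)^p,\omega_{\mathbf{\Gamma}})$ to an elementary fact about additive subgroups of $(\mathbb{R},+)$. First I would recall that $\pi_2((\mathbb{S}^2)^p)\cong\mathbb{Z}^p$, a homotopy class of a map $f:\mathbb{S}^2\to(\mathbb{S}^2)^p$ being detected by the tuple of degrees $(d_1,\dots,d_p)$ with $d_i=\deg(\pi_i\circ f)$, where $\pi_i:(\mathbb{S}^2)^p\to\mathbb{S}^2$ denotes the $i$-th projection. Since $\omega_{\mathbf{\Gamma}}=\sum_{i=1}^{p}\Gamma_i\,\pi_i^{*}\omega$, naturality of the pull-back together with the identity $\int_{\mathbb{S}^2}(\pi_i\circ f)^{*}\omega=d_i\int_{\mathbb{S}^2}\omega$ gives, for every smooth $f$ with $[f]=(d_1,\dots,d_p)$,
\begin{align*}
A_{(\mathbb{S}^2)^p}^{\omega_{\mathbf{\Gamma}}}(f)=\Big(\sum_{i=1}^{p}\Gamma_i d_i\Big)\int_{\mathbb{S}^2}\omega,
\end{align*}
which in particular re-confirms that $A$ descends to a homomorphism $\pi_2((\mathbb{S}^2)^p)\to\mathbb{R}$.

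Next I would note that every homotopy class is realised by an honest smooth sphere: for $f=(f_1,\dots,f_p)$ with each $f_i:\mathbb{S}^2\to\mathbb{S}^2$ smooth of degree $d_i$ one has $[f]=(d_1,\dots,d_p)$. Therefore the set of areas $\{A(f):f\in\mathcal{C}^{\infty}(\mathbb{S}^2,(\mathbb{S}^2)^p)\}$ is exactly $B\cdot G$, where $B:=\int_{\mathbb{S}^2}\omega>0$ and $G:=\Gamma_1\mathbb{Z}+\cdots+\Gamma_p\mathbb{Z}$ is the subgroup of $\mathbb{R}$ generated by the vorticities; since $G$ is a group this gives $m((\mathbb{S}^2)^p,\omega_{\mathbf{\Gamma}})=B\cdot\inf\{g\in G:g>0\}$.

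For part (1), with all $\Gamma_i\in\mathbb{Z}_{*}$, Bézout's identity gives $G=h(\mathbf{\Gamma})\mathbb{Z}$, so the infimum above equals $h(\mathbf{\Gamma})$ and is attained, whence $m=h(\mathbf{\Gamma})\int_{\mathbb{S}^2}\omega$. For part (2) I would invoke the classification of closed subgroups of $\mathbb{R}$: $\overline{G}$ is either $\{0\}$, a discrete group $\gamma\mathbb{Z}$, or all of $\mathbb{R}$. It is not $\{0\}$ because the $\Gamma_i$ are nonzero; and if $\overline{G}=\gamma\mathbb{Z}$ then $G$, being a subgroup of the infinite cyclic group $\gamma\mathbb{Z}$, is itself cyclic, say $G=\gamma_0\mathbb{Z}$, which forces $\Gamma_i/\Gamma_j\in\mathbb{Q}$ for all $i,j$, i.e. the $\Gamma_i$ commensurable, contradicting the hypothesis. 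Hence $\overline{G}=\mathbb{R}$, so $G$ is dense, $\inf\{g\in G:g>0\}=0$, and $m((\mathbb{S}^2)^p,\omega_{\mathbf{\Gamma}})=0$.

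The argument is essentially bookkeeping and I do not expect a genuine obstacle; the points I would handle with care are that the realising spheres be smooth (as the definition of $m$ requires), that in part (1) the infimum is in fact a minimum — which is exactly what keeps $m$ strictly positive there — and, in part (2), that a nontrivial non-cyclic subgroup of $\mathbb{R}$ is necessarily dense (equivalently, that $G$ is cyclic if and only if the $\Gamma_i$ are pairwise commensurable).
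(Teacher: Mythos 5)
Your argument is correct and follows essentially the same route as the paper: identify $\pi_2((\mathbb{S}^2)^p)\cong\mathbb{Z}^p$, observe that the action of a class $(d_1,\dots,d_p)$ is $(\sum_i\Gamma_i d_i)\int_{\mathbb{S}^2}\omega$, then apply B\'ezout for part (1) and a density argument for part (2). The only cosmetic difference is that you phrase part (2) via the classification of closed subgroups of $\mathbb{R}$ where the paper cites Diophantine approximation for two incommensurable vorticities; these are the same fact, and your extra care about smooth realizability of homotopy classes and attainment of the infimum in part (1) is sound.
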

\begin{proof}
Let $f\in \mathcal{C}^{\infty}(\mathbb{S}^2, (\mathbb{S}^2)^p)$. The constraint $A_{\mathbb{S}^2}^{\omega}(f)>0$ excludes the possibility of $f$ being a constant. 
As for the product manifold $(\mathbb{S}^2)^p$, one has that
\begin{align}
\pi_2((\mathbb{S}^2)^p) = \pi_2(\mathbb{S}^2) \oplus  \pi_2(\mathbb{S}^2)\oplus...\oplus  \pi_2(\mathbb{S}^2) = \mathbb{Z}^p
\end{align}
For a given non-trivial homotopy class $[f] = (k_1,k_2,...,k_p)\in \pi_2((\mathbb{S}^2)^p)$, one sees that  
\begin{align}
m((\mathbb{S}^2)^p,\omega_{\mathbf{\Gamma}})
 = ([f] \cdot \mathbf{\Gamma})\int_{\mathbb{S}^2}\omega = (k_1\Gamma_1+ k_2 \Gamma_2 +...+ k_p \Gamma_p)\int_{\mathbb{S}^2}\omega
\end{align}
\begin{enumerate}
\item If $\Gamma_i \in \mathbb{Z}, 1\leq i \leq p$, then it follows by B\'ezout's identity in elementary number theory, that 
$m((\mathbb{S}^2)^p,\omega_{\mathbf{\Gamma}})
= h(\mathbf{\Gamma}) \int_{\mathbb{S}^2}\omega$.
\item If $\{\Gamma_i\}_{1\leq i \leq p}$ are not all commensurable, then we suppose without loss of generality that $\Gamma_1$ and $\Gamma_2$ are not commensurable, i.e. $\frac{\Gamma_1}{\Gamma_2} \notin \mathbb{Q}$. By considering the homotopy class of the type $[f]\in (k_1, k_2,0,0,...,0) \in \mathbb{Z}^p$, we have
\begin{align} 
\label{eq: commensurable}
m(\mathbb{S}^2)^p,\omega_{\mathbf{\Gamma}})\leq  \Gamma_1 \inf_{\substack{\mathbb{Z}\times\mathbb{Z}\setminus\{(0,0)\} \\k_1 + k_2 \frac{\Gamma_2}{\Gamma_1}>0}} (k_1 + k_2 \frac{\Gamma_2}{\Gamma_1}) c = 0
\end{align}
where the last equality is a consequence of standard Diophantine approximation \cite{schmidt1996diophantine}. 
\end{enumerate}
The lemma is thus proved.  
\end{proof}

\paragraph{}Let us still divide the vortex into two groups, one consists of a single vortex and the other are composed by the rest of vortices, i.e., 
\begin{description}
\item{Group I:} the $k^{th}$ vortex $\{{z}_k\}$;
\item{Group II:} the other $n-1$ vortices $\{z_1,z_2,...z_{k-1}, z_{k+1},...z_{n}\}$ 
\end{description}
 We then define the symplectic form:
\begin{align}
\omega_i = \Gamma_i \omega_g, \forall 1\leq i \leq n. \quad \omega_k^{*} = \bigoplus_{1\leq i \leq n, i\neq k}\omega_{i}
\end{align}
Thus $\omega_k$ denotes the symplectic form on $\mathbb{S}^2_g$ corresponding to the phase space of the $k^{th}$ vortex, and $\omega_k^{*}$ is the symplectic form on $(\mathbb{S}^2)^{n-1}$ consisting of all but the $k^{th}$ vortex. 
The above discussion motivates the following observation:
\begin{Pro}
\label{Proposition_Thin_HCF}
Given a vorticity vector $\mathbf{\Gamma}=(\Gamma_1,\Gamma_2,...\Gamma_n)$ for the $n$-vortex problem on $\mathbb{S}^2_g$ s.t. each vorticity is positive, i.e. $\forall 1\leq i\leq n,\text{ } \Gamma_i>0$. Then 
\begin{align}
m(\mathbb{S}^2_g,\omega_k) \leq m((\mathbb{S}^2_g)^{n-1},\omega_k^{*}) \Leftrightarrow \Gamma_k \text{ is thin with respect to } \mathbf{\Gamma}
\end{align}
\end{Pro}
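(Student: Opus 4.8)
The plan is to translate the ``thin'' condition on vorticities directly into an inequality between minimal symplectic areas via Lemma~\ref{Lem_minimalsphere}. The quantity $m(\mathbb{S}^2_g, \omega_k)$ is the minimal positive area of a sphere in $(\mathbb{S}^2_g, \Gamma_k\omega_g)$, and since $\pi_2(\mathbb{S}^2) = \mathbb{Z}$ the area of a degree-$d$ sphere is $d\,\Gamma_k V_g(\mathbb{S}^2)$, so the infimum over positive areas is simply $\Gamma_k V_g(\mathbb{S}^2)$. For the right-hand side, $\omega_k^{*} = \bigoplus_{i\neq k}\Gamma_i\omega_g$ is exactly the product symplectic form treated in Lemma~\ref{Lem_minimalsphere} with $p = n-1$ and vorticity vector $(\Gamma_1,\dots,\Gamma_{k-1},\Gamma_{k+1},\dots,\Gamma_n)$. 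By that lemma, $m((\mathbb{S}^2_g)^{n-1}, \omega_k^{*}) = \kappa(\Gamma_1,\dots,\widehat{\Gamma_k},\dots,\Gamma_n)\, V_g(\mathbb{S}^2)$ in the commensurable case (rescaling the integer statement of Lemma~\ref{Lem_minimalsphere} by the common factor $\beta$, which is precisely how $\kappa$ was defined), and $= 0$ in the non-commensurable case.

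With these two identifications the equivalence is immediate arithmetic. Dividing through by $V_g(\mathbb{S}^2) > 0$, the inequality $m(\mathbb{S}^2_g,\omega_k) \leq m((\mathbb{S}^2_g)^{n-1},\omega_k^{*})$ becomes $\Gamma_k \leq \kappa(\Gamma_1,\dots,\widehat{\Gamma_k},\dots,\Gamma_n)$ in the commensurable case, which is exactly Definition~\ref{Definition_Thin_Vortex} of $\Gamma_k$ being thin. In the non-commensurable case the right-hand side is $0$ while $\Gamma_k > 0$, so the inequality fails; and correspondingly $\kappa$ is only defined (and the thin condition \eqref{Condition_Thin_Vorticity} only makes sense) when the remaining vorticities are commensurable, so $\Gamma_k$ is not thin either. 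Hence both sides of the claimed equivalence fail simultaneously, and the equivalence holds.

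The one point requiring a little care is the bookkeeping in the commensurable case: Lemma~\ref{Lem_minimalsphere} is stated for \emph{integer} vorticities, whereas here the $\Gamma_i$ are merely commensurable positive reals. I would handle this by writing $\Gamma_i = \beta l_i$ with $l_i \in \mathbb{N}$ and $\beta \in \mathbb{R}_+$, noting that $\omega_k^{*} = \beta\bigoplus_{i\neq k} l_i\omega_g$, so that $m((\mathbb{S}^2_g)^{n-1},\omega_k^{*}) = \beta\, m((\mathbb{S}^2_g)^{n-1}, \bigoplus_{i\neq k} l_i\omega_g) = \beta\, h(\{l_i\}_{i\neq k}) \int_{\mathbb{S}^2}\omega_g = \kappa(\{\Gamma_i\}_{i\neq k})\, V_g(\mathbb{S}^2)$ by the integer case of Lemma~\ref{Lem_minimalsphere} and the definition of $\kappa$. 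This is the only genuine step; the rest is unwinding definitions. I do not anticipate a serious obstacle — the proposition is essentially a restatement of Lemma~\ref{Lem_minimalsphere} adapted to the two-group splitting, and its real purpose is to set up the hypothesis under which the Hofer–Viterbo machinery can be invoked in the proof of Theorem~\ref{Theorem_Main_Periodic_Orbits}.
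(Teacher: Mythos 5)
Your proposal is correct and follows exactly the route the paper takes: the paper's own proof is the one-line remark that the proposition is ``a direct consequence of Definition~\ref{Definition_Thin_Vortex} and Lemma~\ref{Lem_minimalsphere},'' and your argument simply fills in the computation that this remark leaves implicit, namely $m(\mathbb{S}^2_g,\omega_k)=\Gamma_k V_g(\mathbb{S}^2)$ and $m((\mathbb{S}^2_g)^{n-1},\omega_k^{*})=\kappa(\{\Gamma_i\}_{i\neq k})V_g(\mathbb{S}^2)$ (or $0$ in the non-commensurable case), after which one divides by $V_g(\mathbb{S}^2)>0$. Your rescaling step handling commensurable real vorticities via $\Gamma_i=\beta l_i$ is the right way to pass from the integer case of the lemma to the general one, and the observation that both sides of the equivalence fail simultaneously when the remaining vorticities are incommensurable is also consistent with the paper's intent.
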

\begin{proof}
This is a direct consequence of definition \ref{Definition_Thin_Vortex} and lemma \ref{Lem_minimalsphere}.
\end{proof}

In particular a direct corollary shows that for the identical $n$-vortex problem every vortex is a thin one:
\begin{Col}
Suppose that all the vorticities are identical, i.e. $\Gamma_i = \Gamma_j, \forall 1\leq i<j\leq n$. Then each vortex is a thin vortex.
\end{Col}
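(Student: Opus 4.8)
The plan is to unwind the definition of $\kappa$ for a constant tuple and read the inequality \eqref{Condition_Thin_Vorticity} off directly; as a cross-check I will also sketch the symplectic route through Proposition~\ref{Proposition_Thin_HCF}. Write $\Gamma_i = \Gamma$ for all $i$ with $\Gamma > 0$, fix an index $k$, and consider the reduced tuple $(\Gamma_1,\dots,\Gamma_{k-1},\Gamma_{k+1},\dots,\Gamma_n)$, all of whose entries equal $\Gamma$. To evaluate $\kappa$ on this tuple I would take the admissible choice $\beta = \Gamma$ and $l_i = 1$ for every $i\neq k$, which is legitimate since $\Gamma/\Gamma = 1 \in \mathbb{Q}$ so the entries are commensurable. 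Then $h(l_1,\dots,l_{k-1},l_{k+1},\dots,l_n) = h(1,\dots,1) = 1$, hence $\kappa(\Gamma_1,\dots,\Gamma_{k-1},\Gamma_{k+1},\dots,\Gamma_n) = \beta\cdot 1 = \Gamma$. Therefore $\Gamma_k = \Gamma \leq \Gamma = \kappa(\dots)$, which is exactly \eqref{Condition_Thin_Vorticity}; since $k$ was arbitrary, every vortex is thin.

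Alternatively, I would argue via minimal positive actions. Here $\omega_k = \Gamma\omega_g$, so $m(\mathbb{S}^2_g,\omega_k) = \Gamma\int_{\mathbb{S}^2}\omega_g$, since $\pi_2(\mathbb{S}^2)\cong\mathbb{Z}$ and its positive generator contributes the factor $1$. For $\omega_k^{*} = \bigoplus_{i\neq k}\Gamma\omega_g$ on $(\mathbb{S}^2_g)^{n-1}$ I would invoke Lemma~\ref{Lem_minimalsphere} after a harmless rescaling: regard $\Gamma\omega_g$ itself as the base symplectic form, so that all $n-1$ coefficients equal $1\in\mathbb{Z}_{*}$ and $h(1,\dots,1) = 1$, giving $m((\mathbb{S}^2_g)^{n-1},\omega_k^{*}) = \Gamma\int_{\mathbb{S}^2}\omega_g$. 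Consequently $m(\mathbb{S}^2_g,\omega_k) = m((\mathbb{S}^2_g)^{n-1},\omega_k^{*})$, in particular $m(\mathbb{S}^2_g,\omega_k)\leq m((\mathbb{S}^2_g)^{n-1},\omega_k^{*})$, and Proposition~\ref{Proposition_Thin_HCF} yields that $\Gamma_k$ is thin.

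There is essentially no genuine obstacle in this corollary; it is a one-line consequence of Definition~\ref{Definition_Thin_Vortex}. The only point deserving a word of care is that the value of $\kappa$ is specified through a choice of $\beta$ and of integers $l_i$, but for a constant tuple the natural choice above already gives $\kappa = \Gamma$ and any other admissible choice gives the same number, so the computation is unambiguous. I would present the direct definitional argument as the proof and, if desired, append the action-theoretic sketch as a remark, since it makes transparent why the corollary is the sharpest possible case of Proposition~\ref{Proposition_Thin_HCF} (equality rather than strict inequality of the two minimal actions).
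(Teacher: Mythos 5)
Your proposal is correct and matches the paper's own proof, which is the same one-line computation: for a constant tuple the definition of $\kappa$ gives $\kappa(\Gamma,\dots,\Gamma)=\Gamma$, hence $\Gamma_k=\kappa(\dots)$ and condition \eqref{Condition_Thin_Vorticity} holds with equality. The action-theoretic sketch is a fine optional remark but is not needed and is not part of the paper's argument.
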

\begin{proof}
Clearly $\forall 1\leq k\leq n$, $\Gamma_k= \kappa(\Gamma_1, \Gamma_2,..., \Gamma_{k-1}, \Gamma_{k+1},...,\Gamma_k)$, the result follows.
\end{proof}

\subsection{Existence of Periodic Orbits}
\label{Chapter_Symplectic_Subsection_Periodic_Orbits}

In this subsection we study existence of periodic orbits on a product manifold of the type $\mathbb{S}^2 \times P$ based on the following lemma, which is proved in  \cite{hofer1992weinstein} by studying a specific module space of non-trivial pseudo-holomorphic spheres of a class $[f] \in \pi_2(\mathbb{S}^2 \times P)$ that has the minimal action among all the homotopy classes. 
\begin{Lem}[Theorem 1.12, \cite{hofer1992weinstein}]
\label{Theorem_Hofer_Viterbo}
Let $(P,\omega_P)$ be a compact symplectic manifold and $\omega$ a volume form on $\mathbb{S}^2$ such that 
\begin{align}
0 \leq \int_{\mathbb{S}^2}\omega \leq m(P,\omega_P)
\end{align}
Suppose $H: \mathbb{S}^2\times P  \rightarrow \mathbb{R}$ is a smooth Hamiltonian such that
\begin{align}
H|_{U_0} = h_0,\quad H|_{U_{\infty}} = h_{\infty}
\end{align}
where $U_{0}$ is a neighbourhood of some point $\mathbf{z}=(z_0,p_0)\in  \mathbb{S}^2 \times P $ and $U_{\infty}$ is a neighbourhood of the set $\{z_{\infty}\} \times P $, moreover $U_{\infty} \cap U_0= \emptyset $. Then the Hamiltonian system $\dot{z} = X_H(z)$  possesses a non-constant $T$-periodic solution $\mathbf{z}^{*}(t)$.
\end{Lem}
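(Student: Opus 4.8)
The plan is to argue by contradiction: assume the Hamiltonian flow of $H$ on $M = \mathbb{S}^2 \times P$ has no non-constant periodic orbit, and derive a contradiction from the presence of pseudo-holomorphic spheres in the ``minimal'' homotopy class $A = [\mathbb{S}^2 \times \{p_0\}] \in \pi_2(M)$. First I would fix an $\omega_H$-compatible almost complex structure $J$ on $M$ (where $\omega_H$ denotes the given product symplectic form) that restricts to the split structure $j \oplus J_P$ on the regions $U_0$ and $U_\infty$ where $H$ is locally constant, so that on those regions the only $J$-holomorphic spheres in class $A$ are the slices $\mathbb{S}^2 \times \{p\}$. The area hypothesis $0 \le \int_{\mathbb{S}^2}\omega \le m(P,\omega_P)$ is exactly what is needed, via the kind of bookkeeping in the proof of Lemma \ref{Lem_minimalsphere}, to guarantee that $A$ has minimal positive symplectic area among all spherical classes of $M$: every non-constant $J$-holomorphic sphere has area $\ge \int_{\mathbb{S}^2}\omega$, with equality forcing the class to be $A$. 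Consequently Gromov compactness applies \emph{without bubbling} — a nodal degeneration of a class-$A$ curve into two or more non-constant components would have total area $\ge 2\int_{\mathbb{S}^2}\omega$, which is impossible — so the moduli space of parametrized $J$-holomorphic spheres in class $A$ is compact, and by a parametrized Sard--Smale argument (as in Theorem \ref{Theorem_parametrized_version_of_Sard_Smale}) it is, for generic $J$, a smooth compact manifold of the expected dimension.

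Next I would set up the perturbed problem. Choosing a cutoff function $\beta$ supported away from $U_0 \cup U_\infty$, I consider, for a parameter $s \in [0,\infty)$, the Floer-type equation $\bar\partial_J u = s\,\beta\,(X_H \circ u)^{0,1}$ for maps $u : S^2 \to M$ in class $A$, with point constraints tying the sphere to $U_0$ and to the fiber $\{z_\infty\} \times P$ inside $U_\infty$. For $s = 0$ this is the unperturbed problem, whose solution space modulo reparametrization is diffeomorphic to $P$ and whose total evaluation map covers $M$; in particular it is non-empty, and a degree/cobordism argument in $s$ shows the perturbed solution space stays non-empty for all $s \ge 0$ as long as it remains compact. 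The crucial point is that if the $s$-family of solutions fails to be compact as $s \to \infty$, then — since bubbling of holomorphic spheres is still excluded by minimality of area, and escape of the ``holomorphic part'' into $U_0$ or $U_\infty$ is prevented because $\beta$ vanishes there — the only remaining source of non-compactness is a concentration of the perturbation term that, after rescaling, produces in the limit a non-constant $T$-periodic orbit of $X_H$ for some $T > 0$, via the usual energy identity together with the removable-singularity-versus-periodic-orbit dichotomy. This contradicts the assumption, proving the lemma.

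The hard part will be this last step: the a priori $C^0$ and energy estimates that drive the bubbling-off analysis for the perturbed equation, and the identification of the limiting object as a genuine \emph{non-constant} periodic orbit of $X_H$ rather than a constant map or a broken configuration — precisely the technical core of the argument of Hofer and Viterbo in \cite{hofer1992weinstein}. I would follow their scheme, exploiting that $H$ is locally constant near the point of $U_0$ and near the \emph{whole} fiber $\{z_\infty\} \times P$ to confine all concentration to the region $\{\beta \ne 0\}$, together with the transversality of the relevant moduli spaces to keep the degree count meaningful; the geometric input on the minimal sphere area is then what simultaneously rules out holomorphic bubbling throughout the deformation.
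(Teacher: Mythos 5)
This lemma is not proved in the paper at all: it is imported verbatim as Theorem 1.12 of Hofer--Viterbo \cite{hofer1992weinstein}, with only the one-line remark that the proof studies a moduli space of non-trivial pseudo-holomorphic spheres in the homotopy class of minimal action among all homotopy classes. Your sketch is therefore not competing with an argument in the text but reconstructing the cited one, and its overall architecture --- the minimal-area class $A=[\mathbb{S}^2\times\{p_0\}]$, exclusion of bubbling, a parametrized perturbed Cauchy--Riemann equation with a degree/cobordism argument in the parameter $s$, and a blow-up analysis as $s\to\infty$ producing a non-constant periodic orbit --- is the correct Hofer--Viterbo scheme, consistent with what the paper attributes to the reference.

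One step as written is too quick, and it is worth flagging because it is precisely where the technical weight of \cite{hofer1992weinstein} sits. From $0\le\int_{\mathbb{S}^2}\omega\le m(P,\omega_P)$ it does \emph{not} follow that every non-constant $J$-holomorphic sphere has area at least $\int_{\mathbb{S}^2}\omega$: a component in a class $(k,B)\in\mathbb{Z}\oplus\pi_2(P)$ with $k\ge 2$ and $\omega_P(B)<0$ can have arbitrarily small positive area, and a two-component cusp curve in class $A$ with components $(2,B)$ and $(-1,-B)$, $\omega_P(B)\in(-2\int_{\mathbb{S}^2}\omega,-\int_{\mathbb{S}^2}\omega)$, is not excluded by area bookkeeping alone. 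Ruling out such configurations requires control on the almost complex structure (for split $J$ every component projects holomorphically to $\mathbb{S}^2$ and so has non-negative degree, after which the accounting closes, but the $J$ you need is only split near $U_0\cup U_\infty$) and is part of the genuine content of the original proof, not the routine computation of Lemma \ref{Lem_minimalsphere}. Since you explicitly defer the hard analysis to Hofer--Viterbo, the proposal is an acceptable outline of the cited argument, but the ``no bubbling by minimality of area'' step should not be presented as immediate.
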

Our strategy is to let $((\mathbb{S}^2_g)^{n-1},\omega_k^{*})$ play the role of $(P,\omega_P)$ in lemma \ref{Theorem_Hofer_Viterbo}. Recall the fact that the closed characteristic on a hypersurface $S_c$ of a symplectic manifold does not depend on the Hamiltonian that defining it. Bearing this in mind one can prove the following theorem:
\begin{theorem}
\label{Thm_Main_tool}
Consider the two symplectic manifold $(\mathbb{S}^2_g,\sigma_1)$ and $((\mathbb{S}^2_g)^{n-1},\sigma_2)$. Let $S_c=H^{-1}(c)$ be a regular and compact hyper-surface of some Hamiltonian function $H: (\mathbb{S}^2)^{n}\rightarrow \mathbb{R}$. Consider the following conditions:
\begin{enumerate}
\item There holds the inequality \footnotemark{}
\begin{align} 
\label{Formula_symplectic_bound}
0 \leq \int_{\mathbb{S}_g^2}\sigma_1 \leq m((\mathbb{S}^2_g)^{n-1},\sigma_2) \tag{S1};
\end{align}
\item There exists a point $z_0\in \mathbb{S}_g^2$ s.t. 
\begin{align}
\label{Formula_Riemannian_separation}
S_c \cap (\{z_0\}\times (\mathbb{S}^2_g)^{n-1}) = \emptyset \tag{S2};
\end{align}
\end{enumerate}
Then there exists a sequence $c_i \rightarrow c$ s.t. each $S_{c_i}$ carries a non-constant periodic Hamiltonian trajectory.
\end{theorem}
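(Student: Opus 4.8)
The plan is to reduce Theorem \ref{Thm_Main_tool} to Lemma \ref{Theorem_Hofer_Viterbo} by surgery on the Hamiltonian: I would replace $H$, near $S_c$, by a Hamiltonian $\tilde H$ that is a function of $H$ alone, constant (hence dynamically trivial) outside a thin shell around $S_c$, and constant near both a point and the slice $\{z_*\}\times(\mathbb{S}^2_g)^{n-1}$ furnished by \eqref{Formula_Riemannian_separation}, so that Lemma \ref{Theorem_Hofer_Viterbo} applies verbatim and the periodic orbit it produces is automatically trapped in the shell. Write the phase space as $\mathbb{S}^2_g\times(\mathbb{S}^2_g)^{n-1}$ with symplectic form $\sigma_1\oplus\sigma_2$; then $(P,\omega_P):=((\mathbb{S}^2_g)^{n-1},\sigma_2)$ plays the role of the compact factor, $(\mathbb{S}^2_g,\sigma_1)$ that of the sphere factor, and hypothesis \eqref{Formula_symplectic_bound} is exactly the quantitative bound $0\le\int_{\mathbb{S}^2}\sigma_1\le m(P,\omega_P)$ required in the lemma.

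First I would set up the geometry near $S_c$. Since $c$ is a regular value and $S_c$ is compact, there is $\epsilon_0>0$ such that $U_{c,\epsilon_0}(H)=H^{-1}((c-\epsilon_0,c+\epsilon_0))$ consists only of regular points and is a tubular neighbourhood of $S_c$ foliated by the compact hypersurfaces $S_{c+\delta}$, $|\delta|<\epsilon_0$. Let $\Sigma=\{z_*\}\times(\mathbb{S}^2_g)^{n-1}$ be the slice from \eqref{Formula_Riemannian_separation}. As $\Sigma$ is connected and disjoint from $S_c=H^{-1}(c)$, the function $H|_\Sigma$ never attains $c$, so $\Sigma$ lies entirely in $\{H<c\}$ or entirely in $\{H>c\}$; replacing $(H,c)$ by $(-H,-c)$ if necessary --- which changes neither $S_c$ nor its characteristic foliation --- I may assume $\Sigma\subset\{H>c\}$, whence by compactness $\min_\Sigma H>c$, so $\Sigma\subset\{H>c+\epsilon\}$ for all $\epsilon$ below some $0<\epsilon_1\le\epsilon_0$. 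Moreover, since $dH\ne0$ on $S_c$, the set $\{H<c-\epsilon\}$ is nonempty for small $\epsilon$; fix a point $\mathbf z_0$ in it.

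Next, for each $\epsilon\in(0,\epsilon_1)$ I would choose $\phi\in\mathcal C^\infty(\mathbb{R},\mathbb{R})$ which is constant equal to some $h_0$ on $(-\infty,c-\epsilon]$, constant equal to some $h_\infty\ne h_0$ on $[c+\epsilon,\infty)$, and strictly monotone on $[c-\epsilon,c+\epsilon]$, and set $\tilde H=\phi\circ H$. Then $\tilde H\equiv h_0$ on the open set $U_0:=\{H<c-\epsilon\}\ni\mathbf z_0$ and $\tilde H\equiv h_\infty$ on the open set $U_\infty:=\{H>c+\epsilon\}\supset\Sigma$, with $U_0\cap U_\infty=\emptyset$; so all hypotheses of Lemma \ref{Theorem_Hofer_Viterbo} hold (with $z_\infty=z_*$), and it yields a non-constant periodic orbit $\mathbf z^*(t)$ of $X_{\tilde H}$. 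Now $d\tilde H=\phi'(H)\,dH$, so $X_{\tilde H}=\phi'(H)\,X_H$, which vanishes identically outside $U_{c,\epsilon}(H)$; since a non-constant periodic orbit cannot pass through a zero of its defining vector field, $\mathbf z^*(t)$ stays inside $U_{c,\epsilon}(H)$, and being confined to a level set of the conserved quantity $\tilde H$ it lies on a single leaf $S_{c+\delta_\epsilon}$ with $|\delta_\epsilon|<\epsilon$ (strict monotonicity of $\phi$ making $\delta_\epsilon$ unique). On that leaf $X_{\tilde H}=\phi'(c+\delta_\epsilon)X_H$ is a nonzero constant multiple of $X_H$, so after reparametrising time $\mathbf z^*$ is a non-constant periodic Hamiltonian trajectory of $H$ on $S_{c+\delta_\epsilon}$. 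Letting $\epsilon=1/i\to0$ gives $c_i:=c+\delta_{1/i}\to c$ with the asserted property.

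The only delicate point is the localisation of the Hofer--Viterbo orbit inside the shell $U_{c,\epsilon}(H)$: this is what forces it onto a level set near $c$, and it works precisely because $\tilde H$ was taken to be a function of $H$ alone, so that $X_{\tilde H}$ is a reparametrisation of $X_H$ on each leaf and vanishes off the shell --- together with the elementary fact that a non-constant periodic orbit avoids equilibria, and the standard observation (recalled before the statement) that the closed characteristics of a regular hypersurface are independent of the defining Hamiltonian. The genuine analytic substance --- the non-trivial pseudo-holomorphic sphere and the periodic orbit for cut-off Hamiltonians --- is entirely imported from Lemma \ref{Theorem_Hofer_Viterbo}, so beyond this construction the write-up is bookkeeping, most of which goes into confirming that $U_{c,\epsilon_0}(H)$ is honestly a foliated tube and that the auxiliary open sets are genuinely disjoint.
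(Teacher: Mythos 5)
Your proposal is correct and rests on the same engine as the paper: reduce to Lemma \ref{Theorem_Hofer_Viterbo} by building a cut-off Hamiltonian that is constant near a point and near a slice $\{z_*\}\times(\mathbb{S}^2_g)^{n-1}$, then localise the resulting orbit in the shell $U_{c,\epsilon}(H)$ and let $\epsilon\to 0$. Where you genuinely diverge is in how the two constant regions $U_0$ and $U_\infty$ are produced. The paper first reduces to the case where $S_c$ is connected, then invokes the K\"unneth formula to get $H_1((\mathbb{S}^2)^n,\mathbb{Z}_2)=0$ and the generalised Jordan--Brouwer theorem to conclude that the complement of the shell has exactly two components, one of which contains the slice; the cut-off $F$ is then defined piecewise on these components. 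You instead observe that the slice is connected and disjoint from $H^{-1}(c)$, hence lies entirely in $\{H>c\}$ (after possibly flipping the sign of $H$), and simply take $U_0=\{H<c-\epsilon\}$ and $U_\infty=\{H>c+\epsilon\}$ with $\tilde H=\phi\circ H$ defined globally. This buys you two things: you never need the algebraic-topology input, and you never need the connectedness reduction (whose implementation in the paper, via $\tilde H=H\cdot K$, is itself a little delicate since the product can create spurious level-$c$ points in the transition region). The one thing the paper's component-based formulation handles slightly more gracefully is a Hamiltonian that is only defined away from a singular set, since $F$ is prescribed to be literally constant on each component rather than obtained by composing with $H$; in your version one should note that $\phi\circ H$ extends smoothly by the constant $h_\infty$ over any set where $H$ blows up to $+\infty$, which is what happens in the intended application to $H_g$. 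Your localisation step (conservation of $\tilde H$ plus strict monotonicity of $\phi$ forcing the non-constant orbit onto a single regular leaf $S_{c+\delta}$, on which $X_{\tilde H}$ is a nonzero constant multiple of $X_H$) is the same bookkeeping the paper performs implicitly, and is stated more carefully in your write-up.
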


\footnotetext{In corollary 1.13 Hofer and Viterbo has assumed that $\pi_{2}(P)=\{0\}$, in this case by convention $m(P,\omega_p)=\infty$ thus \textit{\`a fortiori} guarantees the validity of condition \eqref{Formula_symplectic_bound}}

\begin{proof}
First note that we can always suppose w.l.o.g. that $S_c$ is connected. To see this, suppose that $S_c$ has multiple components. We can take a component $\tilde{S}_c$ of $S_c$. There exists an open neighbourhood $V$ of $\tilde{S}_c$ disjoint from both other components of $S_c$ and $\{z_0\}\times (\mathbb{S}^2_g)^{n-1}$, and a smooth function $K:\mathbb{S}^2\times P\rightarrow \mathbb{R}$ s.t. 
\begin{align*}
    K = 
    \begin{cases}
      1 \text{ in } \tilde{S}_c;\\
      0 \text{ in } (\mathbb{S}^2\times P) \setminus V
    \end{cases}
\end{align*}
Let $\tilde{H}= H\cdot K$ and $\tilde{S}_c=\tilde{H}^{-1}(c)$ will be a compact and connected hyper-surface. 

Since the energy hyper-surface $S_c$ is compact and regular (i.e. $dH\neq 0$ on $S_c$), it is a $(2n-1)$-dimensional embedded submanifold. It follows by continuity that for small $\epsilon>0$, $U_{c,\epsilon}(H)$ is diffeomorphic to a $2n$-dimensional sub-manifold of $(\mathbb{S}^2_g)^{n}$ and $U_{c,\epsilon}(H)$ can be contracted to $S_c$.

First we show that $U_{c,\epsilon}(H)$ separates the manifold $(\mathbb{S}^2_g)^n$ into two parts. Since $H_1(\mathbb{S}^2, \mathbb{Z}_2) = 0$, by K\"unneth theorem, 
\begin{align*}
H_{*}((\mathbb{S}^2)^n, \mathbb{Z}_2) = H_{*}(\mathbb{S}^2, \mathbb{Z}_2) \otimes H_{*}(\mathbb{S}^2, \mathbb{Z}_2) \otimes ...\otimes H_{*}(\mathbb{S}^2, \mathbb{Z}_2) 
\end{align*}
Thus $H_1((\mathbb{S}^2)^n, \mathbb{Z}_2) = 0$. This implies that $(\mathbb{S}_g^2)^n \setminus U_{c,\epsilon}(H)$ has thus two components, according to the generalised Jordan-Brower theorem (\cite[corollary 8.8]{bredon2013topology}). The set $\{z_0\}\times (\mathbb{S}_g^2)^{n-1}$ is connected and will find itself in one component, namely $U_{\infty}$, and the other component is denoted by $U_0$.

The rest is the same as that for corollary 1.3 in \cite{hofer1992weinstein}: We choose a smooth function $\phi(s) : \mathbb{R} \rightarrow \mathbb{R}$ satisfying
\begin{align*}
\phi(s) =\begin{cases}
1, \quad\forall  s\geq \frac{1}{2}\epsilon\\
0, \quad\forall  s\leq \frac{-1}{2}\epsilon
\end{cases}\\
\frac{d\phi}{ds}>0, \quad \forall -\frac{\epsilon}{2} \leq s \leq  \frac{\epsilon}{2} 
\end{align*}
and construct the Hamiltonian as 
\begin{align*}
F(\mathbf{z}) = \begin{cases}
 \phi( H(z)-c), \quad \forall   \mathbf{z}\in U_{c,\epsilon} \\
0, \quad \forall  \mathbf{z}\in U_{0}\\
1, \quad \forall  \mathbf{z}\in U_{\infty}
 \end{cases}
\end{align*}
It turns out that $F(\mathbf{z})$ satisfies the assumption in theorem \ref{Theorem_Hofer_Viterbo}. Thus $F(\mathbf{z})$ possesses a periodic orbit $\mathbf{z}^{*}$ on $S_{c+\delta^*} \subset U_{c,\epsilon}$ for some $-\epsilon < \delta^{*} < \epsilon$, so does $H$.

Finally since $\epsilon$ is arbitrary, we can take $\epsilon_0 = \epsilon$ and $\displaystyle \epsilon_{n+1} = \frac{\epsilon_n}{2}$. The above argument then gives us a periodic orbit on $S_{c+\delta_n^{*}}$ for each $\delta^{*}_n$ as $\delta^{*}_n \rightarrow 0$. We conclude that the Hamiltonian $H$ possesses a sequence of periodic solutions $\mathbf{z}^*_{n}(t)$ with $H(\mathbf{z}^*_{n}(t)) \rightarrow c$.  
\end{proof}

Consider now the motion of $n$ vortices with positive vorticity on $\mathbb{S}^2_g$ equipped with an arbitrary Riemannian metric $g$. We basically only need to verify that theorem \ref{Thm_Main_tool} indeed applies. Let $\mathbf{z}=(z_1,z_2,...,z_n)\in \Sd$. Recall that when two vortices $z_i,z_j$ are approaching each other, the Green function $G(z_i,z_j)$ behaves like $-\log{d_g(z_i,z_j)}$ and goes to positive infinity. On the other hand since $\mathbb{S}^2$ is compact, $G(z_i,z_j)$ is bounded from below, so is the Robin mass function. To summarise, if one fixes $z_i$ and leaves all the other vortices $z_j, j\neq i$ to move freely, $H_g$ is bounded from below and always achieves its minimum, meanwhile it is never bounded from above. As a result one can define the following two values for each $1\leq i\leq n$:
\begin{align}
c_1(i, g) = \min_{\eta\in \mathbb{S}^2_g} \min_{ \substack{\mathbf{z} \in \Sd \\ z_i = \eta}    }  H_g(\mathbf{z}),\quad c_2(i, g) = \max_{\eta\in \mathbb{S}^2_g} \min_{ \substack{\mathbf{z} \in \Sd \\ z_i = \eta}    }  H_g(\mathbf{z})
\end{align}
Clearly $c_1(i,g)$ does not depend on $i$ and is the global minimum value of $H_g$. Moreover $c_1(i,g)\leq c_2(i,g)$ always holds. We remind the reader that there exists indeed cases where $c_1(i, g)= c_2(i, g)$, for instance when the $n$ identical vortices move on $\mathbb{S}^2_{g_0}$. In this case $c_1(i, g)= c_2(i, g)$ due to the $\mathbf{SO}(3)$ symmetry. However one sees that: 
\begin{Lem}
Fix $\mathbf{\Gamma} = (\Gamma_1,\Gamma_2,...,\Gamma_n) \in \mathbb{R}^n_{+}$. For $g(\rho)$ where $\rho \in \mathcal{D}_M$, $c_1(i,g) < c_2(i,g)$ strictly, $\forall 1 \leq i \leq n$.
\end{Lem}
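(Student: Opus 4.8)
The plan is to show that under a generic conformal perturbation $\rho\in\mathcal{D}_M$, the function $\eta\mapsto m(\eta):=\min_{\mathbf{z}\in(\mathbb{S}^2_g)^n_0,\,z_i=\eta}H_g(\mathbf{z})$ cannot be constant on $\mathbb{S}^2_g$. First I would record the elementary structural facts: for each fixed $\eta$ the minimum $m(\eta)$ is attained (as explained just before the statement, $H_g$ is proper toward the collision set in the remaining variables because the Green function blows up to $+\infty$ at collisions while staying bounded below), and $m$ is continuous on the compact manifold $\mathbb{S}^2_g$. Hence if $m$ were constant, say $m\equiv c_1$, then $c_1$ would be the global minimum of $H_g$ and the whole set $\{z_i=\eta\}$ would meet the minimum level for \emph{every} $\eta$; consequently the set of global minimisers of $H_g$ would surject onto $\mathbb{S}^2_g$ under the $i$-th coordinate projection.

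The next step is to derive a contradiction with Theorem~\ref{Theorem_Main_Morse}. Since $\rho\in\mathcal{D}_M$, $H_g$ is a Morse function on $(\mathbb{S}^2_g)^n_0$ with only finitely many critical points; in particular it has only finitely many global minimisers. A finite set cannot surject onto $\mathbb{S}^2_g$ via a continuous map, so $m$ cannot be constant, giving $c_1(i,g)<c_2(i,g)$. I would phrase this carefully: for each $\eta$, any $\mathbf{z}$ realising $m(\eta)$ is a critical point of the \emph{restricted} function $H_g|_{\{z_i=\eta\}}$, but to invoke finiteness I want it to be a critical point of $H_g$ on the full manifold. This is true precisely at $\eta$ where $m$ attains its own minimum $c_1$: there the constrained minimiser is an unconstrained minimiser, hence a genuine critical point of $H_g$. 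So the clean argument is: the set $M_1=\{\eta:m(\eta)=c_1\}$ is closed and nonempty; if $m$ is constant then $M_1=\mathbb{S}^2_g$; but each $\eta\in M_1$ is the $i$-th coordinate of a global minimiser of $H_g$, and there are only finitely many such minimisers, so $M_1$ is finite — contradiction. This also shows $M_1\neq\mathbb{S}^2_g$ in general, which combined with continuity of $m$ and connectedness of $\mathbb{S}^2_g$ forces $m$ to take values strictly above $c_1$ somewhere, i.e.\ $c_2(i,g)=\max m>c_1=c_1(i,g)$.

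The main obstacle I anticipate is the measurability/attainment and critical-point bookkeeping: one must be sure the inner minimum is genuinely attained (not just an infimum) so that minimisers exist and are critical points, and one must correctly match ``constrained critical point on the slice'' with ``unconstrained critical point of $H_g$'' — the matching only holds automatically at the global minimum of $m$, which is why the argument is routed through $M_1$ rather than through an arbitrary level set. A secondary point to be careful about is that $\mathcal{D}_M\subset\mathcal{C}^\infty$ while the transversality arguments were run in $\mathcal{C}^2$; since Theorem~\ref{Theorem_Main_Morse} is already stated for $\rho\in\mathcal{D}_M$ this is not an issue here, but I would cite it rather than re-derive it. Finally I would remark that $c_1(i,g)$ is independent of $i$ (it is the global minimum of the symmetric-in-structure $H_g$), so it suffices to prove the strict inequality for one index, and the statement ``$\forall\,1\le i\le n$'' follows; one does need $\Gamma_i>0$ throughout so that the blow-up of $H_g$ at collisions is toward $+\infty$, ensuring the inner minimum over the collision-free slice stays finite and attained.
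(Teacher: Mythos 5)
Your proposal is correct and follows essentially the same route as the paper: assume $c_1(i,g)=c_2(i,g)$, observe that then every $\zeta\in\mathbb{S}^2_g$ is the $i$-th coordinate of a global (hence unconstrained) minimiser of $H_g$, which is a critical point, and contradict the finiteness of the critical set from Theorem~\ref{Theorem_Main_Morse}. Your extra care about attainment of the inner minimum and about passing from constrained to unconstrained critical points (valid precisely at the global minimum level) matches what the paper handles in the discussion preceding the lemma and implicitly in its proof.
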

\begin{proof}
Clearly by definition $c_1(i,g) \leq c_2(i,g)$. If $c_1(i,g) = c_2(i,g)$ for some $1\leq i \leq n$ and $g\in [g]$, it means that 
\begin{align*}
\forall \zeta\in \mathbb{S}^2_g,\quad  \min_{\eta\in \mathbb{S}^2_g} \min_{ \substack{\mathbf{z} \in \Sd \\ z_i = \eta}    }  H_g(\mathbf{z})=\min_{ \substack{\mathbf{z} \in \Sd \\ z_i = \zeta}    }  H_g(\mathbf{z}) = \max_{\eta\in \mathbb{S}^2_g} \min_{ \substack{\mathbf{z} \in \Sd \\ z_i = \eta}    }  H_g(\mathbf{z})
\end{align*}
In other words for any $\zeta \in \mathbb{S}^2_g$, one can find a configuration $\mathbf{z}_{\zeta}\in \Sd$ with $z_i = \zeta$ s.t.  $H_g(\mathbf{z})$ achieves its global minimum at $\mathbf{z}_{\zeta}$. As a result $\mathbf{z}_{\zeta}$ will be a critical point. Now since $\zeta\in \mathbb{S}^2_g$ can be an arbitrary point, we will find thus infinitely many fixed points as $\zeta$ goes over $\mathbb{S}^2_g$ . 

But for $g=g(\rho)$ with $\rho \in \mathcal{D}_M$ this is impossible due to theorem \ref{Theorem_Main_Morse}, since all the vorticities are positive thus condition \eqref{Condition_No_Collision} holds. The lemma is proved.  
\end{proof}

The above lemma implies that for certain energy, its energy hyper-surface satisfies some separation condition (see figure \ref{Fig_separation_condition}), which is the last ingredient towards the prove of theorem \ref{Theorem_Main_Periodic_Orbits}:

\paragraph{\textbf{Proof of Theorem \ref{Theorem_Main_Periodic_Orbits}}:}
Pick up a $g(\rho)$ with $\rho \in \mathcal{D}_{M}$, the Hamiltonian function $H_g$ becomes a Morse function, as a result $\forall 1\leq i\leq n,c_1(i,g) < c_2(i,g) $. Consider the thin vortex $z_k$ with vorticity $\Gamma_k$ and the phase space as the symplectic manifold $(\mathbb{S}^2_g \times (\mathbb{S}^2_g)^{n-1}, \omega_k \oplus \omega_k^{*}$). By proposition \ref{Proposition_Thin_HCF}, one has that 
\begin{align}
\label{Formula_symplectic_bound_spheres}
0 \leq \int_{\mathbb{S}_g^2}\omega_k \leq m((\mathbb{S}_g^2)^{n-1},\omega_k^{*}) 
\end{align}

Next take any $ c \in \mathbb{R}$ s.t. $c_1(k,g)<c<c_2(k,g)$. By the definition of $c_1(k,g)$ and $c_2(k,g)$, there exists a point $z_{c}\in \mathbb{S}^2$ s.t. 
\begin{align}
c <\min_{ \substack{\mathbf{z} \in \Sd \\ z_k = z_c}    }  H_g(\mathbf{z}) < c_2(k,g)
\end{align}

\begin{figure}[ht]
\begin{center}
\includegraphics[width=80mm,scale=0.5]{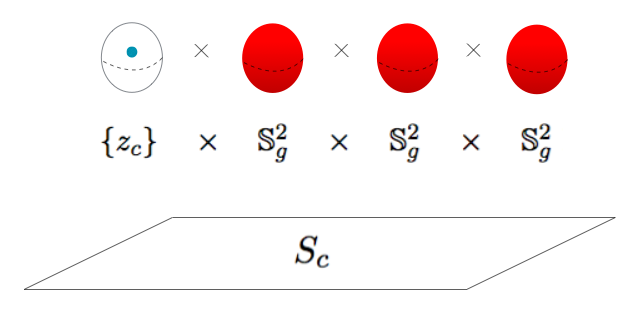}
\newline
\end{center}
\small
\textit{Suppose that $n=4$ with $z_1$ being a thin vortex, and that $g$ is s.t. $c_1(1,g)< c_2(1,g)$. Then for any $c\in\mathbb{R}$ s.t. $c_1(1,g)< c<c_2(1,g) $, there exists a point $z_c\in \mathbb{S}^2_g$ s.t. the whole set $\{z_c\}\times\mathbb{S}_g^2  \times \mathbb{S}_g^2   \times \mathbb{S}_g^2 $ locates on one side of the energy surface $S_c$ }
\caption{The Separation Condition}
\label{Fig_separation_condition}
\end{figure}

In other words (see figure \ref{Fig_separation_condition}), 
\begin{align}
S_c \cap (\{z_c\} \times (\mathbb{S}_g^2)^{n-1})  = \emptyset
\end{align}
The result follows by applying theorem \ref{Thm_Main_tool}. \QED

\subsection{Vortex Dipole and Contact Structure}
\label{Chapter_Symplectic_Subsection_Contact_Structure}

Theorem \ref{Theorem_Main_Periodic_Orbits} implies the existence of a periodic orbit near any given energy level $c_1(i,g)<c<c_2(i,g)$. But it is unknown whether on $S_c=H^{-1}_g(c)$ itself there exists one unless more structure, for instance the contact structure, could be added to $S_c$. If $S_c$ is of contact type, then it is \`a priori stable and together with theorem \ref{Theorem_Main_Periodic_Orbits} one will find a periodic orbit on $S_c$ itself. The existence of a periodic orbit on a regular and compact hyper-surface of contact type is known as the Weinstein's conjecture. It has first been proved by Viterbo \cite{viterbo1987proof} for $\mathbb{R}^{2n}$ and later on for various situations, see \cite{ginzburg2005weinstein} for a review on this subject.

The existence of a contact structure on $S_c$ can be shown by exhibiting a transversal Liouville vector field in the neighbourhood of $S_c$:
\begin{Pro}[\cite{weinstein1979hypotheses}]
\label{Proposition: Liouville Vector Field}
An energy hyper-surface $S_c\subset M$ is of contact type if and only if there exists a vector field $X$, defined on a neighbourhood $U$ of $S_c$, satisfying
\begin{enumerate}
\item $L_X\omega = \omega$ on $U$;
\item $X(z) \notin \mathcal{T}_zS_c, \forall z\in S_c$.
\end{enumerate} 
\end{Pro}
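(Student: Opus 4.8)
Since the statement is the classical Liouville-field characterisation of contact type (due to Weinstein), the plan is to prove the two implications via the standard dictionary between Liouville vector fields defined near $S_c$ and primitives of $\omega$ defined near $S_c$, checking along the way that the transversality condition~(2) is exactly the non-degeneracy of the $1$-form that such a Liouville field induces on $S_c$. Throughout I write $\dim M = 2m$, so $\dim S_c = 2m-1$, let $\iota\colon S_c\hookrightarrow M$ be the inclusion, and recall that $S_c$ being \emph{of contact type} means it carries a $1$-form $\alpha$ with $d\alpha=\iota^{*}\omega$ and $\alpha\wedge(d\alpha)^{m-1}$ nowhere zero.

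($\Rightarrow$) Given $X$ on a neighbourhood $U$ of $S_c$ with $L_X\omega=\omega$, set $\lambda:=i_X\omega$; since $d\omega=0$, Cartan's formula gives $d\lambda=L_X\omega=\omega$ on $U$, so $\lambda$ is a local primitive of $\omega$. Put $\alpha:=\iota^{*}\lambda$; then $d\alpha=\iota^{*}\omega$. To see that $\alpha$ is a contact form, contract the volume form $\omega^{m}$ with $X$: the Leibniz rule for interior products gives $i_X(\omega^{m})=m\,\lambda\wedge\omega^{m-1}$, whose pullback to $S_c$ is $m\,\alpha\wedge(d\alpha)^{m-1}$. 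On the other hand, for $z\in S_c$ and any basis $v_1,\dots,v_{2m-1}$ of $\mathcal{T}_zS_c$, the hypothesis $X(z)\notin\mathcal{T}_zS_c$ makes $\{X(z),v_1,\dots,v_{2m-1}\}$ a basis of $\mathcal{T}_zM$, so $\omega^{m}_z(X(z),v_1,\dots,v_{2m-1})\neq 0$; thus $\iota^{*}(i_X\omega^{m})$ is a volume form on $S_c$, hence $\alpha\wedge(d\alpha)^{m-1}$ never vanishes and $S_c$ is of contact type.

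($\Leftarrow$) Given a contact form $\alpha$ on $S_c$ with $d\alpha=\iota^{*}\omega$, the first step is to extend $\alpha$ to a primitive of $\omega$ near $S_c$. Choose a tubular neighbourhood $U\cong S_c\times(-\varepsilon,\varepsilon)$ with smooth retraction $r\colon U\to S_c$; since $U$ deformation retracts onto $S_c$ and $[\iota^{*}\omega]=[d\alpha]=0$ in $H^{2}_{\mathrm{dR}}(S_c)$, the class $[\omega|_U]$ vanishes, so $\omega=d\mu$ on $U$ for some $1$-form $\mu$. Then $\mu|_{S_c}-\alpha$ is closed on $S_c$, and $\lambda:=\mu-r^{*}(\mu|_{S_c}-\alpha)$ satisfies $d\lambda=\omega$ on $U$ together with $\iota^{*}\lambda=\alpha$. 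Define $X$ on $U$ by $i_X\omega=\lambda$, which is well defined and unique because $\omega$ is non-degenerate; then $L_X\omega=d\,i_X\omega=d\lambda=\omega$, which is condition~(1). For condition~(2), suppose $X(z)\in\mathcal{T}_zS_c$ for some $z\in S_c$; then for all $v\in\mathcal{T}_zS_c$,
\begin{align*}
\alpha(v)=\lambda_z(v)=\omega_z(X(z),v)=(\iota^{*}\omega)_z(X(z),v)=(d\alpha)_z(X(z),v),
\end{align*}
so $\alpha_z=i_{X(z)}(d\alpha)_z$ as covectors on $\mathcal{T}_zS_c$. But the contact condition says the kernel line $\ell_z$ of $(d\alpha)_z$ on $\mathcal{T}_zS_c$ satisfies $\alpha_z(\ell_z)\neq0$, whereas every covector of the form $i_u(d\alpha)_z$ annihilates $\ell_z$ — a contradiction. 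Hence $X$ is transverse to $S_c$, which is condition~(2).

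The only steps requiring genuine (but mild) care are the cohomological one — producing a primitive of $\omega$ on an entire neighbourhood of $S_c$ and arranging that it restricts to the prescribed $\alpha$ — and the elementary symplectic-linear-algebra translation of ``$X$ transverse to $S_c$'' into ``$\alpha\wedge(d\alpha)^{m-1}$ nowhere zero''; there is no deep obstacle here, and since the result is classical one may alternatively just invoke \cite{weinstein1979hypotheses} as a black box, as is done in the sequel.
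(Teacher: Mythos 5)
Your proof is correct. Note, however, that the paper itself offers no proof of this proposition: it is quoted verbatim from Weinstein's 1979 paper and used as a black box, so there is nothing internal to compare your argument against. What you have written is the standard argument — the dictionary $\lambda = i_X\omega$ between Liouville fields and primitives of $\omega$, the identity $i_X(\omega^m)=m\,\lambda\wedge\omega^{m-1}$ translating transversality of $X$ into non-vanishing of $\alpha\wedge(d\alpha)^{m-1}$, and the tubular-neighbourhood extension of $\alpha$ to a primitive for the converse — and each step checks out, including the linear-algebra point that $i_{X(z)}(d\alpha)_z$ must annihilate the characteristic line while a contact form cannot. The only (harmless) implicit ingredient is that the kernel of $\iota^{*}\omega$ on the hyperplane $\mathcal{T}_zS_c$ is exactly one-dimensional, which is standard symplectic linear algebra.
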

We restrict ourselves to the case of identical two vortices, i.e., $\Gamma_1=\Gamma_2=1$ to simplify our discussion. The $2$-vortex problem with general vorticity can be treated similarly. We can also suppose that the sphere is centered and has radius $1$.
\begin{figure}[ht]
\begin{center}
\includegraphics[width=120mm,scale=0.5]{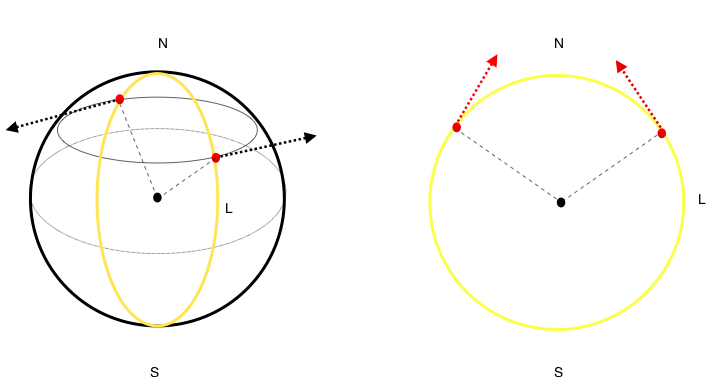}
\newline
\end{center}
\small
\textit{The yellow great circle is the Prime meridian. Two red points represent $z^{\alpha}_{1},z^{\alpha}_{2}$ respectively. Under the dynamics, $z^{\alpha}_{1},z^{\alpha}_{2}$ will rotate at uniform speed on the same latitude, represented by the black arrays on the left. The red arrays on the right represent the Liouville vector field at this configuration.}
\caption{Liouville Vector Field for Hyper-surface of $S^2_{g_0}$}
\label{Fig_contact_type}
\end{figure}
\paragraph{}First We fix artificially a north pole $N$, a south pole $S$ and a prime meridian (The Greenwich, in yellow in figure \ref{Fig_contact_type} ) $L$ on $\mathbb{S}^2$. For any $\alpha\in(c-\epsilon,c+\epsilon)$, there is a unique point $\mathbf{z}^{\alpha} =(z^{\alpha}_{1},z^{\alpha}_{2}) \in H^{-1}(\alpha)$ s.t. $z^{\alpha}_{1},z^{\alpha}_{2}\in L$ and the $N$ is on the middle point of the big arc connecting $z^{\alpha}_{1},z^{\alpha}_{2}$. Next for these $\mathbf{z}^{\alpha}$ on $L$ we assign the tangent vector field pointing to the north pole at each vortex (red arrays in figure \ref{Fig_contact_type} right), Such a tangent vectors tend to drag two vortices closer to each other along $L$ towards the north pole N, hence will increase the energy. As a result they must be transversal to the energy hypersurface $S_{\alpha}$. Finally for configurations $w\in S_{\alpha}$ on $S_{\alpha}$ other than this configuration, we can use elements of $\mathbf{SO}(3)$ to push the vector assigned on $\mathbf{z}_{\alpha}$ to a tangent vector of $w$. The detailed description of this idea is provided in appendix \ref{App_Chapter_Contact_Round_Sphere}.
\begin{Lem}
\label{Lem_contact_unit_sphere}
Consider the identical $2$-vortex on $\mathbb{S}^2_{g_0}$ with $\Gamma_1 = \Gamma_2=1$. Let 
\begin{align*}
c> \min_{z_1,z_2\in \mathbb{S}^{2}, z_1 \neq z_2}H_{g_0}=-\frac{1}{2\pi}log 2
\end{align*}Then the energy-surface $H^{-1}(c)$ is of contact type.
\end{Lem}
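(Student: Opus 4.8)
The plan is to construct a $1$-form $\lambda$ on the level set $S_c:=H_{g_0}^{-1}(c)$ satisfying $d\lambda=\omega|_{S_c}$, where $\omega=\omega_{g_0}\oplus\omega_{g_0}$, which is moreover a contact form; by the standard symplectization neighbourhood theorem this is equivalent to producing the transverse Liouville field required by Proposition~\ref{Proposition: Liouville Vector Field}. First I would pin down $S_c$ geometrically. By \eqref{Formula_Vortex_Round_Sphere} with $\Gamma_1=\Gamma_2=1$, $H_{g_0}(z_1,z_2)=-\tfrac1{2\pi}\log\norm{z_1-z_2}$ depends only on the spherical distance $\theta_0=\theta_0(c)\in(0,\pi]$ between the two vortices, and the hypothesis $c>-\tfrac1{2\pi}\log 2$ is precisely $\theta_0<\pi$; since $\norm{z_1-z_2}$ has no critical points on $\{0<\norm{z_1-z_2}<2\}$, the set $S_c=\{(z_1,z_2):\angle(z_1,z_2)=\theta_0\}$ is a regular, compact, connected hypersurface, and $(z_1,z_2)\mapsto\bigl(z_1,(z_2-\cos\theta_0\,z_1)/\sin\theta_0\bigr)$ identifies it with the unit tangent bundle $UT\mathbb{S}^2\cong\mathbb{RP}^3$. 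Consequently $H^1(S_c;\mathbb{R})=H^2(S_c;\mathbb{R})=0$, so $\omega|_{S_c}$ is exact and any two of its primitives differ by an exact $1$-form. Finally, $i_{X_{H_{g_0}}}\omega=dH_{g_0}$ vanishes on $TS_c$ and $X_{H_{g_0}}$ is nowhere zero on the regular level $S_c$, so $\ker(\omega|_{S_c})=\mathbb{R}\,X_{H_{g_0}}$ pointwise; hence for a primitive $\lambda$ the $3$-form $\lambda\wedge d\lambda=\lambda\wedge(\omega|_{S_c})$ is nowhere vanishing on $S_c$ if and only if $\lambda(X_{H_{g_0}})$ is nowhere zero. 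The whole problem thus reduces to finding a primitive $\lambda$ of $\omega|_{S_c}$ with $\lambda(X_{H_{g_0}})$ never vanishing.

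The main input is the $\mathbf{SO}(3)$-symmetry of the round two-vortex problem (Example~\ref{example_SO3}): for identical vortices the centre of vorticity $z_1+z_2\in\mathbb{R}^3$ is conserved and the motion is a rigid rotation of the pair about its axis at an angular speed depending only on $\theta_0$, so every characteristic of $S_c$ is a closed orbit of one and the same period $T=T(\theta_0)$, and these orbits are the fibres of a \emph{free} $S^1$-action on $S_c$; the quotient map $G\colon S_c\to S_c/S^1$ is the circle bundle $UT\mathbb{S}^2\to\mathbb{S}^2$, of Euler number $\chi(\mathbb{S}^2)=2$. Because $X_{H_{g_0}}$ spans the kernel of $\omega|_{S_c}$, this form descends along $G$ to a $2$-form $\Omega$ on $\mathbb{S}^2$ with $\omega|_{S_c}=G^{*}\Omega$, and $\Omega$ is nondegenerate for rank reasons, i.e. an area form (this is just symplectic reduction by the characteristic circle action). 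Now fix a primitive $\lambda$ on $S_c$ and a characteristic $\gamma$, and put $\bar\psi:=\tfrac1T\oint_\gamma\lambda$. By $\mathbf{SO}(3)$-equivariance $\bar\psi$ is independent of $\gamma$, and since $H^1(S_c;\mathbb{R})=0$ it is independent of the choice of primitive. Evaluating $\int_{\mathbb{S}^2}\Omega$ by Stokes on the complement of a small disk — over which a section of $G$ fails to extend by exactly twice the fibre — gives $\oint_\gamma\lambda=\pm\tfrac12\int_{\mathbb{S}^2}\Omega$, which is nonzero because the total area of an area form on a closed surface is nonzero. Hence $\bar\psi\neq0$.

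To conclude, $\bar\psi-\lambda(X_{H_{g_0}})$ has zero mean along every $S^1$-orbit, so $X_{H_{g_0}}f=\bar\psi-\lambda(X_{H_{g_0}})$ admits a smooth solution $f$ on $S_c$ (integrate along the orbits), and $\lambda+df$ is then a primitive of $\omega|_{S_c}$ with $(\lambda+df)(X_{H_{g_0}})\equiv\bar\psi\neq0$ — a contact form, so $S_c$ is of contact type. The more concrete alternative sketched before Figure~\ref{Fig_contact_type} replaces this cohomological step by transporting one explicitly chosen tangent vector over $S_c$ by $\mathbf{SO}(3)$; the content is the same. I expect the non-degeneracy step to be the real obstacle: the nonvanishing of $\lambda(X_{H_{g_0}})$, equivalently of the reduced area $\int_{\mathbb{S}^2}\Omega$ paired against the nonzero Euler class, is exactly what fails at $\theta_0=\pi$, where the antipodal pair is a fixed point, $z_1+z_2$ degenerates and $S_{c_0}$ is no longer a regular hypersurface — which is what fixes the threshold at $c_0=-\tfrac1{2\pi}\log 2$.
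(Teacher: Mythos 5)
Your proof is correct, but it follows a genuinely different route from the paper's. The paper argues entirely through Proposition~\ref{Proposition: Liouville Vector Field}: it fixes a reference configuration on a prime meridian for each nearby energy level, assigns to it the explicit tangent vector dragging both vortices toward the north pole (which increases $H$, hence is transversal), propagates this vector over the whole of $U_{c,\epsilon}$ by the diagonal $\mathbf{SO}(3)$-action, and checks $\mathcal{L}_{\mathbf{v}}\Omega=\Omega$ via the Cartan formula and the fact that $\mathbf{SO}(3)$ acts symplectically. You instead use the equivalent ``restricted contact type'' criterion: identify $S_c\cong UT\mathbb{S}^2\cong\mathbb{R}\mathrm{P}^3$, use $H^1=H^2=0$ to get a primitive $\lambda$ of $\omega|_{S_c}$, reduce by the free circle action generated by the (periodic, constant-period) characteristic flow, show the action $\oint_\gamma\lambda=\pm\tfrac12\int_{\mathbb{S}^2}\Omega\neq0$ via the Euler number $\pm 2$ of the quotient bundle, and then correct $\lambda$ by an exact form through averaging along orbits. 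Both arguments lean on the $\mathbf{SO}(3)$-symmetry of $g_0$; yours buys a conceptual explanation of the threshold $c_0=-\tfrac{1}{2\pi}\log 2$ (the antipodal fixed point is exactly where the reduction degenerates) and a template that works whenever the characteristic flow is periodic with nonvanishing action, while the paper's buys an explicit Liouville field on a full neighbourhood, which is what its Proposition~\ref{Proposition_contact_deformed_sphere} then pushes forward under the Moser symplectomorphism. One small repair: the independence of $\bar\psi=\tfrac1T\oint_\gamma\lambda$ from the fibre $\gamma$ is not really a consequence of $\mathbf{SO}(3)$-equivariance (a generic primitive $\lambda$ is not invariant); the clean justification is that any two fibres cobound a cylinder $C$ foliated by fibres, on which $\omega|_{S_c}$ vanishes because the fibre direction spans $\ker(\omega|_{TS_c})$, so $\oint_{\gamma_1}\lambda-\oint_{\gamma_0}\lambda=\int_C\omega=0$. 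With that fixed, the argument is complete.
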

\begin{proof}
See appendix \ref{App_Chapter_Contact_Round_Sphere}.  
\end{proof}

Next we prove that the contact structure is stable if the $g_{0}$ has undergone a small change of metric. We only need to prove the case where the volume is preserved before and after the change of metric, thanks to the corollary \ref{Corollary_homothetic_change_of_metric}. Now due to the Dacorogna-Moser theorem, there exists a volume-preserving diffeomorphism $\phi: (\mathbb{S}^2, \omega_{g_0}) \rightarrow (\mathbb{S}^2,\omega_g)$. Moreover $dim(\mathbb{S}^2) = 2$, $\phi$ is a symplectomorphism. It follows that $\phi$ induces the symplectomorphism 
\begin{align}
\psi: \mathbb{S}_{g_0}^2\times \mathbb{S}_{g_0}^2 &\rightarrow \mathbb{S}_{g}^2\times \mathbb{S}_{g}^2 \notag\\
 (z_1,z_2) & \rightarrow (\phi(z_1), \phi(z_2))
\end{align}
Moreover, as $\rho \xrightarrow{\mathcal{C}_1} 0$, $\phi\xrightarrow{\mathcal{C}_1} Id$.

\begin{Pro}
\label{Proposition_contact_deformed_sphere}
Let $g_0$ be the standard round metric and $g$ be a Riemannian metric $g$ conformal to $g_0$, i.e. $g= e^{2\rho}g_0$ with $V_g(\mathbb{S}^2) = V_{g_0}(\mathbb{S}^2)$, and denote $S_c= H^{-1}_{g}(c)$. Then for $\norm{\rho}_{\mathcal{C}^1}$ small, if 
\begin{align}
 \forall z\in \mathbb{S}^2,\quad    (\phi(z), \phi(-z)) \notin S_c  
\end{align}
Then $S_c$ is of contact type.
\end{Pro}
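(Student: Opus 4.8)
The plan is to transport the problem to the round sphere via the symplectomorphism $\psi$ and then run a perturbation argument on the Liouville vector field produced in Lemma~\ref{Lem_contact_unit_sphere}. Since $\psi\colon\bigl((\mathbb{S}^2)^2,\,\omega_{g_0}\oplus\omega_{g_0}\bigr)\to\bigl((\mathbb{S}^2_g)^2,\,\omega_g\oplus\omega_g\bigr)$ is a symplectomorphism and ``being of contact type'' is a symplectomorphism invariant, it suffices to prove that $\widetilde S_c:=\psi^{-1}(S_c)=\widetilde H^{-1}(c)$ is of contact type, where $\widetilde H:=H_g\circ\psi$. By the conformal change formula \eqref{Formula_Hamiltonian_under_a_Conformal_Change_of_Metric} with $\Gamma_1=\Gamma_2=1$ and $V_g(\mathbb{S}^2)=V_{g_0}(\mathbb{S}^2)=4\pi$,
\begin{align*}
H_g(z_1,z_2)=H_{g_0}(z_1,z_2)+\frac{1}{2\pi}\sum_{i=1}^{2}\bigl(\rho(z_i)-\Delta_{g_0}^{-1}e^{2\rho}(z_i)\bigr),
\end{align*}
and $\Delta_{g_0}^{-1}e^{2\rho}=\Delta_{g_0}^{-1}\bigl(e^{2\rho}-\overline{e^{2\rho}}\bigr)\to 0$ in $\mathcal{C}^1$ as $\rho\to0$ in $\mathcal{C}^1$ by elliptic regularity. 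Together with $\phi\xrightarrow{\mathcal{C}^1}\mathrm{Id}$ this yields $\widetilde H\to H_{g_0}$ in $\mathcal{C}^1$ on every compact subset of $(\mathbb{S}^2)^2\setminus\mathcal{N}^2$; in particular $\widetilde H\to+\infty$ at the diagonal $\mathcal{N}^2$ (the collision singularity), so $\widetilde S_c$ is compact and stays a fixed distance from $\mathcal{N}^2$, and $\widetilde S_c$ converges to $H_{g_0}^{-1}(c)$ in the Hausdorff sense as $\norm{\rho}_{\mathcal{C}^1}\to0$.

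Next I would extract from appendix~\ref{App_Chapter_Contact_Round_Sphere} slightly more than Lemma~\ref{Lem_contact_unit_sphere} asserts: the $\mathbf{SO}(3)$-equivariant ``north-pointing'' recipe produces a single smooth vector field $X_0$ on $\mathcal{R}:=(\mathbb{S}^2)^2\setminus(\mathcal{N}^2\cup\mathcal{A})$, where $\mathcal{A}:=\{(z,-z):z\in\mathbb{S}^2\}$ is the antidiagonal, such that $L_{X_0}(\omega_{g_0}\oplus\omega_{g_0})=\omega_{g_0}\oplus\omega_{g_0}$ and $dH_{g_0}(X_0)>0$ hold throughout $\mathcal{R}$; indeed at every configuration whose two vortices are neither equal nor antipodal the $\mathbf{SO}(3)$-stabiliser is trivial, so the recipe is unambiguous and smooth there, and the contact computation of the appendix is purely local. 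Over $\mathcal{A}$ the recipe is genuinely ambiguous (the stabiliser jumping to $\mathbf{SO}(2)$), so $X_0$ does not extend there; and since $\mathcal{A}=H_{g_0}^{-1}(-\tfrac1{2\pi}\log 2)$ is the Morse--Bott minimum of $H_{g_0}$, a level $\widetilde S_c$ with $c$ near that value could a priori run into $\mathcal{A}$ — which is exactly what the hypothesis forbids.

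Indeed, the assumption $(\phi(z),\phi(-z))\notin S_c$ for all $z\in\mathbb{S}^2$ reads precisely $\psi(\mathcal{A})\cap S_c=\emptyset$, i.e.\ $\widetilde S_c\cap\mathcal{A}=\emptyset$; by compactness of both sets there is $\delta_0>0$ with $\mathrm{dist}(\widetilde S_c,\mathcal{A})\ge\delta_0$, and likewise $\mathrm{dist}(\widetilde S_c,\mathcal{N}^2)\ge\delta_1>0$. On the compact set $K:=\{H_{g_0}\le c+1\}\cap\{\mathrm{dist}(\cdot,\mathcal{A})\ge\delta_0/2\}\cap\{\mathrm{dist}(\cdot,\mathcal{N}^2)\ge\delta_1/2\}\subset\mathcal{R}$ one has $dH_{g_0}(X_0)\ge\mu>0$ and $\sup_K\norm{X_0}<\infty$; hence for $\norm{\rho}_{\mathcal{C}^1}$ small enough (how small depends on $c$ through $\delta_0,\mu$) we get both $\widetilde S_c\subset K$ and $\norm{d\widetilde H-dH_{g_0}}_{\mathcal{C}^0(K)}\cdot\sup_K\norm{X_0}<\mu/2$, so that $d\widetilde H(X_0)\ge\mu/2>0$ on $\widetilde S_c$. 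Thus $X_0$, restricted to a neighbourhood of $\widetilde S_c$ inside $\mathcal{R}$, is a Liouville vector field transversal to $\widetilde S_c$, and Proposition~\ref{Proposition: Liouville Vector Field} gives that $\widetilde S_c$ is of contact type; pushing $X_0$ forward by $\psi$ establishes the same for $S_c=\psi(\widetilde S_c)$.

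The step I expect to be the main obstacle is the quantitative behaviour near the antidiagonal. One has to (a) read off from appendix~\ref{App_Chapter_Contact_Round_Sphere} that $X_0$ really is transversal to \emph{every} regular level of $H_{g_0}$ on $\mathcal{R}$, not merely to $H_{g_0}^{-1}(c)$, and (b) propagate the $\mathcal{C}^1$-closeness $\widetilde H\to H_{g_0}$ onto the collar $K$, whose very shape (through $\delta_0$) is pinned away from $\mathcal{A}$ only by the hypothesis. The blow-up of $H_{g_0}$ along the diagonal is harmless — it merely traps $\widetilde S_c$ inside $K$ — whereas the Morse--Bott degeneracy along $\mathcal{A}$, concealed behind the harmless-looking condition $(\phi(z),\phi(-z))\notin S_c$, carries the real weight of the statement.
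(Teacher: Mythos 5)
Your proposal is correct and follows essentially the same route as the paper: transport the problem to $\mathbb{S}^2_{g_0}$ via the Dacorogna--Moser symplectomorphism $\psi$, use the $\mathbf{SO}(3)$-equivariant Liouville vector field of Lemma~\ref{Lem_contact_unit_sphere} (which the hypothesis keeps away from the antidiagonal, i.e.\ from the minimum level $-\tfrac{1}{2\pi}\log 2$ where the construction degenerates), invoke $\mathcal{C}^1$-stability of transversality for small $\rho$, and push the field forward by $\psi$. Your version merely makes quantitative what the paper states as ``transversality is an open property,'' which is a faithful elaboration rather than a different argument.
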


\paragraph{\textbf{Proof of theorem \ref{Theorem_Main_Contact}}:}
Under the assumption, there exists $\tilde{c}> -\frac{1}{2\pi}log 2, \epsilon>0$ s.t. that $\tilde{c}-\epsilon>  -\frac{1}{2\pi}log 2$ and $\psi^{-1}(S_c)\subset U_{\tilde{c},\epsilon}(H_{g_{0}})$. Now according to lemma \ref{Lem_contact_unit_sphere}, there is a transversal Liouville vector field $\mathbf{v}$ in $U_{\tilde{c},\epsilon}(H_{g_{0}})$. On one hand, the transversality is an open property hence stays valid for $\rho$ small enough; on the other hand, $\psi$ is a symplectomorphism thus its push-forward sends Liouville vector field to Liouville vector field. Together one conclude that $\mathbf{v}_{\psi} = \psi_{*}\mathbf{v}$ is also a transversal Liouville vector field in the neighbourhood $\psi(U_{\tilde{c},\epsilon}(H_{g_{0}}))$ of $S_c$, which implies that $S_c$ is also of contact type.  \QED

\paragraph{}As an immediate application of the theorem, we see that 
\begin{Col}
Suppose that $I_g$ is as in theorem \ref{Theorem_Main_Periodic_Orbits}, and $\psi$ as in theorem \ref{Theorem_Main_Contact}. For $\norm{\rho}_{\mathcal{C}^1}$ small, if   
\begin{align}
 h_g =  \{h \hspace{0.25em}|\hspace{0.25em} h = H_g(\phi(z), \phi(-z)), z\in \mathbb{S}^2 \} 
\end{align}
Then for each $c\in I_g \setminus h_g$, $S_c= H_g^{-1}(c)$ possesses at least a periodic orbit on it. 
\end{Col}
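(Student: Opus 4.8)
The plan is to simply combine Theorem \ref{Theorem_Main_Periodic_Orbits} with Theorem \ref{Theorem_Main_Contact}, using Proposition \ref{Proposition: Liouville Vector Field} to upgrade the ``sequence of orbits converging to $c$'' conclusion into an orbit lying exactly on $S_c$. The key observation is that Theorem \ref{Theorem_Main_Contact} (via Proposition \ref{Proposition_contact_deformed_sphere}) tells us which energy levels are of contact type: precisely those $c$ for which the obstruction configuration $(\phi(z),\phi(-z))$ never lands on $S_c$, i.e.\ $c\notin h_g$. So the strategy is: fix $c\in I_g\setminus h_g$; produce a sequence of periodic orbits with energies $c_j\to c$ by Theorem \ref{Theorem_Main_Periodic_Orbits}; and then show that being on a contact-type hypersurface forces the limiting behaviour at $c$ itself to again be a genuine periodic orbit.

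The key steps, in order, would be the following. First, observe that since $c\in I_g$, Theorem \ref{Theorem_Main_Periodic_Orbits} supplies a sequence $\{\mathbf{z}_j\}$ of periodic orbits with $H_g(\mathbf{z}_j)\to c$; since these orbits are produced on hypersurfaces $S_{c_j}\subset U_{c,\epsilon}(H_g)$ with $c_j\to c$, and $U_{c,\epsilon}(H_g)$ has compact closure away from the collision set (using positivity of the $\Gamma_i$ so condition \eqref{Condition_No_Collision} holds and Lemma \ref{Lemma_Accumulation_Cluster}-type control prevents escape to $\mathcal{N}^n$), we may assume the orbits stay in a fixed compact region. Second, since $c\notin h_g$, Proposition \ref{Proposition_contact_deformed_sphere} (i.e.\ Theorem \ref{Theorem_Main_Contact}) gives that $S_c=H_g^{-1}(c)$ is of contact type, and moreover — by the openness of the transversality condition in Proposition \ref{Proposition: Liouville Vector Field} — there is a whole neighbourhood $U_{c,\epsilon'}(H_g)$ on which a single Liouville vector field $X$ with $L_X\omega=\omega$ is transverse to \emph{every} level $S_{c'}$, $|c'-c|<\epsilon'$. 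Thus $S_c$ is a stable hypersurface in the sense of Hofer--Zehnder. Third, invoke the standard fact that on a stable hypersurface the existence of closed characteristics on a sequence of nearby levels $S_{c_j}$, $c_j\to c$, together with uniform $C^0$-bounds and the uniform lower bound on the period coming from the contact form $\lambda=i_X\omega$ (periods are bounded below by $\int_{\gamma}\lambda>0$ controlled by the action, which is bounded away from $0$ on a compact contact-type piece), yields a closed characteristic on $S_c$ itself in the limit; since the characteristic foliation of $S_c$ does not depend on the defining Hamiltonian, this is precisely a non-constant periodic orbit of the $n$-vortex flow on $S_c$.

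I would expect the main obstacle to be the third step: passing from periodic orbits on $S_{c_j}$ to one on $S_c$ is not automatic and requires a genuine compactness argument. One must rule out that the periods $T_j$ blow up and that the orbits $\gamma_j$ degenerate (shrink to a point or escape). The Liouville/contact structure is exactly what makes this work — on a contact-type hypersurface the action $\int_{\gamma_j}\lambda$ equals the period with respect to the Reeb normalisation, and on the compact piece of $U_{c,\epsilon'}(H_g)$ the contact form $\lambda=i_X\omega$ is uniformly bounded above and below, so bounded action gives bounded period and a positive lower bound on the period, while Arzelà--Ascoli then extracts a convergent subsequence whose limit is a closed characteristic on $S_c$; one checks it is non-constant because its action is bounded below. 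An alternative, perhaps cleaner route that sidesteps the delicate limiting argument: since $S_c$ is of contact type for $c\in I_g\setminus h_g$, it is stable, and one can directly re-run the argument of Theorem \ref{Thm_Main_tool} / Lemma \ref{Theorem_Hofer_Viterbo} but, using the stability, push the perturbation $\delta^*$ to be exactly $0$ — i.e.\ the contact structure lets one choose the cutoff function $\phi$ so that the Hofer--Viterbo orbit is forced onto $S_c$ rather than merely onto some nearby $S_{c+\delta^*}$. I would present the proof via whichever of these the earlier sections make most readily available; given the phrasing ``could be combined with'' after Theorem \ref{Theorem_Main_Contact}, the intended argument is almost certainly the short one: contact type $\Rightarrow$ stable $\Rightarrow$ Weinstein-type conclusion transfers from the nearby levels to $S_c$.
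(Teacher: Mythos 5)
Your proposal is correct and follows essentially the same route as the paper: the paper's own proof is a one-line citation of Theorems \ref{Theorem_Main_Periodic_Orbits} and \ref{Theorem_Main_Contact} together with the remark at the start of the subsection that contact type implies stability, so the almost-existence of closed orbits on nearby levels $S_{c_j}$ transfers to an orbit on $S_c$ itself. Your identification of $c\notin h_g$ as exactly the hypothesis of Proposition \ref{Proposition_contact_deformed_sphere}, and your final paragraph's ``short'' argument, is precisely the intended proof; the extra compactness details you supply are more than the paper itself records.
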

\begin{proof}
This results from theorem \ref{Theorem_Main_Periodic_Orbits} and theorem \ref{Theorem_Main_Contact} and the discussion in the beginning of this sub-section.  
\end{proof}
\subsection{Exclusion for Perverse Choreography}

\label{Chapter_Symplectic_Subsection_Perverse_Choreography}
The method discussed in sub-section \ref{Chapter_Symplectic_Subsection_Periodic_Orbits} can be applied to find periodic orbits for the identical $n$-vortex problem on $\mathbb{S}^2_g$ with $g=g(\rho), \rho \in \mathcal{D}_{M}$. However we don't know the precise configuration of such an orbit. Let $\mathbf{z}= (z_1, z_2,...,z_n)$ with $z_i=(x_i,y_i)$ be the Clebsch variables for the motion of $n$ vortices moving on $\mathbb{S}^2_g$. Then as the case in the plane \cite{MarsdenCoadjoint}, the gauge group for the Poisson map 
\begin{align}
\mathbf{z}=(z_1,z_2,...,z_n) \rightarrow \sum_{i=1}^{n}\Gamma_i \delta_{(x_i,y_i)} \omega_g 
\end{align}
is trivial unless some $\Gamma_i$'s are identical. Clearly this holds regardless of the Riemannian metric tensor assigned to $\mathbb{S}^2$. In particular, if all the $\Gamma_i$'s are all identical, one might hope that there exist choreographies, i.e., periodic orbits with the property that 
\begin{align}
z_i(t-\frac{T}{n}) = z_{i+1}(t), 1\leq i\leq n 
\end{align}
where by convention $z_{0}= z_n$. In other words, if we introduce the transformation 
\begin{align}
    \mathcal{P}: \Sd &\rightarrow \Sd, \notag\\
    \mathbf{z}=(z_1,...z_n) &\rightarrow \mathcal{P}\mathbf{z}=(z_n,z_1,...z_{n-1})
\end{align}
Then $\mathbf{z}$ being a choreography is equivalent to $\mathcal{P}\mathbf{z}(t)=\mathbf{z}(t+\frac{T}{n})$. It is in still open whether choreographies exist for $n$-vortex problem on $\mathbb{S}^2_g$ with general $g$, although on $\mathbb{S}^2_{g_0}$ they have been found \cite{tronin2006absolute} \cite{carlos2018choreography}.

On the other hand, following the discovery of such symmetric solution in the $n$-body problem \cite{chenciner2000remarkable}, Chenciner has asked the question about whether there exists perverse choreography, i.e., choreography with non-identical mass (or vorticities), see \cite{ChencinerPerverse}. One can also consider the possibility for the existence of a perverse choreography in the $n$-vortex problem. In \cite{Celli2003On}, Martin Celli has shown that the $n$-vortex problem on the plane does not permit perverse choreography. We would like to study the problem on the Riemannian manifold $\mathbb{S}^2_g$. First we prove the following proposition using an idea from \cite{ChencinerPerverse}. 
\begin{Pro}
\label{Proposition_Fixed_Points_Adjustified_Vorticity}
Consider the $n$-vortex problem on $\mathbb{S}^2_g$. Suppose $\mathbf{z}(t)$ is a choreography with period $T>0$ and with vorticity vector $\mathbf{\Gamma}= (\Gamma_1,\Gamma_2,...,\Gamma_n)$ not all identical. Then any configuration on this choreography will be a stationary configuration for the $n$-vortex problem on $\mathbb{S}^2_g$ with vorticity vector $\mathbf{\tilde{\Gamma}}= (\Gamma_1-\bar{\Gamma},\Gamma_2-\bar{\Gamma},...,\Gamma_n-\bar{\Gamma})$.
\end{Pro}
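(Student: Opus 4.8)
The plan is Chenciner's averaging idea: along the choreography the $n$ vortices occupy, at the equally spaced instants $0,\tfrac{T}{n},\tfrac{2T}{n},\dots$, the \emph{same} unlabelled configuration, but with their labels — hence their vorticities — cyclically permuted; comparing the velocity of the $\ell$-th vortex under each of these $n$ labellings and averaging over the cycle will produce exactly the stationarity equations for $\mathbf{\tilde{\Gamma}}$. First I would record the algebraic shape of the equations of motion. Writing the Hamiltonian vector field of $H_g(\z)=\sum_{i<j}\Gamma_i\Gamma_j G_g(z_i,z_j)+\sum_i\Gamma_i^2 R_g(z_i)$ componentwise against $\Omega_g(\mathbf{\Gamma})=\bigoplus_i\Gamma_i\omega_g$, the equation for the $\ell$-th vortex is $\Gamma_\ell\,\iota_{\dot z_\ell}\omega_g=d_{z_\ell}H_g$; using the symmetry $G_g(z,w)=G_g(w,z)$ one finds $d_{z_\ell}H_g=\Gamma_\ell\big(\sum_{j\neq\ell}\Gamma_j\,d_1G_g(z_\ell,z_j)+\Gamma_\ell\,dR_g(z_\ell)\big)$, so after cancelling the factor $\Gamma_\ell$,
\begin{align*}
\dot z_\ell \;=\; X_\ell(\z)\;:=\;\sum_{j\neq\ell}\Gamma_j\,\mathcal{A}(z_\ell,z_j)\;+\;\Gamma_\ell\,\mathcal{B}(z_\ell),
\end{align*}
where $\mathcal{A}(\cdot,w)$ denotes the symplectic gradient (with respect to $\omega_g$) of $G_g(\cdot,w)$ and $\mathcal{B}$ that of $R_g$. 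The decisive point is that $\mathcal{A}$ and $\mathcal{B}$ depend only on the metric $g$, \emph{not} on $\mathbf{\Gamma}$. The same computation shows that a configuration $\z$ is a critical point of the vortex Hamiltonian carrying a vorticity vector $\mathbf{\Upsilon}$ precisely when $\sum_{j\neq\ell}\Upsilon_j\mathcal{A}(z_\ell,z_j)+\Upsilon_\ell\mathcal{B}(z_\ell)=0$ for every $\ell$; this is the right reading of ``stationary configuration'' even when $\Omega_g(\mathbf{\Upsilon})$ is degenerate, which can happen for $\mathbf{\tilde{\Gamma}}$. Along the choreography $\z(t)\in\Sd$, so $z_\ell(t)\neq z_j(t)$ for $j\neq\ell$ and $\mathcal{A}$ is only ever evaluated off the diagonal.

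Next I would feed in the choreography relation. It reads $\z(t+\tfrac{T}{n})=\mathcal{P}\z(t)$ with $(\mathcal{P}\z)_i=z_{i-1}$ (indices mod $n$), hence $\z(t+\tfrac{kT}{n})=\mathcal{P}^k\z(t)$ and $(\mathcal{P}^k\z)_i=z_{i-k}$ for all $k$. Differentiating in $t$ gives $\dot z_i(t+\tfrac{kT}{n})=\dot z_{i-k}(t)$, that is $X_i(\mathcal{P}^k\z(t))=X_{i-k}(\z(t))$. Expanding both sides with the formula above, re-indexing the interaction sum by the cyclic shift $j\mapsto j+k$, cancelling the common factor, and setting $\ell=i-k$, I obtain that for every $t$, every $\ell$, and every $k\in\{0,1,\dots,n-1\}$,
\begin{align*}
\sum_{j\neq\ell}\big(\Gamma_{j+k}-\Gamma_j\big)\,\mathcal{A}\big(z_\ell(t),z_j(t)\big)+\big(\Gamma_{\ell+k}-\Gamma_\ell\big)\,\mathcal{B}\big(z_\ell(t)\big)=0;
\end{align*}
in other words $\z(t)$ is a stationary configuration for each ``shifted-difference'' vorticity $\big(\Gamma_{j+k}-\Gamma_j\big)_{j}$.

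Finally I would average over the cycle. Summing the last display over $k=0,1,\dots,n-1$ and using that $k\mapsto j+k$ runs through a complete residue system mod $n$, so $\sum_{k=0}^{n-1}\Gamma_{j+k}=\sum_{m=1}^{n}\Gamma_m=n\bar{\Gamma}$ and therefore $\sum_{k=0}^{n-1}(\Gamma_{j+k}-\Gamma_j)=-n(\Gamma_j-\bar{\Gamma})$, one gets
\begin{align*}
\sum_{j\neq\ell}\big(\Gamma_j-\bar{\Gamma}\big)\,\mathcal{A}\big(z_\ell(t),z_j(t)\big)+\big(\Gamma_\ell-\bar{\Gamma}\big)\,\mathcal{B}\big(z_\ell(t)\big)=0
\end{align*}
for every $\ell$ and every $t$, which by the criterion of the first paragraph says exactly that $\z(t)$ is a stationary configuration of the $n$-vortex problem on $\mathbb{S}^2_g$ with vorticity vector $\mathbf{\tilde{\Gamma}}=(\Gamma_1-\bar{\Gamma},\dots,\Gamma_n-\bar{\Gamma})$. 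As $t$ is arbitrary, this holds at every configuration on the choreography. (The hypothesis that the $\Gamma_i$ are not all equal plays no role in the argument; it only makes the conclusion non-vacuous, $\mathbf{\tilde{\Gamma}}\neq\mathbf{0}$.)

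The one genuinely delicate point is the very first step: one must work from the intrinsic expression $H_g=\sum\Gamma_i\Gamma_j G_g+\sum\Gamma_i^2 R_g$, for which $\mathcal{A}$ and $\mathcal{B}$ are manifestly $\mathbf{\Gamma}$-independent because $G_g,R_g$ are purely metric objects. The embedded form of the equations used in the proof of Lemma \ref{Lemma_Accumulation_Cluster} hides, inside its ``self-interaction'' term, a contribution proportional to $\bar{\Gamma}$ (coming from the last term of \eqref{Formula_Hamiltonian_under_a_Conformal_Change_of_Metric}), and with that form the $\mathbf{\Gamma}$-linearity needed for the averaging step would be obscured. Everything after that first step is linear algebra over the index set $\mathbb{Z}/n\mathbb{Z}$ — the cyclic re-indexing and the summation over $k$.
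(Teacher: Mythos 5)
Your proof is correct and follows essentially the same route as the paper: both exploit the cyclic relation $\mathbf{z}(t+kT/n)=\mathcal{P}^k\mathbf{z}(t)$, differentiate it, and average over the $n$ shifts to obtain the stationarity equations for $\tilde{\mathbf{\Gamma}}$. The only difference is bookkeeping --- the paper routes the average through the auxiliary Hamiltonian $\bar{H}_g$ (all vorticities set to $\bar{\Gamma}$) and the identity $\frac{1}{\Gamma_\ell-\bar{\Gamma}}d_{z_\ell}\tilde{H}_g=\frac{1}{\Gamma_\ell}d_{z_\ell}H_g-\frac{1}{\bar{\Gamma}}d_{z_\ell}\bar{H}_g$, whereas you sum the shifted-difference relations directly on the $\mathbf{\Gamma}$-linear vector field, which incidentally avoids any division by $\bar{\Gamma}$.
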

\begin{proof}
Let's consider the three Hamiltonians 
\begin{align}
H_g(\mathbf{z}) &= \sum_{1\leq i <j \leq n } \Gamma_i\Gamma_j G_g(z_i,z_j) + \sum_{1\leq i \leq n} \Gamma_i^2 R_g(z_i) \\
\bar{H}_g(\mathbf{z}) &= \sum_{1\leq i <j \leq n } \bar{\Gamma}^2 G_g(z_i,z_j) + \sum_{1\leq i \leq n} \bar{\Gamma}^2 R_g(z_i) \\
\tilde{H}_g(\mathbf{z}) &= \sum_{1\leq i <j \leq n } (\Gamma_i-\bar{\Gamma})(\Gamma_j-\bar{\Gamma})G_g(z_i,z_j) + \sum_{1\leq i \leq n} (\Gamma_i-\bar{\Gamma})^2 R_g(z_i)
\end{align}

We only need to verify the claim for those vorticity s.t. $\Gamma_i \neq \bar{\Gamma}$. To this end we will directly look at the intrinsic definition of Hamiltonian vector field. Let $X_{H_g}=(X^1_{H_g},X^2_{H_g},...,X^n_{H_g})$. Suppose that $\Gamma_1\neq \bar{\Gamma}$ without loss of generality. By definition of Hamiltonian system,
\begin{align*}
\forall s\in \mathbb{R},1\leq k\leq n, \quad     i_{X_{H_g}^k} \omega_k (\mathbf{z}(s))= d_{z_k} H(\mathbf{z}(s))
\end{align*}
As before let $\omega_i= \Gamma_i \omega_g$ for $1\leq i\leq n$. Since $\mathbf{z}$ is a choreography, one sees that 
\begin{align}
\Gamma_k i_{X_{H_g}^1}\omega_g (\mathbf{z}(s))&=i_{X_{H_g}^1} \omega_k (\mathbf{z}(s))=i_{X_{H_g}^k} \omega_k (\mathbf{z}(s+\frac{(k-1)T}{n}))\notag\\ 
&= d_{z_k} H(\mathbf{z}(s+\frac{(k-1)T}{n}) 
= d_{z_1} H(\mathcal{P}^{k-1}\mathbf{z}(s))
\end{align}
This implies that 
\begin{align*}
 \sum_{1\leq k\leq n }i_{X_{H_g}^1}\omega_g (\mathbf{z}(s)) = \sum_{1\leq k\leq n }\frac{1}{\Gamma_k}d_{z_1} H(\mathcal{P}^{k-1}\mathbf{z}(s))
\end{align*}
Hence
\begin{align}
i_{X_{H_g}^1}\bar{\Gamma}\omega_g = \frac{\bar{\Gamma}}{n}\sum_{1\leq k\leq n }\frac{1}{\Gamma_k}d_{z_1}H(\mathcal{P}^{k-1}\mathbf{z}(s)) = d_{z_1}\bar{H}(\mathbf{z})
\end{align}
Finally consider the $n$-vortex problem with vorticity $\tilde{\mathbf{\Gamma}}$. The equation is 
\begin{align}
i_{X_{\tilde{H}_g}^1}\tilde{\Gamma}_1\omega_g = d_{z_1}\tilde{H}_g
\end{align}
By assumption $\tilde{\Gamma}_1 \neq 0$, hence 
\begin{align}
i_{X_{\tilde{H}_g}^1}\omega_g = \frac{1}{\tilde{\Gamma}_1}d_{z_1}\tilde{H}_g = \frac{1}{\Gamma_1 -\bar{\Gamma}}d_{z_1}\tilde{H}_g = \frac{1}{\Gamma_1} d_{z_1}H_g- \frac{1}{\bar{\Gamma}}d_{z_1}\bar{H}_g =i_{X_{H_g}^1}\omega_g-i_{X_{H_g}^1}\omega_g = \mathbf{0}
\end{align}
Hence $X_{\tilde{H}_g}^1 = \mathbf{0}$, similar for other $z_i$ with $\Gamma_i \neq \bar{\Gamma}$.
\end{proof}

Now we are ready to exclude the perverse choreography for $\mathbb{S}^2_{g(\rho)}$ with $\rho \in \mathcal{D}_M$, as is shown by the following lemma:
\paragraph{\textbf{Proof of theorem \ref{Theorem_Main_No_Perverse_Choregraphy}}:}

Consider the $n$-vortex problem on $\mathbb{S}^2_g$. Suppose $\mathbf{z}(t)$ is a perverse choreography with period $T>0$ and with vorticity vector $\mathbf{\Gamma}= (\Gamma_1,\Gamma_2,...,\Gamma_n)$. By proposition \ref{Proposition_Fixed_Points_Adjustified_Vorticity} this means that any configuration on this orbit will be a fixed point for the vorticity vector $\tilde{\mathbf{\Gamma}}= (\Gamma_1-\bar{\Gamma},\Gamma_2-\bar{\Gamma},...,\Gamma_n-\bar{\Gamma})$. In other words, by moving along the choreography, we have achieved a continuous family of fixed points for the $n$-vortex problem $\mathbb{S}^2_g$ with vorticity vector $\tilde{\mathbf{\Gamma}}$. However 
\begin{align*}
\sum_{1\leq i<j\leq n} ( \Gamma_{i}-\bar{\Gamma})(\Gamma_{j}-\bar{\Gamma}) &= \frac{1}{2} \big( (\sum_{1\leq p \leq n}(\Gamma_{i}-\bar{\Gamma}))^2- \sum_{1\leq i \leq n} (\Gamma_{i}-\bar{\Gamma})^2  \big )\\
&= - \sum_{1\leq i \leq n} (\Gamma_{i}-\bar{\Gamma})^2<0
\end{align*}
which according to theorem \ref{Theorem_Main_Morse} leads to a contradiction for $g=e^{2\rho}g_0$ with $\rho \in \mathcal{D}_{M}$.  \QED


\section{Some Further Remarks}
\label{Chapter_Further_Remarks}

Finally we discuss some further directions, for example the possibility and difficulty to generalise some of the results presented in above sections to other Riemann surfaces and to the case of mixed vorticities.


\subsection{Other Riemann Surfaces}
\label{Chapter_Remarks_General_Riemann_Surface}
\paragraph{}It is plausible that some of the points discussed in this article could be applied to more general Riemann surfaces, as long as one perturbs the metric merely in its conformal class. 
Another possibility is to add boundary, instead of working on closed surfaces. We refer to \cite{lin1941motionI} \cite{gustafsson1979vortex} \cite{crowdy2005motion} \cite{sakajo2009equation} which present the formulation given different boundaries, both for simply connected and multiple connected domains. The boundary could also be considered as a parameter to produce the Morse property generically, thus it is possible to generalise the results of Bartsch et al. \cite{bartsch2017morse} from the planar case to be curved case even without changing the Riemannian metric. With the presence of the boundary, the topology of closed surface might no longer count any importance since such regions are homeomorphic to (either simply or multiply connected) domains in $\mathbb{R}^n$, but one then needs to handle the new singularities in the Hamiltonian function fulfilling the boundary condition, see \cite{bartsch2016periodic,bartsch2017global,gebhard2018periodic,gebhard2019stability} for the study of periodic orbits with general boundary in $\mathbb{R}^2$. On the other hand, it might be interesting to compare the orbits bifurcating from either fixed points or periodic orbits, since there are evidences \cite{boatto2008curvature} that the stability of orbits might be reinforced or weakened by the geometry of the surface.

\subsection{Mixed Vorticity}
\label{Chapter_Remarks_Mixed_Vorticity}

The search of periodic orbit for vorticity vectors with mixed signs will be more complicated, and the method discussed above cannot be applied directly, due to various reasons. 
\paragraph{Unbounded Hamiltonian Function}
When more than two vortices of different signs are considered, the Hamiltonian is neither bounded from above nor from below. As a result we will no longer have the separation condition in theorem \ref{Thm_Main_tool}.

\paragraph{Non-compact Energy Hypersurface} Moreover, on fixed level set, there exists configurations with arbitrarily small distance between multiple vortices of different signs, thus the energy surface is no longer compact. Actually one do not even know whether the flow is global in general, as collision might indeed happen in finite time. Nevertheless, for the 3-vortex problem with vanishing total vorticity, the flow is still global regardless of the particular Riemannian metric:
\begin{Pro}
Consider the 3-vortex problem on $\mathbb{S}^2_g$ for any $g$. If $\Gamma_1+\Gamma_2 +\Gamma_3 = 0$, then the solution never blows up.
\end{Pro}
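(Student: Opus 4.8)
The plan is to use conservation of energy to rule out a finite-time collision, since on the compact manifold $(\mathbb{S}^2_g)^3$ a solution defined on a maximal interval $[0,T^\ast)$ with $T^\ast<\infty$ must leave every compact subset of the collision-free set $(\mathbb{S}^2_g)^3\setminus\mathcal{N}^3$, i.e.\ $\min_{i<j}d_g(z_i(t),z_j(t))\to 0$ along some sequence $t_k\to T^\ast$; thus ``blows up'' means ``collides in finite time''. Writing $G_g(z,w)=-\tfrac{1}{2\pi}\log d_g(z,w)+\gamma_g(z,w)$, the remainder $\gamma_g$ extends continuously to the diagonal and, together with $R_g$, is bounded on the compact set $\mathbb{S}^2\times\mathbb{S}^2$. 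Hence, because $H_g(\mathbf z(t))$ is constant, the quantity $\Phi(t):=\sum_{i<j}\Gamma_i\Gamma_j\log d_g(z_i(t),z_j(t))$ remains bounded on $[0,T^\ast)$.

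Next I would exploit the sign structure forced by $\Gamma_1+\Gamma_2+\Gamma_3=0$ with all $\Gamma_i\neq 0$: up to relabelling and a global sign change, $\Gamma_1,\Gamma_2>0>\Gamma_3$, so $\Gamma_1\Gamma_2>0$ while $\Gamma_1\Gamma_3,\Gamma_2\Gamma_3<0$; also $\sum_{i<j}\Gamma_i\Gamma_j=-\tfrac12\sum_i\Gamma_i^2\neq 0$ and $\Gamma_1\Gamma_3+\Gamma_2\Gamma_3=-\Gamma_3^2$. Passing to a subsequence along which each $d_g(z_i(t_k),z_j(t_k))$ converges to some $\bar d_{ij}\ge 0$ (not all positive), the triangle inequality leaves only two possibilities: exactly one $\bar d_{ij}$ vanishes, or all three do. If only $\bar d_{12}=0$, then $\Gamma_1\Gamma_2\log d_{12}(t_k)\to-\infty$ with the other two terms of $\Phi$ bounded, so $\Phi(t_k)\to-\infty$; if only $\bar d_{13}=0$ (or $\bar d_{23}=0$), then $\Gamma_1\Gamma_3\log d_{13}(t_k)\to+\infty$ dominates and $\Phi(t_k)\to+\infty$; both contradict boundedness of $\Phi$. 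Likewise, if all three distances go to $0$ but the triangle does not collapse in shape, i.e.\ $\min_{i<j}d_{ij}(t_k)\ge\delta\max_{i<j}d_{ij}(t_k)$ for some fixed $\delta>0$, then $\Phi(t_k)\ge\Gamma_1\Gamma_2\log\delta+\big(\sum_{i<j}\Gamma_i\Gamma_j\big)\log\max_{i<j}d_{ij}(t_k)=\Gamma_1\Gamma_2\log\delta-\tfrac12\big(\sum_i\Gamma_i^2\big)\log\max_{i<j}d_{ij}(t_k)\to+\infty$, again impossible. Therefore the only collision compatible with energy conservation is a \emph{clustered total collision}: all three $d_g\to 0$ with $d_g(z_1,z_2)\ll d_g(z_1,z_3)\asymp d_g(z_2,z_3)$, the two like-signed vortices pinching together on a scale far finer than their common distance to the third.

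To exclude this last configuration one must use the dynamics. The pair $z_1,z_2$ revolves about its geodesic midpoint with angular speed $\sim(\Gamma_1+\Gamma_2)/d_g(z_1,z_2)^2\to\infty$, so on the slow time scale it behaves, after averaging, as a single point vortex of strength $\Gamma_1+\Gamma_2=-\Gamma_3$ located at a guiding centre $z_c$; the reduced pair $(z_c,z_3)$ then has vanishing total vorticity, i.e.\ it is a vortex dipole, and its own energy $-\Gamma_3^2\,G_g(z_c,z_3)+O(1)$ is adiabatically conserved, so $d_g(z_c,z_3)\asymp d_g(z_1,z_3)$ stays bounded away from $0$ — contradicting $d_g(z_1,z_3)\to 0$. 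Equivalently, a McGehee-type blow-up works: rescaling lengths by $\lambda(t)=\big(\sum_{i<j}d_g(z_i,z_j)^2\big)^{1/2}$ and reparametrising time, the curvature- and Robin-mass-induced velocities ($O(1)$) become negligible against the $O(1/\lambda)$ interaction terms, so the blown-up flow converges to the planar three-vortex flow; a total collapse of three planar vortices is self-similar, and the virial identity then forces $\sum_{i<j}\Gamma_i\Gamma_j=0$, which is false here. Either route closes the argument. The routine parts are the energy bound and the elimination of binary and shape-nondegenerate collisions; \textbf{the main obstacle} is making the clustered-total-collision analysis rigorous — controlling the guiding-centre/averaging reduction, or the convergence and asymptotic self-similarity of the blown-up flow, uniformly with respect to an arbitrary smooth metric $g$.
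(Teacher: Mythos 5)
Your energetic reduction is sound and coincides with the first (and only substantive) step of the paper's own proof: since $\Gamma_1+\Gamma_2+\Gamma_3=0$, the non-local term in the conformal-change formula \eqref{Formula_Hamiltonian_under_a_Conformal_Change_of_Metric} drops out, so $H_g-H_{g_0}=-\frac{1}{2\pi}\sum_i\Gamma_i^2\rho(z_i)$ is bounded and conservation of $H_g$ bounds the logarithmic part $\Phi$ along the orbit --- exactly your observation, obtained there from the conformal factor rather than from the Green function expansion. Your case analysis of which collision sequences are compatible with a bounded $\Phi$ is also correct, and you have honestly isolated the one scenario that pure energetics cannot exclude: the clustered total collapse with $d_{12}\sim d_{13}^{\kappa}$, $\kappa=(\Gamma_1+\Gamma_2)^2/(\Gamma_1\Gamma_2)\ge 4$, along which $\Gamma_1\Gamma_2\log d_{12}$ and $-(\Gamma_1+\Gamma_2)^2\log d_{13}$ cancel and $\Phi$ stays bounded as a sequence of configurations.

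That remaining case is, however, a genuine gap in your argument: neither of the two routes you sketch is carried out. The guiding-centre/averaging picture needs an adiabatic-invariance estimate valid over the entire approach to collision and uniform in the arbitrary metric $g$, and the McGehee route silently invokes two nontrivial facts (convergence of the rescaled flow to the planar three-vortex system, and the theorem that planar total collapse forces $\sum_{i<j}\Gamma_i\Gamma_j=0$) without proof. The paper closes exactly this point by citation rather than by analysis: it invokes Theorem 2 of \cite{sakajo1999motion}, which asserts that for vanishing total vorticity the round-sphere Hamiltonian $H_{g_0}$ diverges at every collision, pairwise or triple, and combines this with the bounded difference $H_g-H_{g_0}$. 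So the content you were unable to establish --- divergence of $H_{g_0}$ along the clustered collapse, which is a statement about the orbit and not merely about the sign pattern of the products $\Gamma_i\Gamma_j$ --- is precisely what the paper imports from the literature. To complete your proof you would either quote that result as the paper does, or carry one of your two sketches to the end.
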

\begin{proof}
Since $\mathbb{S}^2$ is closed manifold, the only possibility for blowing up are the collisions. As there is only one conformal equivalent class, any metric $g$ is conformal to the round metric $g_0$. In case that $\Gamma_1 + \Gamma_2 + \Gamma_3=0$, one has that 
\begin{align}
\label{Formula_Hamiltonian_Vortex_Dipole_under_a_Conformal_Change_of_Metric}
H_{g}(\mathbf{z}) = H_{g_0}(\mathbf{z}) -\frac{1}{2\pi}\sum_{i=1}^3\Gamma_i^2\rho(z_i)
\end{align}
The function $H_{g_0}(\mathbf{z}) \rightarrow\infty$ were there any collision, either pairwise or triple \cite[Theorem 2]{sakajo1999motion}. But $\rho$ is a regular function hence it never blows up. As a result, $H_{g_0}(\mathbf{z}) \rightarrow \infty$ implies that $H_{g}(\mathbf{z}) \rightarrow \infty$, which is impossible because the $H_g$ is a conserved quantity.
\end{proof}

One possibility to overcome such difficulty is to restrict ourselves to the case where some symmetric condition on vorticities and also the metirc $g$ should be posed, as an attempt to reduce the degree of freedom.

\subsection{The Conditions \eqref{Condition_No_Collision} and \eqref{Condition_Thin_Vorticity} on Vorticity}
\label{Chapter_Remarks_Conditions_Vorticity}

In this article we have focused on the perturbation of Riemannian metric, instead of changing the vorticity. It might be possible to prove the Morse property under generic perturbation of vorticity vector. However our consideration is that, while condition \eqref{Condition_No_Collision} is an open property and is stable under small perturbation of vorticity, the condition \eqref{Condition_Thin_Vorticity} is indeed not an open property, and even small adjustment of vorticity might break this condition. To be precise, define
\begin{align}
\mathcal{Q}_n = \{\mathbf{\Gamma} \in (\mathbb{R}_{+})^{n}, \mathbf{\Gamma} \text{ possesses a thin vorticity}\}
\end{align}
and let $\mu_{n}$ be the Lebesgues measure on $\mathbb{R}^{n}$, then it turns out that 
\begin{Pro}
$\mathcal{Q}_2 = \mathbb{R}_{+}^2 $ but $\mu_n(\mathcal{Q}_n) = 0 $ for $n\geq 3$.
\end{Pro}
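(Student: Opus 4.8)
The plan is to handle the two assertions separately; each reduces to a short computation once the definition of $\kappa$ is unwound. First I would record the elementary fact that for a single positive real $\alpha$, every representation $\alpha=\beta l$ with $\beta\in\mathbb{R}_{+}$, $l\in\mathbb{N}$ gives $\beta\,h(l)=\beta l=\alpha$, so $\kappa(\alpha)=\alpha$ independently of the chosen representation. Consequently, by Definition \ref{Definition_Thin_Vortex}, $\Gamma_1$ is thin with respect to $(\Gamma_1,\Gamma_2)$ if and only if $\Gamma_1\le\kappa(\Gamma_2)=\Gamma_2$, and symmetrically $\Gamma_2$ is thin if and only if $\Gamma_2\le\Gamma_1$. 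Since $\le$ is a total order on $\mathbb{R}_{+}$, at least one of these two inequalities holds for every $(\Gamma_1,\Gamma_2)\in\mathbb{R}_{+}^2$; hence every such pair possesses a thin vorticity, i.e.\ $\mathcal{Q}_2=\mathbb{R}_{+}^2$.

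For $n\ge 3$ the crucial observation is that $\kappa(\alpha_1,\dots,\alpha_m)$ is \emph{only defined} when the $\alpha_i$ are commensurable. Thus, if $\mathbf{\Gamma}\in\mathcal{Q}_n$ has a thin vorticity $\Gamma_k$, the remaining $n-1$ numbers $\{\Gamma_i\}_{i\ne k}$ must be pairwise commensurable, and therefore
\[
\mathcal{Q}_n\subseteq\bigcup_{k=1}^{n}C_k,\qquad C_k:=\{\mathbf{\Gamma}\in\mathbb{R}_{+}^n:\ \Gamma_i/\Gamma_j\in\mathbb{Q}\ \text{for all}\ i,j\ne k\}.
\]
I would then show each $C_k$ is $\mu_n$-null. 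Since $n-1\ge 2$, fix two distinct indices $i_0,j_0$, both different from $k$; then $C_k\subseteq\{\mathbf{\Gamma}:\Gamma_{i_0}/\Gamma_{j_0}\in\mathbb{Q}\}=\bigcup_{p/q\in\mathbb{Q}_{+}}\{\mathbf{\Gamma}:q\Gamma_{i_0}=p\Gamma_{j_0}\}$, a countable union of affine hyperplanes in $\mathbb{R}^n$, each of Lebesgue measure zero (by Fubini, or directly). Hence $\mu_n(C_k)=0$, and a finite union of null sets is null, so $\mu_n(\mathcal{Q}_n)=0$ (the measurability of $\mathcal{Q}_n$ following from completeness of Lebesgue measure).

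The only genuine subtlety — and hence the step I would be most careful about — is the bookkeeping around $\kappa$: checking that $\kappa$ of a singleton equals the number itself, and that being thin at an index $k$ really forces the complementary vorticities to be commensurable so that $\kappa$ is even defined (this is exactly what separates the $n=2$ case, where a single real is vacuously commensurable, from $n\ge 3$, where commensurability of $\ge 2$ reals is a measure-zero constraint). Beyond this, nothing deeper than the fact that a countable union of hyperplanes is Lebesgue-null is needed.
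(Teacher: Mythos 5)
Your proof is correct and follows essentially the same route as the paper's: the $n=2$ case reduces to the smaller vorticity always being thin, and for $n\ge 3$ the obstruction is that thinness at index $k$ forces commensurability of the remaining $n-1\ge 2$ vorticities, a Lebesgue-null condition. The only cosmetic difference is that you argue directly from the definition of $\kappa$ and cover $\mathcal{Q}_n$ by a countable union of hyperplanes, whereas the paper phrases the same fact through the minimal action $m((\mathbb{S}^2)^{n-1},\omega_1^{*})$ of Lemma \ref{Lem_minimalsphere} and slices with Fubini; your version is, if anything, more explicit about why the relevant set is null and about taking the union over the index $k$.
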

\begin{proof}
$\mathcal{Q}_2 = \mathbb{R}_{+}^2 $ is a direct consequence of lemma \ref{Lem_minimalsphere}. Actually, $\forall \mathbf{\Gamma}=(\Gamma_1, \Gamma_2)$, we can w.l.o.g. suppose that $0<\Gamma_1 < \Gamma_2$ and $z_1$ will be the thin vortex.  Now for $n\geq 3$. we set $\Gamma_1 = a>0$ and study the set 
\begin{align}
\mathcal{Q}(a) = \{(\Gamma_2, \Gamma_3,...,\Gamma_n) \in (\mathbb{R}^{+}_{*})^{n-1}, m(M,a\omega_g) \leq m(M^{n-1},\omega_1^{*}) \}
\end{align}
Clearly $\mu_{n-1}(\mathcal{Q}(a)) = 0$. To finish the theorem one needs only to apply the Fubini's theorem.
\end{proof}
As a result, for $n\geq 3$, the vorticity vectors that fulfil condition \eqref{Condition_No_Collision} is a full measure set, while the vorticity vectors that fulfil condition \eqref{Condition_Thin_Vorticity} is a null set.
\appendix
\section{Linearization for $dH_g$ }
\label{App_Chapter_Linearised_differential}
In this appendix we present details of the linearised operator used in chapter \ref{Chapter_Riemannian_Metric_and_Hamiltonian_Function}.
Let $H_g$ be of the $n$-vortex problem on $\mathbb{S}^2_g$, and $g(\rho) = e^{2\rho}g_0$ for $\rho \in \mathcal{C}^{2}(\mathbb{S}^2, \mathbb{R})$. We define the map 
\begin{align}
f:\Se \times \mathcal{C}^{2}(\mathbb{S}^2, \mathbb{R})
\rightarrow  \mathcal{T}^{*}\Se \notag \\
(\mathbf{z}, \rho) \xrightarrow{f} dH_{g(\rho)}(\mathbf{z}) \tag{F}
\end{align}
Note that $f$ is a map between two Banach manifold ($\mathcal{C}^{2}(\mathbb{S}^2, \mathbb{R})$ is itself a Banach space). We show that $f$ is of class $\mathcal{C}^1$ by calculating its Fr\'echet derivative $df$. To this end, take a coordinate patch $(U,\phi)$ s.t.
$\mathbf{z}\in U\subset \Se $ and 
\begin{align*}
    \phi: U &\rightarrow V \subset \mathbb{R}^{2n}\\
    \phi(\mathbf{z})&=(\x,\y)= (x_1,y_1, x_2,y_2,...x_{n},y_{n})
\end{align*}
is a local coordinate of $\Se$, and consequently 
\begin{align*}
(\x,\y,d\x,d\y)= (x_1,y_1,...,x_{n},y_{n}, dx_1,dy_1,...,dx_{2n},dy_{2n})
\end{align*}
forms a local coordinate on a neighborhood of $f(\mathbf{z}, \rho)\in \mathcal{T}^{*}\Se$.  Consider a smooth curve 
\begin{align*}
\gamma(t)= \big(\alpha(t)=(\alpha_1(t),...\alpha_n(t)), \beta(t) \big): (-1,1) &\rightarrow  U \times    \mathcal{C}^{2}(\mathbb{S}^2, \mathbb{R})\\
\alpha(0) = \z, \alpha'(0)= \mathbf{w}; &\quad \beta(t)= \rho+ t\psi 
\end{align*}
according to formula \eqref{Formula_Hamiltonian_under_a_Conformal_Change_of_Metric} we see that
\begin{align*}
f(\gamma(t))
&= dH_{g(\beta(t))}(\alpha(t))   \notag\\
&=d\big(H_{g(\rho)}(\alpha(t))+\frac{1}{2\pi}\sum_{i=1}^n \Gamma_i^2 t\psi(\alpha_i(t)) -\frac{\sum_{i=1}^{n}
\Gamma_i}{V_{g(\beta(t))}(\mathbb{S}^2)  } \sum_{i=1}^n \Gamma_i \Delta_{g(\rho)}^{-1}e^{2t\psi(\alpha_i(t))}   \big)\\
&=dH_{g(\rho)}(\alpha(t))+\frac{1}{2\pi}\sum_{i=1}^n \Gamma_i^2 dt\psi(\alpha_i(t)) +\frac{\sum_{i=1}^{n}
\Gamma_i}{V_{g(\beta(t))}(\mathbb{S}^2)  } \sum_{i=1}^n \Gamma_i \Delta^{-1}de^{2t\psi(\alpha_i(t))}  
\end{align*}
where $\Delta = (\delta d + d \delta)$ is the Laplace-de Rham operator on $\mathbb{S}^2_{g(\rho)}$, and when acting on scalar functions $h:\mathbb{S}^2\rightarrow \mathbb{R}$ we have $\Delta h = -\Delta_{g(\rho)} h$. Moreover we have used the fact that $\Delta^{-1}d = d\Delta^{-1}$.
Denote $\hat{H}_g= H_g\circ \phi^{-1} $ and $\hat{\psi} = \psi \circ \phi^{-1}$, one calculates in local coordinate that 
\begin{align*}
\frac{df(\gamma(t))}{dt} =& \big((\dot{\x},\dot{\y}), d^2 \hat{H}_{g(\rho)}(\x,\y) (\dot{\x},\dot{\y})^T + \frac{1}{2\pi} \sum_{i=1}^n \Gamma_i^2 ( d\hat{\psi}(x_i,y_i)  +t d^2 \hat{\psi}(x_i,y_i) (\dot{x_i},\dot{y_i})^T  )\notag\\
&+\frac{\sum_{i=1}^{n}\Gamma_i}{V^2_{g(\beta(t))}} \int_{\mathbb{S}^2} e^{t\psi} \psi w_{g(\rho)} d(\sum_{i=1}^n \Gamma_i \Delta^{-1}e^{2t\hat{\psi}(x_i,y_i)}) \notag\\
&+\frac{\sum_{i=1}^{n}\Gamma_i}{V_{g(\beta(t))}(\mathbb{S}^2)} \sum_{i=1}^n \Gamma_i \Delta^{-1}\big( e^{2t\hat{\psi}(x_i,y_i)} (2t  \frac{d \hat{\psi}(x_i,y_i)}{dt}  +  2d\hat{\psi}(x_i,y_i)) \\
&+ 2t e^{2t\hat{\psi}(x_i,y_i)}  d^2\hat{\psi}(x_i,y_i) (\dot{x}_i,\dot{y_i})^T\big) \big)
\end{align*} 
Take $t=0$ we see that 
\begin{align*}
     \frac{df(\gamma(t))}{dt}|_{t=0} &= \big((\dot{\x}(0),\dot{\y}(0)),   d^2 \hat{H}_{g(\rho)}(\x(0),\y(0)) \mathbf{w}^T+\frac{1}{2\pi} \sum_{i=1}^n \Gamma_i^2  d\hat{\psi}(x_i(0),y_i(0)) \notag\\
     &+2\frac{\sum_{i=1}^{n}\Gamma_i}{V_{g(\beta(t))}(\mathbb{S}^2)} \sum_{i=1}^n \Gamma_i \Delta^{-1} d\hat{\psi}(x_i(0),y_i(0)) \big)
\end{align*}
In particular, if $\mathbf{z}$ is a critical point of $H_g(\rho)$, then the Hessian does not depend on the metric and we have 
\begin{align*}
     Df_{(\z,\rho)} (\mathbf{w},\psi)&=  \frac{df(\gamma(t))}{dt}|_{t=0} \notag\\
     &= \big (\mathbf{w}, d^2 H_{g(\rho)}(\z)(\mathbf{w})+\frac{1}{2\pi} \sum_{i=1}^n \Gamma_i^2  d\psi (z_i)  
     +2\frac{\sum_{i=1}^{n}\Gamma_i}{V_{g(\beta(t))}(\mathbb{S}^2)} \sum_{i=1}^n \Gamma_i \Delta^{-1} d\psi(z_i) \big )  \\
     &= \big ( \mathbf{w}, d^2 H_{g(\rho)}(\z)(\mathbf{w})+\frac{1}{2\pi} \sum_{i=1}^n \Gamma_i^2  d\psi (z_i)  
     -2d \big (\frac{\sum_{i=1}^{n}\Gamma_i}{V_{g(\beta(t))}(\mathbb{S}^2)} \sum_{i=1}^n \Gamma_i \Delta_{g(\rho)}^{-1} \psi(z_i) \big ) \big )
\end{align*}
The linearised map above consists of two parts, the first part $d^2 H_{g(\rho)}(\z)(\mathbf{w})$ is the Green function and the rest part is the Robin mass. Since all $\mathbf{z}\in \Se$ are uniformly separated from the collision set, moreover the Robin mass is a regular function, classical regularity estimate for elliptic operator then implies that $Df$ is continuous. 

\section{Contact Structure for Energy Surface on $\mathbb{S}^2_{g_0}$} 
\label{App_Chapter_Contact_Round_Sphere}
In this appendix we verify in details that the vector field suggested in section \ref{Chapter_Symplectic_Subsection_Contact_Structure} indeed is a transversal Liouville vector field in the neighbourhood of any regular hyper-surface $S_c= H^{-1}_{g_0}$. We will drop the index $g_0$ to simplify the notation without introducing any unexpected ambiguity, as we will only work with the round metric in this appendix. Here is a detailed proof of lemma \ref{Lem_contact_unit_sphere}:
\begin{proof}
Let the two poles $N$ and $S$ and the prime meridian $L$ be fixed as in section \ref{Chapter_Symplectic_Subsection_Contact_Structure}. First, note that the only fixed points for the 2-vortex problem are anti-polar configurations, on which the Hamiltonian achieves its minimum $-\frac{1}{2\pi}log 2$. As a result, $c$ is a regular value of $H$ and $U_{c,\epsilon}(H)$ is clearly a neighbourhood of $H^{-1}(c)$. 

For any $\alpha\in(c-\epsilon,c+\epsilon)$, there is a unique point $\mathbf{z}^{\alpha} =(z^{\alpha}_{1},z^{\alpha}_{2}) \in H^{-1}(\alpha)$ s.t. $z^{\alpha}_{1},z^{\alpha}_{2}\in L$ and the $N$ is on the middle point of the big arc connecting $z^{\alpha}_{1},z^{\alpha}_{2}$. In a local coordinate chart it is represented by 
\begin{align}
\mathbf{z}^{\alpha}= (p^{\alpha}_{1},q^{\alpha}_{1},p^{\alpha}_{2},q^{\alpha}_{2})=(\cos \theta_\alpha, 0, \cos\theta_\alpha, \pi)\in S_{\alpha}, \text{ with } \theta_{\alpha} = \arccos \frac{e^{-2\pi \alpha}}{2}
\end{align}
For $\mathbf{z}^{\alpha}$ we associate the vector (see figure \ref{Fig_contact_type} right).
\begin{align*}
\mathbf{v}(\mathbf{z}^{\alpha}) = (p^{\alpha}_{1},0,p^{\alpha}_{2},0) =  (\cos \theta_{\alpha}, 0, \cos\theta_{\alpha}, 0)
\end{align*}
Now for $\mathbf{w}^{\alpha}= (w^\alpha_1,w^\alpha_2)\in H^{-1}(\alpha), \mathbf{w}^{\alpha} \neq \mathbf{z}^{\alpha}$, we see from the Hamiltonian \eqref{Formula_Vortex_Round_Sphere} that $\norm{w^{\alpha}_1-w^{\alpha}_2} = \norm{z^{\alpha}_{1}-z^{\alpha}_{2}}< 2$. As a result, there is a unique element $\mathfrak{g}_{\mathbf{w}^{\alpha}}\in SO(3)$ s.t. $\mathfrak{g}_{\mathbf{w}^{\alpha}}(\mathbf{z}^{\alpha}) = \mathbf{w}^{\alpha}$. Note that here the action is the diagonal action on $\mathbb{S}^2\times \mathbb{S}^2$. We associate to $\mathbf{w}^{\alpha}$ the push-forward vector $\mathbf{v}(\mathbf{w}^{\alpha}) = (\mathfrak{g}_{\mathbf{w}^{\alpha}})_{*}  \mathbf{v}(\mathbf{z}^{\alpha})$. 

We verify the vector field $\mathbf{v}$ thus constructed is indeed a Liouville vector field in $U_{c,\epsilon}$.
First, $\mathbf{SO}(3)$ is isometric, hence
it turns out that for $\mathbf{z}^{\alpha}\in H^{-1}(\alpha)$:
\begin{align}
dH(\mathfrak{g}_{\mathbf{z}^{\alpha}}(\mathbf{z}^{\alpha}))(\mathfrak{g}_{\mathbf{z}^{\alpha}})_{*}  \mathbf{v}(\mathbf{z}^{\alpha})= dH(\mathbf{z}^{\alpha})(\mathbf{v}(\mathbf{z}^{\alpha}))>0
\end{align}
Hence $\mathbf{v}$ is transversal. 

Next we show that $\mathbf{v}$ is homothetic. On $L$ one has  $\mathbf{v}(\mathbf{z}) = \mathbf{v}(\mathbf{p},\mathbf{q})  = (\mathbf{p},\mathbf{0})$, thus  $L_{\mathbf{v}}\Omega = \Omega$ on $L$. Now out of $L$, let $\phi_\mathbf{v}^t$ be the flow of $\mathbf{v}$, then
\begin{align}
\label{Liouville_vector_field} 
\mathfrak{g}_{\mathbf{w}^{\alpha}}^{*}\mathcal{L}_{\mathbf{v}}\Omega_{}(\mathbf{w}^{\alpha}) 
&= \mathfrak{g}_{\mathbf{w}^{\alpha}}^{*}(i_{\mathbf{v}}d\Omega_{}(\mathbf{w}^{\alpha})  + d i_{\mathbf{v}}\Omega(\mathbf{w}^{\alpha})) \tag{Cartan formula}\\
&=\mathfrak{g}_{\mathbf{w}^{\alpha}}^{*} \circ d\circ i_{\mathbf{v}}\Omega (\mathbf{w}^{\alpha})  \notag\\
&= d \circ \mathfrak{g}_{\mathbf{w}^{\alpha}}^{*} \circ i_{(\mathfrak{g}_{\mathbf{w}^{\alpha}})_{*}  \mathbf{v}(\mathbf{z}^{\alpha})}\Omega (\mathbf{w}^{\alpha}) \notag \\
&= d \circ   i_{\mathbf{z}^{\alpha}} \mathfrak{g}_{\mathbf{w}^{\alpha}}^{*}\Omega(\mathbf{w}^{\alpha}) \tag{$\mathfrak{g}_{\mathbf{w}^{\alpha}}$ is symplectic on $\mathbb{S}^2\times \mathbb{S}^2$}\\
&= d\circ i_{\mathbf{v}(\mathbf{z}^{\alpha})} \Omega(\mathbf{z}^{\alpha}) \notag\\
&=  \mathcal{L}_{\mathbf{v}(\mathbf{z}^{\alpha})}\Omega(\mathbf{z}^{\alpha})\notag \\
&= \Omega(\mathbf{z}^{\alpha}) \tag{$\mathbf{z}^{\alpha}\in L$ }\\
&=\mathfrak{g}_{\mathbf{z}^{\alpha}}^{*}\Omega(\mathbf{\omega}^{\alpha}) \tag{$\mathfrak{g}_{\mathbf{z}^{\alpha}}$ is symplectic on $\mathbb{S}^2\times \mathbb{S}^2$}\\
\Rightarrow  \mathcal{L}_{\mathbf{v}}\Omega_{}(\mathbf{w}^{\alpha}) = \Omega(\mathbf{w}^{\alpha}) 
\end{align}

The lemma is proved.
\end{proof}
\paragraph{Acknowledgements}
The author is indebted to Jacques F\'ejoz, Eric S\'er\'e and Ke Zhang for inspiring discussions. The author also appreciates Universit\"{a}t Augsburg and Kyoto University for the hospitality during his stay, where part of this work is carried out. This project is supported by National Natural Science Foundation of China grant \textbf{11901160}.


\end{document}